\newtheorem{theorem}{Theorem}
\newtheorem{lemma}{Lemma}[section]
\newtheorem{proposition}[lemma]{Proposition}
\newtheorem{corollary}[lemma]{Corollary}
\newtheorem*{corollary*}{Corollary}
\theoremstyle{definition}
\newtheorem{definition}[lemma]{Definition}
\theoremstyle{remark}
\newtheorem{remark}[lemma]{Remark}
\newcommand\gp[1]{\langle #1\rangle}
\newcommand\presn[2]{\langle #1\mid #2\rangle}
\newcommand\set[1]{\{ #1 \}}
\newcommand\setof[2]{\{#1 \mid #2\}}
\DeclareMathOperator{\Var}{Var}
\DeclareMathOperator{\Aut}{Aut}
\DeclareMathOperator{\Autf}{Aut^f\hspace{-2pt}}
\DeclareMathOperator{\Stab}{Stab}
\DeclareMathOperator{\Sp}{Sp}
\DeclareMathOperator{\St}{St}
\DeclareMathOperator{\SL}{SL}
\DeclareMathOperator{\GL}{GL}
\DeclareMathOperator{\Mod}{Mod}
\newcommand\Z{{\mathbb Z}}
\let\al\alpha
\let\ga\gamma
\let\de\delta
\let\ep\varepsilon
\let\la\lambda
\let\th\theta
\let\ze\zeta
\let\si\sigma
\let\om\omega
\let\Ga\Gamma
\let\Th\Theta
\let\La\Lambda
\let\eset\emptyset
\let\seq\subseteq
\def\theenumi{(\roman{enumi})}  % making items numbering as (i), (ii), ...
\def\CB{{\mathcal B}}
\def\CE{{\mathcal E}}
\def\CF{{\mathcal F}}
\def\CL{{\mathcal L}}
\def\CP{{\mathcal P}}
\def\CQ{{\mathcal Q}}
\def\CS{{\mathcal S}}
\def\CT{{\mathcal T}}
\def\CU{{\mathcal U}}
\def\CV{{\mathcal V}}
\def\MN{{\mathbb{N}}}
\def\MZ{{\mathbb{Z}}}
\let\oldmarginpar\marginpar
\renewcommand\marginpar[1]{\-\oldmarginpar[\raggedleft\footnotesize #1]%
{\raggedright\footnotesize #1}}
\newcommand{\rb}[1]{{\left( #1 \right)}}
\def\p@enumi{\thelemma}
\let\@savedlabel\label
\def\label#1{\@savedlabel{#1}\ifnum\@listdepth=1%
\protected@edef\@currentlabel{\theenumi}\@savedlabel{#1-it@m}\fi}
\title{Quadratic equations in the Grigorchuk group}
\author{Igor Lysenok}
\address{Steklov Institute of Mathematics, Gubkina str.\ 8, 119991
Moscow, Russia} \email{igor.lysenok@gmail.com}
\thanks{The first author has been partially supported by the Russian Foundation for Basic Research}
\author{Alexei Miasnikov}
\address{Department of Mathematics, Stevens Institute of Technology,
Hoboken, NJ, 07030 USA} 
\email{amiasnikov@gmail.com}
\author{Alexander Ushakov}
\address{Department of Mathematics, Stevens Institute of Technology,
Hoboken, NJ, 07030 USA} \email{sasha.ushakov@gmail.com}
\thanks{The third author has been partially supported by NSF grant DMS--0914773}
\begin{document}

\begin{abstract}
We provide an algorithm which, for a given quadratic equation in the Grigorchuk group
determines if it has a solution. As a corollary to our approach, we prove that
the group has a finite commutator width.

\emph{Keywords and phrases:}  Grigorchuck group, Diophantine problem, quadratic equations.

\emph{AMS Classification: 68W30, 20F10, 11Y16}
\end{abstract}

\maketitle

%References: \cite{Leonov:1998}

\section{Introduction}

The problem to determine if a given system of equations in an algebraic system ~$S$ has a solution
(the {\em Diophantine problem for $S$}) is hard for most algebraic systems. The reason is
that the problem is quite general and many natural specific decision problems for $S$ can be reduced to the
Diophantine problem.
For example, the word and the conjugacy problems for a group ~$G$ are very special cases of solving
equations in $G$. This generality is a natural source of motivation for studying the problem.
Furthermore, equations in $S$ can be viewed as a narrow fragment of the elementary theory of $S$.
In many cases, solving the Diophantine problem and providing a structural description of
solution sets of systems of equations is the first important step towards proving
the solvability of the whole elementary theory.
In particular, this is the case for the famous Tarski problem on the solvability of
the elementary theory of a non-abelian free group, see \cite{Kharlampovich_Myasnikov:2006}.
The positive solution of the Diophantine problem for free groups \cite{Makanin:1982} and
a deep study of properties of solution sets of systems of equations in free groups initiated in
\cite{Razborov:1987}
are at the very foundation of the known approach to the problem.

These two natural questions can be applied to any countable group $G$:
solve the Diophantine problem for $G$ and find a good structural description of solutions sets
of systems of equations in $G$.

Among the whole class of equations in a group, a subclass of {\em quadratic equations}
plays a special role.  By definition, these are equations in which every variable occurs
exactly twice. Under this restriction, equations in groups are much more treatable than in the general case,
compare for example \cite{Comerford_Edmunds:1981} and \cite{Makanin:1982}.
A reason is that natural equation transformations applied to quadratic equations
do not increase their complexity. This is related to the fact that quadratic equations in groups
have a nice geometric interpretation in terms of compact surfaces
(this may be attributed to folklore; see also \cite{Schupp:1980} or \cite{Lysenok_Myasnikov:2011}).
Although being quadratic is a rather restrictive property, it is still a wide class;
for example, the word and the conjugacy problems in a group are still special cases of quadratic equations.
It is worthwhile to mention that in many cases,
the class of quadratic equations is one of several types of ``building blocks'' for equations of a general form,
see \cite{Kharlampovich_Myasnikov:1998(2)}.

There are two classes of infinite groups where equations are well understood. The first
is finitely generated abelian groups. In this case, systems of equations are just
linear Diophantine systems over $\Z$.
The second is non-abelian free groups.
Equations in this case are more complicated but has been extensively studied.
Although there are many other classes of infinite groups where some reasonably
general results on equations are known,
at present they can be informally classified into two types: groups with a ``free-like'' behavior (e.g.\ Gromov
hyperbolic groups) or groups with ``abelian-like'' behavior (e.g.\ nilpotent groups).
(A number of deep results is known also for groups of ``mixed type''; see the monograph
~\cite{Casals-Ruiz_Kazachkov:2011}
for equations in free partially commutative groups.)

In this paper, we make an attempt to study equations in groups which belong to neither of these two types.
Namely, we take the known 3-generated Grigorchuk 2-group \cite{Grigorchuk:1980} of intermediate growth
and prove that the Diophantine problem for this
group in the special case of quadratic equations is solvable.

\begin{theorem} \label{thm:solvability}
There is an algorithm which for a given quadratic equation in the Grigorchuk group $\Ga$,
determines if it has a solution or not.
\end{theorem}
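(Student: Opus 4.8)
The plan is to reduce solvability of a quadratic equation over the Grigorchuk group $\Ga$ to a finite combinatorial search, exploiting the recursive (self-similar) structure of $\Ga$ acting on the binary rooted tree. The key observation is that $\Ga$ embeds into $\Ga\wr\Sym(2) = (\Ga\times\Ga)\rtimes\Sym(2)$ via the section maps; substituting this decomposition into a candidate solution of a quadratic equation $W(x_1,\dots,x_n)=1$ transforms it into a pair of quadratic-like equations over $\Ga$ (together with a permutation-level constraint), but over a possibly larger variable set and with coefficients that are the sections of the original coefficients.

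\medskip

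First I would set up the precise normal form for quadratic equations, using the surface interpretation mentioned in the introduction: up to an automorphism of the free group any quadratic word is equivalent to one of the standard forms $\prod [x_i,y_i]\cdot\prod z_j^{-1}c_jz_j = 1$ (orientable) or $\prod x_i^2\cdot\prod z_j^{-1}c_jz_j=1$ (non-orientable), where the $c_j$ are constants from $\Ga$. I would record the genus and the number and conjugacy-type of the coefficients as the \emph{complexity} of the equation. Next I would analyze how a solution behaves under the wreath decomposition: writing each variable value $g$ as $(g_0,g_1)\tau^{\epsilon}$ with $g_0,g_1\in\Ga$ and $\tau\in\Sym(2)$, the single equation over $\Ga$ becomes a system of two equations over $\Ga$ in the coordinates $g_0,g_1$ of the variables, plus a linear condition over $\MZ/2$ on the exponents $\epsilon$. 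The crucial point, which I would prove by a careful bookkeeping of how commutators and squares split, is that the total complexity of the resulting system does not exceed that of the original equation — this is the quadratic-analogue of the fact cited in the introduction that equation transformations do not increase complexity. The coefficients of the new system are sections of the old coefficients, and since $\Ga$ is a branch group with a finite ``nucleus'', after finitely many steps of decomposition the set of coefficients that can appear stabilizes to a finite set.

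\medskip

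Then I would turn the descent into a decision procedure. Because both the complexity is bounded and the coefficient set is eventually finite, the collection of ``types'' of equations that can arise under iterated wreath decomposition is finite; I would build a finite directed graph whose vertices are these types and whose edges record the possible ways one type descends to a (pair of) smaller-or-equal type(s) under the decomposition. Solvability of the original equation is then equivalent to the existence of a finite ``solution tree'' in this graph — a well-founded labelling consistent with the branching — whose leaves are trivially solvable instances (e.g.\ the empty equation, or an equation over the finite quotient where one can check directly). This existence is itself a decidable property of the finite graph: it can be computed by a fixed-point iteration (repeatedly marking types that are solvable because all/one of their descendants are marked), very much in the spirit of how the word problem in $\Ga$ is decided. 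I would also need the base cases: short equations, or equations all of whose coefficients lie in the stabilizer of the first level, reduce to checking solvability in the finite quotients $\Ga/\St_\Ga(n)$, which is a finite computation.

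\medskip

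The main obstacle I expect is the bookkeeping in the descent step — proving rigorously that the wreath decomposition of a quadratic equation can be re-quadratized without increasing complexity, and controlling the permutation/parity constraints coupling the two coordinate equations. In particular one must handle the interaction between the $\Sym(2)$-components of the variable values and the shape of the quadratic word (whether a commutator pair splits into two commutators or gets ``twisted'' into a single longer relation when the permutations are nontrivial), and one must make the notion of ``type'' fine enough to carry all the information needed for the finiteness of the graph yet coarse enough that finitely many types suffice. Ensuring the procedure terminates — i.e.\ that the descent is genuinely well-founded and not merely complexity-non-increasing — is the delicate heart of the argument and will likely require a secondary measure (such as coefficient length in $\Ga$) that strictly decreases, again using the contracting property of the Grigorchuk group.
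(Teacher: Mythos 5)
Your set-up matches the paper's in spirit: the wreath-product decomposition you describe is exactly the splitting homomorphism $\psi\colon \St_\Ga(1)\to\Ga\times\Ga$ used in the paper, lifted to a map $\Psi$ on equations, and the observation that coefficient length shrinks under sections (the contracting property) is precisely what the paper uses to drive the coefficients into a finite set $\CS=\set{g:|g|\le 3}$. You also correctly anticipate the need to keep track of first-level parities; the paper formalizes this (and the fact that $\varIm\psi\ne\Ga\times\Ga$) by working with constraints modulo the finite-index normal subgroup $K=\gp{abab}^\Ga$, using $K\times K\seq\psi(K)$ to make the splitting an \emph{equivalence} on constrained equations.

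The gap is in your claim that ``the total complexity of the resulting system does not exceed that of the original equation,'' and in the termination argument built on it. In fact, when the two split equations $\Psi_0(Q)$ and $\Psi_1(Q)$ share a variable (which happens precisely when some variable has odd first-level parity or some coefficient lies outside $\St_\Ga(1)$), the paper shows that joining and re-standardizing them yields a single quadratic equation whose genus is $h = 2g + \tfrac12\de(Q)-1$ in the orientable case (Proposition~\ref{pr:image-joint}). So the genus roughly \emph{doubles} at each such step; it is the coefficients, not the overall complexity, that shrink. Consequently your ``finite directed graph of types'' does not exist — once coefficients are in $\CS$, there are still infinitely many standard forms, one for each genus $g$ and each distribution of $K$-constraints — and the fixed-point iteration you propose has no finite state space to run on. You flag this as a worry at the end but the fallback measure you suggest (coefficient length) cannot rescue the finiteness of the graph, since it goes to a fixed finite set rather than to zero, after which the genus keeps growing.

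What the paper does instead at this stage is genuinely different and is the heart of the proof. Once coefficients lie in $\CS$, a standard constrained equation is encoded as a vector $\la_{Q,\ga}\in\MN^\CB$ over a \emph{fixed finite} index set $\CB$, recording how many commutator factors $[x_i,y_i]$ carry each pair of $K$-constraints and how many conjugated-coefficient factors carry each pair $(\text{constraint},\text{coefficient})$. The set $\CP$ of codes of solvable equations is then shown to be closed under adding a fixed set $\CL$ of generators (Lemma~\ref{le:P-is-cone}); this step essentially inserts blocks like $[x,y]^{r}$ with $r$ the order of $[\hat g,\hat h]$, and so it depends crucially on $\Ga$ being a torsion group. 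Closure under $\CL$ plus a Dickson-type lemma (Lemma~\ref{le:convex_union}) shows $\CP$ is a finite union of translated cones, hence recursive. You would need some replacement for this finiteness mechanism; without it, your descent is not well-founded in the relevant sense and the algorithm does not terminate.
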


A notable feature of the Grigorchuk group $\Ga$ is its self-similarity in the sense
that $\Ga$ is commensurable with its nontrivial direct power.
More precisely, there is a
``splitting'' homomorphism ~$\psi$ of a subgroup $St_\Ga(1)$ of $\Ga$ of index 2 to
the direct product $\Ga\times\Ga$ of two copies of $\Ga$ such that the image of $\psi$
has index 8 in $\Ga\times\Ga$ (see \cite[Chapter VIII, Theorem ~28]{Harpe}).
There are two important properties of $\psi$ which give rise to a number of remarkable facts
about ~$\Ga$. The first property is that
each component $\psi_i: St_\Ga(1) \to \Ga$ of $\psi=(\psi_0,\psi_1)$
is a contracting map with respect to the word length on $\Ga$ defined for a canonical set of generators for $\Ga$.
This provides an effective solution of the word problem for $\Ga$ and is a key assertion
in the proof that $\Ga$ is a 2-group. The second property is a stronger version of the first one:
the splitting homomorphism $\psi$ itself is a contracting map with respect to a certain
length function defined on $\Ga$. A corollary is that the growth function of $\Ga$
is neither polynomial nor exponential.

Our proof of Theorem \ref{thm:solvability} is based essentially on the stronger version
of the contracting property of the splitting homomorphism $\psi$.
We use also the fact that
$\Ga$ is a torsion group though we think that this is not essential.
We hope that the theorem could be generalized to a wider class of groups of a self-similar nature
(though, of course, much technical work for this generalization has to be done).

Our main technical tool is defining a special splitting map $\Psi$ on equations in $\Ga$ which simulates
application of
the homomorphism $\psi$ when arbitrary values of variables are substituted into the equation.
It is not hard to see that for a quadratic equation, application of $\Psi$ produces two equations
which are also quadratic.
Because $\psi$ is contracting, the coefficients of new equations are shorter
than the coefficients of the original one. Although the complexity of the non-coefficient part of
the equation may increase, this is sufficient to apply an induction.

We apply our technique to prove another non-trivial property
of $\Ga$:

\begin{theorem} \label{thm:bounded-width}
There is a number $N$ such that any element of $\Ga$ belonging to the
commutator subgroup $[\Ga,\Ga]$ is a product of at most $N$ commutators in $\Ga$.
\end{theorem}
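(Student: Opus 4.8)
The plan is to reduce Theorem~\ref{thm:bounded-width} to the solvability of a single family of quadratic equations and then to feed those equations into the splitting machinery built for Theorem~\ref{thm:solvability}. For $n\ge 1$ let $Q_n$ be the orientable quadratic equation
\[
[x_1,y_1][x_2,y_2]\cdots[x_n,y_n]=z
\]
with coefficient $z$. An element $z\in\Ga$ is a product of at most $n$ commutators precisely when $Q_n$ has a solution, and any solution forces $z\in[\Ga,\Ga]$; so the theorem is equivalent to finding a single $N$ for which $Q_N$ is solvable for every coefficient $z\in[\Ga,\Ga]$. As a preliminary I would record the elementary fact, used only at the very end, that the commutator width of $\Ga$ grows at most linearly in the word length: since $[\Ga,\Ga]$ has finite index in $\Ga$, writing an element of length $\ell$ as a product of $\ell$ generators and inserting bounded-length coset representatives between consecutive prefixes expresses it as a product of $\ell$ elements of $[\Ga,\Ga]$ of bounded length, so its commutator width is at most $N_0\,\ell$ for an absolute constant $N_0$.

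Next I would run the splitting map $\Psi$ recursively on $Q_N$, exactly as in the proof of Theorem~\ref{thm:solvability}, tracking two parameters of the orientable quadratic equations produced: the genus $\ga$ (the number of commutators) and the number $m$ of coefficients. Geometrically, a step of $\Psi$ passes to the double cover of the underlying surface determined by the composite $\pi_1\to\Ga/St_\Ga(1)\cong\Z/2$ of a hypothetical solution. If all coefficients lie in $St_\Ga(1)$ one may take the cover disconnected, and $\Psi$ returns two equations of the same genus $\ga$ and the same number $m$ of coefficients; otherwise the cover is connected and $\Psi$ returns one equation whose genus is essentially doubled and whose number of coefficients is at most doubled. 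In every case the genus does not decrease, the coefficient lengths strictly decrease while they exceed a fixed threshold $D$ (this is where the contraction of $\psi$ enters), and an Euler-characteristic computation shows that the ratio $\ga/m$ drops by at most $O(1/m)$ at those steps where it drops at all — steps at which $m$ at least doubles — so it stays bounded below by $N-1$ throughout. Since the recursion terminates, each leaf is an orientable quadratic equation of genus $\ga\ge(N-1)m$ with $m$ coefficients of length $\le D$, whose solvability reduces (set all variables to $1$) to the product of its coefficients being a product of $\ga$ commutators. By the linear bound this holds as soon as $(N-1)m\ge N_0 D m$, i.e.\ as soon as $N\ge N_0 D+1$. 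Taking $N=N_0D+1$, every leaf, and hence $Q_N$, is solvable for all $z\in[\Ga,\Ga]$, which proves the theorem.

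I expect the main obstacle to be making the second paragraph rigorous, namely implementing $\Psi$ correctly while keeping the bookkeeping intact. The map $\Psi$ is not literally $z\mapsto(\psi_0(z),\psi_1(z))$: the image of $\psi$ has index $8$ in $\Ga\times\Ga$, and a coefficient may leave $St_\Ga(1)$ after a splitting (for instance $\psi\bigl((ba)^2\bigr)=(ac,ca)$ with $ac\notin St_\Ga(1)$), so one must carry the bounded coset data and absorb the offending generator into the variables; the delicate point is to check that these adjustments affect $\ga$ and $m$ only in the ways claimed — in particular that no splitting can lower the genus or inflate the number of coefficients faster than the genus grows. Once this is settled, the rest is the surface dictionary for quadratic equations together with the contracting property of $\psi$ already established for Theorem~\ref{thm:solvability}; as there, one may also use that $\Ga$ is a torsion group to simplify parts of the argument, though it is likely not essential.
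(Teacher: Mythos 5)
Your approach is genuinely different from the paper's, and would give an explicit bound on $N$ if it worked, whereas the paper's proof is a pure existence argument. The paper proceeds via a stability result (Theorem~\ref{thm:equation-stability}): for $n,n'\ge N$ and similar constraint data, $R_n S=1$ and $R_{n'}S=1$ are simultaneously solvable or unsolvable; since $R_n=g$ is trivially solvable for $n$ large enough (depending on $g$), one concludes $R_N=g$ is solvable. The stability result rests on reduced constraints and constraint saturation (Corollary~\ref{co:reducing-constraints-K} and the stabilization of the sets $\Theta_n$), together with the Dickson's-lemma cone description of the set of solvable codes (Lemma~\ref{le:convex_union}, Proposition~\ref{pr:solvable-description}), applied in Lemma~\ref{le:reduction-base}. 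None of this appears in your sketch.

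Your genus/coefficient bookkeeping is essentially right: genus never decreases, genus doubles exactly at the non-disjoint steps where the coefficient count doubles, so $g/m$ stays above $N-1$ along any root-to-leaf path. The real gap is the leaf substitution. The splitting machinery necessarily works with \emph{constrained} equations modulo $K$ --- the disjunction over $\ze\in\CV_{W,\ga}$ is forced because $\psi$ is not onto $\Ga\times\Ga$ --- and the original equation is solvable iff some choice of constraints along the whole tree makes every leaf in the resulting system solvable. At a leaf, setting $z_i=1$ is only admissible when the constraint on $z_i$ is trivial, and the commutator entries $x_i,y_i$ must lie in prescribed $K$-cosets. Your preliminary linear bound $N_0\ell$ on commutator width is unconstrained; what the leaf actually requires is a constrained version --- that any short element of $[\Ga,\Ga]$ is a product of $g$ commutators whose entries lie in a given compatible assignment of $K$-cosets. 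That does not follow from the coset-representative trick, and it is essentially the theorem itself restricted to short coefficients: it is precisely what the paper's saturation/cone machinery is built to deliver. Your final paragraph flags that one must ``carry the bounded coset data,'' but this is not bookkeeping; it is the heart of the problem, and as written the argument is circular at the leaves.
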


It is well-known that two quadratic words $x^2 y^2 z^2$ and $x^2 [y, z]$ are equivalent
up to a substitution of variables induced by an automorphism of the free group $F(x,y,z)$.
This implies equivalence $x_1^2 x_2^2 \dots x_{2n+1}^2 \sim x_1^2 [x_2,x_3] \dots [x_{2n},x_{2n+1}]$
and we have the following immediate consequence.

\begin{corollary*}
There is a number $N$ such that any element of $\Ga$ belonging to the
verbal subgroup generated by squares is a product of at most $N$ squares in $\Ga$.
\end{corollary*}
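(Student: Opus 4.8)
The plan is to derive the Corollary directly from Theorem~\ref{thm:bounded-width} together with two elementary facts: one about the Grigorchuk group, and one about identities in free groups. First I would record that the abelianization $\Ga/[\Ga,\Ga]$ has exponent $2$ (indeed it is $(\Z/2\Z)^3$). Consequently every square $g^2$ already lies in $[\Ga,\Ga]$, so the verbal subgroup $\Ga^2$ generated by squares satisfies $\Ga^2\seq[\Ga,\Ga]$. (The reverse inclusion also holds, by the identity recalled below, so in fact $\Ga^2=[\Ga,\Ga]$; but only $\Ga^2\seq[\Ga,\Ga]$ is needed here.) Thus it suffices to bound, by a universal constant, the number of squares needed to express an arbitrary element of $[\Ga,\Ga]$.

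Next I would invoke the free-group identity already recalled in the text: in $F(x_1,\dots,x_{2n+1})$ the word $x_1^2x_2^2\cdots x_{2n+1}^2$ is carried to $x_1^2[x_2,x_3]\cdots[x_{2n},x_{2n+1}]$ by an automorphism of the free group. (Equivalently, and more crudely, a single commutator is a product of three squares, since $[y,z]=(y^{-1})^2(yz^{-1})^2z^2$.)

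Now let $g\in[\Ga,\Ga]$, and let $N$ be the constant of Theorem~\ref{thm:bounded-width}, so that $g=[a_1,b_1][a_2,b_2]\cdots[a_N,b_N]$ for suitable $a_i,b_i\in\Ga$. Substituting $x_1:=1$, $x_{2i}:=a_i$, $x_{2i+1}:=b_i$ for $1\le i\le N$ into the word $x_1^2[x_2,x_3]\cdots[x_{2N},x_{2N+1}]$ gives exactly $g$; precomposing this substitution with the automorphism of $F(x_1,\dots,x_{2N+1})$ above, we see that $g$ is also the value of $x_1^2x_2^2\cdots x_{2N+1}^2$ at the corresponding images of the generators. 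Hence every element of the verbal subgroup generated by squares is a product of at most $2N+1$ squares in $\Ga$, and the Corollary holds with this new constant. (Using instead the crude identity $[y,z]=(y^{-1})^2(yz^{-1})^2z^2$ on each factor yields the slightly weaker bound $3N$.)

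There is essentially no obstacle here: all the substantive work lies in Theorem~\ref{thm:bounded-width}, and what remains is bookkeeping. The only two points requiring a (trivial) verification are that $\Ga/[\Ga,\Ga]$ has exponent $2$, so that squares indeed lie in the commutator subgroup, and that precomposing with an automorphism of a free group turns one substitution into another, so that the resulting element is genuinely a product of the stated number of squares.
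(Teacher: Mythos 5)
Your proposal is correct and takes essentially the same route as the paper: invoke Theorem~\ref{thm:bounded-width} to write $g\in[\Ga,\Ga]$ as a product of $N$ commutators, then convert to a bounded product of squares via the free-group equivalence $x_1^2\cdots x_{2n+1}^2\sim x_1^2[x_2,x_3]\cdots[x_{2n},x_{2n+1}]$ recalled in the text. You merely make explicit the (trivial but necessary) observation that $\Ga^2\subseteq[\Ga,\Ga]$ because $\Ga/[\Ga,\Ga]\cong(\Z/2\Z)^3$ has exponent $2$, which the paper leaves implicit when calling the corollary an ``immediate consequence.''
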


Note that we do not provide a bound on $N$ in Theorem \ref{thm:bounded-width}.

\section{The Grigorchuk group}

Let $\CT$ be an infinite rooted regular binary tree.
By definition, the vertex set of $\CT$ is the set $\{0,1\}^\ast$ of all finite binary words with
the empty word  $\varepsilon$ at the root.
Two words $u$ and ~$v$ are connected by an edge in $\CT$ if and only if one of them is obtained
from the other by adding one letter $x \in \{0,1\}$ at the end.
The tree $\CT$ is shown in Figure \ref{fi:bin_tree}.
\begin{figure}[h]
\centerline{\includegraphics[scale=0.6]{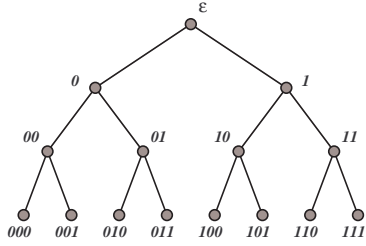}}
\caption{The infinite rooted regular binary tree $\CT$.}
\label{fi:bin_tree}
\end{figure}

By $\Aut(\CT)$ we denote the group of automorphisms of $\CT$.
Any automorphism $\al \in \Aut(\CT)$ can be viewed as a permutation on the set $\set{0,1}^*$
which preserves the length and initial segments, i.e.,\ $|\al(x)| = |x|$ for all $x$
and if $\al(xy) = x'y'$ and $|x|=|x'|$ then $\al(x) = x'$.
In particular, for every $n \ge 0$, $\al$ induces a permutation
on the set $\set{0,1}^n$ of words of length ~$n$ (the {\em $n$-th level} of $\CT$).
We denote by $\St(n)$ the stabilizer in $\Aut(\CT)$ of the set $\{0,1\}^n$.
In particular,
$$
  \St(1) = \setof{\al\in\Aut(\CT)}{ \al(0) = 0 \text{ and } \al(1) = 1}
$$
is the subgroup of $\Aut(\CT)$ of index 2.

Let $\CT_0$ and $\CT_1$ be the subtrees of $\CT$ spanned by the vertices starting with $0$ and $1$,
respectively.
By $a$ we denote the automorphism of $\CT$ which swaps $\CT_0$ and $\CT_1$:
$$
  \al(x w) = \bar x w \quad\text{for}\quad x \in \set{0,1}
$$
where $\bar x$ denotes $1-x$.

By definition, the Grigorchuk group $\Gamma$ is the  subgroup of $Aut(\CT)$ generated by four automorphisms
$a,b,c$ and $d$, where $b,c, d \in \St(1)$ are defined recursively as follows:
\begin{align*}
  b(0w) &= 0 a(w), &b(1w) &= 1c(w), \\
  c(0w) &= 0 a(w), &c(1w) &= 1d(w), \\
  d(0w) &= 0 w, &d(1w) &= 1b(w).
\end{align*}
It is easy to see that the generators $a,b,c$ and $d$ satisfy the relations
\begin{equation}\label{le:grig_relations}
    a^2=b^2=c^2=d^2=bcd=1.
\end{equation}
In particular,
$$
  \gp{a} = \set{1,a} \simeq \MZ / 2 \MZ \quad\mbox{and}\quad
  \langle b,c,d \rangle = \set{1,b,c,d} \simeq \MZ / 2 \MZ \times \MZ / 2 \MZ.
$$
Hence every element of $\Ga$ can be represented by a word of the form
\begin{equation}\label{eq:reduced-form}
[a]x_1ax_2a\ldots a x_n [a]
\end{equation}
where $x_i \in \{b,c,d\}$ and the first and the last occurrences of $a$ are optional.

Every automorphism $g \in \St(1)$ induces automorphisms $g_0$ and $g_1$ on the subtrees $\CT_0$ and ~$\CT_1$ of
$\CT$. Since $\CT_0$ and $\CT_1$ are naturally isomorphic to $\CT$ the mapping $g \mapsto (g_0,g_1)$ gives a group
isomorphism
    $$\psi: \St(1) \rightarrow \Aut(\CT) \times \Aut(\CT).$$
We denote by $\psi_i$ $(i=0,1)$ the components of $\psi$:
$$
  \psi(g) = (\psi_0(g), \ \psi_1(g)).
$$
Observe that conjugation by $a$ swaps the components of $\psi(g)$:
$$
  \psi(aga) = (\psi_1(g), \ \psi_0(g)).
$$

Let $St_\Gamma(1) = \St(1) \cap \Gamma$  be the set of automorphisms in $\Gamma$ stabilizing the
first level of ~$\CT$, i.e., stabilizing the vertices 0 and 1.
Since $b,c,d \in \St(1)$ and $a$ swaps $\CT_0$ and $\CT_1$, the subgroup
$St_\Ga(1)$ has index 2 in $\Ga$ and
a word $w$ represents an element of $St_\Gamma(1)$ if and only if
$w$ has an even number of occurrences of $a^{\pm1}$. This implies that $St_\Ga(1)$ has a generating set
$\set{b,c,d,aba,aca,ada}$.
From the definition of $b$, $c$ and $d$ we can write immediately the images under $\psi$
of the generators of $St_\Ga(1)$:
\begin{align*}
%\begin{equation}\label{eq:st_split}
\psi(b) &= (a,c), & \psi(aba) &= (c,a), \\
\psi(c) &= (a,d), & \psi(aca) &= (d,a), \\
\psi(d) &= (1,b), & \psi(ada) &= (b,1).
%\end{equation}
\end{align*}

The monomorphism
    $$\psi: St_\Gamma(1) \rightarrow \Gamma \times \Gamma$$
plays a central role in our analysis of equations in $\Ga$.
Note that computation of $\psi$ is effective (for example, we can represent
an element of $St_\Ga(1)$ by a reduced word \eqref{eq:reduced-form}
as a concatenation of generators $\{b,c,d,aba,aca,ada\}$ and
then apply the formulas above).
%\subsection{The subgroup $K$}
%
%
%The monomorphism $\psi:St_\Gamma(1) \rightarrow \Gamma \times \Gamma$ is not onto.

We will need a description of the image of $\psi$ as well as an extra technical tool,
the ``subgroup $K$ trick'' (Proposition \ref{pr:psi-via-K}) used in \cite{Rozhkov:1998}
for a solution of the conjugacy problem for $\Ga$ (see also \cite{Lysenok_Miasnikov_Ushakov:2010}).
Let $K$ be the normal closure in $\Gamma$ of the element $abab$,
    $$K = \gp{abab}^\Gamma.$$

% The Schreier coset graph  of $K$ is shown in Figure
% \ref{fi:Schreier_K}.
% \begin{figure}[h]
% \centerline{\includegraphics[scale=1]{Figures/schreier_K} }
% \caption{Schreier graph of $K \le \Gamma$ relative to generators $\{a,b,d\}$.}
%  \label{fi:Schreier_K}
% \end{figure}
% We denote the coset representatives of $K$ in $\Gamma$ by $1=g_0,g_1, \ldots, g_{15}$
% according to the numbers in Figure \ref{fi:Schreier_K}. Observe, that $K$ is a subgroup of $\St_\Gamma(1)$ of
% index 8 with coset representatives $g_0, g_1, g_4, g_5, g_8, g_9, g_{12}, g_{13}$.

\begin{lemma}\label{le:K_split_component}
The following holds:
\begin{enumerate}
\item
$K$ has index $16$ in $\Gamma$  and
the quotient group $\Ga/K$ has the presentation
$$
    \Ga/K = \presn{a,b,d}{b^2 = a^2 = d^2 = 1, \ (ab)^2 = (bd)^2 = (ad)^4 = 1}
$$
\item
$\Ga/K$ is the direct product of the cyclic group of order 2 generated by $bK$
and the dihedral group of order 8 generated by $aK$ and $dK$.
\item
$K \times K \seq \psi(K)$.
\end{enumerate}
\end{lemma}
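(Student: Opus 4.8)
I would prove the three parts in the order (1), (3), (2), with (2) dropping out of (1). For the upper bound in (1), the plan is to present the right-hand group as a quotient target. Since $bcd=1$ and $b,c,d$ are involutions we have $c=bd$, so $\Ga$ — hence $\Ga/K$ — is generated by $a,b,d$, and I would check that every defining relator of
\[
  G:=\presn{a,b,d}{a^2=b^2=d^2=1,\ (ab)^2=(bd)^2=(ad)^4=1}
\]
lies in $K$: $a^2,b^2,d^2$ are trivial already in $\Ga$; $(ab)^2=abab$ normally generates $K$ by definition; $(bd)^2=c^2=1$ in $\Ga$; and for $(ad)^4$ one computes $\psi\bigl((ad)^2\bigr)=\psi(ada)\,\psi(d)=(b,1)(1,b)=(b,b)$, whence $\psi\bigl((ad)^4\bigr)=(1,1)$ and injectivity of $\psi$ on $St_\Ga(1)$ forces $(ad)^4=1$ in $\Ga$. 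So there is a surjection $G\twoheadrightarrow\Ga/K$. A short finite computation then identifies $G$: in $G$ the element $b$ commutes with $a$ (from $(ab)^2=1=a^2=b^2$) and with $d$ (from $(bd)^2=1$), so $\gp b$ is central, $G/\gp b$ is a quotient of the dihedral group $\presn{a,d}{a^2=d^2=(ad)^4=1}$ of order $8$, and conversely $\MZ/2\MZ$ times that dihedral group satisfies all the relators of $G$; hence $|G|=16$, so $[\Ga:K]\le16$.

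For the matching lower bound $[\Ga:K]\ge16$ — the main obstacle — note that $abab=[a,b]$, so $K\seq[\Ga,\Ga]$; since $\Ga/[\Ga,\Ga]\cong(\MZ/2\MZ)^3$ (classical, and also visible from the surjection below) and $\Ga$ is a $2$-group, $[\Ga:K]\in\{8,16\}$. Now $[\Ga:K]=8$ would force $K=[\Ga,\Ga]$, i.e.\ $\Ga/K$ abelian; as $abab\in K$ and $(bd)^2=1$, abelianity is equivalent to $adad=[a,d]\in K$, so it suffices to show $adad\notin K$. I would verify this in a finite quotient: from the recursions one checks that $\Ga$ maps \emph{onto} the automorphism group of the binary tree of depth $3$, so $\Ga/St_\Ga(3)$ has order $128$; there the image of $abab$ is a particular element of order $4$ whose normal closure has order $8$, and the image of $adad$ does not lie in that normal closure, whence $adad\notin K\cdot St_\Ga(3)$ and a fortiori $adad\notin K$. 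Thus $[\Ga:K]=16$, and together with $G\twoheadrightarrow\Ga/K$ this establishes the presentation in (1). (Alternatively this index and presentation are the standard facts expressing that $\Ga$ is regular branch over $K$, recorded in the cited references.)

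For (3) the plan is a ``coordinate cancellation'' argument. All $\Ga$-conjugates of $abab$ have an even number of occurrences of $a$, so $K\seq St_\Ga(1)$ and $\psi$ restricts to a monomorphism on $K$ with $\psi(K)\lhd\psi(St_\Ga(1))$. The key step is to produce a single element of $\psi(K)$ of the form $(w,1)$ with $w$ conjugate to $abab$ in $\Ga$. We have $\psi(abab)=\psi(aba)\,\psi(b)=(c,a)(a,c)=(ca,ac)\in\psi(K)$; conjugating by $\psi(aca)=(d,a)$ (and $aca\in St_\Ga(1)$) gives $(dcad,\,ca)\in\psi(K)$; multiplying by $(ca,ac)=\psi(abab)$ cancels the second coordinate:
\[
  (dcad,\,ca)\,(ca,\,ac)=(dcadca,\,1)=(baba,\,1)\in\psi(K),
\]
since $dcadca=(dca)^2=(ba)^2=baba$, using $dc=b$ in $\gp{b,c,d}$. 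As $baba=a(abab)a^{-1}$, its normal closure in $\Ga$ is $K$. Fix $k\in K$ with $\psi(k)=(baba,1)$. The projection $\psi_0\colon St_\Ga(1)\to\Ga$ is onto (its image contains $\psi_0(b)=a$, $\psi_0(aba)=c$, $\psi_0(aca)=d$, $\psi_0(ada)=b$), so given $h\in\Ga$ pick $g\in St_\Ga(1)$ with $\psi_0(g)=h$; then $gkg^{-1}\in K$ and $\psi(gkg^{-1})=(h\,baba\,h^{-1},\,1)\in\psi(K)$. These elements generate $\gp{baba}^\Ga\times\{1\}=K\times\{1\}$, so $K\times\{1\}\seq\psi(K)$. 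Finally, for $x\in K$ write $(x,1)=\psi(k_x)$ with $k_x\in K$; then $ak_xa\in K$ and, since conjugation by $a$ swaps the components of $\psi$, $\psi(ak_xa)=(1,x)$, giving $\{1\}\times K\seq\psi(K)$. Hence $K\times K\seq\psi(K)$.

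Part (2) is then immediate: the finite computation above exhibits $\Ga/K$ as $\gp{bK}\times\gp{aK,dK}$ with $\gp{bK}\cong\MZ/2\MZ$ central and $\gp{aK,dK}$ dihedral of order $8$, which is exactly the asserted decomposition. The genuinely delicate point is the lower bound $[\Ga:K]\ge16$ in (1) — concretely, checking that the image of $adad$ escapes the normal closure of the image of $abab$ in the depth-$3$ tree automorphism group (equivalently, invoking the branch structure from the references); everything else is a short verification or the clean cancellation argument of (3).
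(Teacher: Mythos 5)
Your proposal is correct in outline and in its verifiable details, but it takes a more self-contained route than the paper, which simply cites de la Harpe (Chapter VIII, Prop.~30) for both the index $[\Ga:K]=16$ and for part (iii), and then establishes (i) by checking the defining relators and counting the order of the presented group. Your treatment of (iii) is genuinely different and is the strongest part of the proposal: the computation $\psi(abab)=(ca,ac)$, conjugating by $\psi(aca)=(d,a)$ and multiplying to cancel the second coordinate down to $(baba,1)$, and then using surjectivity of $\psi_0$ together with $a$-conjugation to sweep out $K\times\{1\}$ and $\{1\}\times K$, is a clean direct proof that the paper does not spell out. I checked each step — including $dc=b$, $baba=a(abab)a^{-1}$, and the list $\psi_0(b)=a$, $\psi_0(aba)=c$, $\psi_0(aca)=d$, $\psi_0(ada)=b$ giving $\varIm\psi_0=\Ga$ — and it all works. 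Your upper bound $[\Ga:K]\le 16$ in (1), via the surjection $G\twoheadrightarrow\Ga/K$ after verifying $(bd)^2=c^2=1$ and $\psi\bigl((ad)^2\bigr)=(b,b)$, matches the paper's strategy of checking relators and counting $|G|=16$.

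The one place you are thinner than you should be is the lower bound $[\Ga:K]\ge 16$. You correctly reduce it (via $K\le[\Ga,\Ga]$, $\Ga^{\mathrm{ab}}\cong(\MZ/2\MZ)^3$, and $\Ga$ being a $2$-group) to showing $adad\notin K$, and you propose to check this in the depth-$3$ quotient $\Ga/\St_\Ga(3)\cong\Aut(\CT_3)$ of order $128$. The surjectivity of $\Ga$ onto $\Aut(\CT_3)$ and the claimed facts that the image of $abab$ there has normal closure of order $8$ not containing the image of $adad$ are all \emph{plausible} and finitely checkable, but you do not carry out that computation, and it is exactly the content being asserted. As it stands this is an unverified step rather than a proof. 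Note that the paper avoids the issue entirely by quoting the index from the reference; if you want to keep your proof self-contained you need to actually exhibit the normal closure in $\Aut(\CT_3)$ (a short but nontrivial hand or machine check), whereas if you are content to cite the branch-group literature for $[\Ga:K]=16$, you end up with essentially the paper's proof of (i) plus a nice original proof of (iii).
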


\begin{proof}
(ii) follows from (i). (iii) is Proposition 30(v) in \cite[Chapter VIII]{Harpe}.
Proposition 30(ii) in \cite[Chapter VIII]{Harpe} says that $K$ is of index 16.
To verify the presentation for $\Ga/K$ in (i) we first check that all defining relations hold in $\Ga/K$
and then compute that the presented group is of order 16.
% For (i), it is straightforward to check that all relations in the presentation hold in $\Ga/K$
%
%     $$K = \gp{abab, \ badabada, \ abadabad}.$$
%
% It is sufficient to prove the statement for the generators $abab$, $badabada$ and $abadabad$ of $K$.
% A straightforward verification shows that
% \begin{align*}
% \psi(b\cdot ada\cdot b\cdot ada) &= (abab,1),\\
% \psi(badab \cdot aca \cdot badab \cdot aca) &= (abadabad,1),\\
% \psi(c \cdot badab \cdot aca \cdot badab \cdot aca \cdot c) &= (badabada,1).
% \end{align*}
% Another easy check shows that
% the elements in the left-hand sides belong to $K$.
\end{proof}

By $\pi_K$ we denote the natural epimorphism $\Ga \to \Ga/K$.
A straightforward consequence of Lemma ~\ref{le:K_split_component}(iii) is the following proposition.

\begin{proposition} \label{pr:psi-via-K}
There is a finite set ${\mathcal F}$ of pairs $(u,v) \in \Ga/K \times \Ga/K$ and a map
$\om: {\mathcal F} \to \Ga/K$ such that:
\begin{enumerate}
\item
A pair $(g_0,g_1) \in \Ga \times \Ga$ belongs to the image of $\psi$ if and only
$(\pi_K(g_0), \pi_K(g_1)) \in {\mathcal F}$.
\item
If $(\pi_K(g_0), \pi_K(g_1)) \in {\mathcal F}$ then for any $g \in \Ga$ with
$\psi(g) = (g_0,g_1)$, we have
$$
  \pi_K(g) = \om(\pi_K(g_0), \pi_K(g_1)).
$$
\end{enumerate}
\end{proposition}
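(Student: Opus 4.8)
The idea is to push the inclusion $K\times K\subseteq\psi(K)$ from Lemma \ref{le:K_split_component}(iii) down to the finite quotient and read off both $\CF$ and $\om$ from a finite computation. First I would set $N=\varIm\psi\seq\Ga\times\Ga$, which by Lemma \ref{le:K_split_component}(ii) (index $8$ of $\varIm\psi$ in $\Ga\times\Ga$) is a subgroup of index $8$. Since $K$ is normal in $\Ga$, the product $K\times K$ is normal in $\Ga\times\Ga$, hence normal in $N$; by part (iii) it is contained in $N$. Therefore the composite
$$
  \rho\colon N \hookrightarrow \Ga\times\Ga \twoheadrightarrow (\Ga/K)\times(\Ga/K)
$$
has kernel exactly $(K\times K)\cap N = K\times K$, and its image $\overline N := \rho(N)$ is a subgroup of $(\Ga/K)\times(\Ga/K)$ — a finite group of order $16\cdot16 / 8 = 32$ — which can be listed explicitly from the generator images $\psi(b),\psi(c),\psi(d),\psi(aba),\psi(aca),\psi(ada)$ given above, reduced mod $K$.

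Now define $\CF := \overline N$, viewed as a set of pairs $(u,v)\in\Ga/K\times\Ga/K$. For part (i): a pair $(g_0,g_1)\in\Ga\times\Ga$ lies in $\varIm\psi=N$ iff $(\pi_K(g_0),\pi_K(g_1))\in\overline N$. The forward direction is immediate. For the converse, if $(\pi_K(g_0),\pi_K(g_1))\in\overline N$, pick $(h_0,h_1)\in N$ with the same image mod $K\times K$; then $(g_0h_0^{-1},\,g_1h_1^{-1})\in K\times K\seq N$, so $(g_0,g_1)\in N$, as $N$ is a subgroup. This is exactly where Lemma \ref{le:K_split_component}(iii) is used, and it is the only nontrivial input. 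For part (ii): I want a well-defined map $\om\colon\overline N\to\Ga/K$ with $\pi_K(g)=\om(\pi_K(g_0),\pi_K(g_1))$ whenever $\psi(g)=(g_0,g_1)$. The key observation is that $\psi$ maps $St_\Ga(1)\cap K$ onto... more precisely, $\psi^{-1}(K\times K)\subseteq K$: indeed if $\psi(g)\in K\times K$ then both components lie in $K$, and one checks (again using (iii), together with the fact that $\psi$ is injective on $St_\Ga(1)$ and the orders match — $|\psi^{-1}(K\times K)| $ has index $16$ in $St_\Ga(1)$, index $32$ in $\Ga$, matching $[\Ga:K]=16$ times... ) that $\psi^{-1}(K\times K)=K\cap St_\Ga(1)$. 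Hmm — here care is needed, since $K$ might not be contained in $St_\Ga(1)$.

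Let me reorganize part (ii). Since $abab\in St_\Ga(1)$ but $K=\gp{abab}^\Ga$ need not lie in $St_\Ga(1)$, I instead argue as follows. The map $g\mapsto\pi_K(g)$ on $\psi^{-1}(\CF)$ factors through $\rho$: it suffices to show that if $\psi(g)=\psi(g')$ componentwise-mod-$K$, i.e. $\psi(g'g^{-1})\in K\times K$, then $\pi_K(g')=\pi_K(g)$, equivalently $g'g^{-1}\in K$. So the whole of part (ii) reduces to the single claim
$$
  \psi^{-1}(K\times K)\seq K,
$$
where $\psi^{-1}$ is taken inside $St_\Ga(1)$. Granting the claim, $\om$ is well-defined on $\rho(St_\Ga(1))\subseteq\overline N$ by $\om(\rho(x)):=\pi_K(\psi^{-1}(x))$ — wait, $\psi$ is defined on $St_\Ga(1)$, not all of $\Ga$, but $\varIm\psi=N$ as a subgroup of $\Ga\times\Ga$ is the image of $St_\Ga(1)$, so every $(g_0,g_1)\in N$ equals $\psi(g)$ for some $g\in St_\Ga(1)$; thus $\om$ is defined on all of $\overline N=\CF$. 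The claim $\psi^{-1}(K\times K)\seq K$ I expect to be the main obstacle: it does not follow formally from (iii), and I would prove it by a finite computation in $\Ga/K$ — namely, show that the composite $St_\Ga(1)\xrightarrow{\psi}\Ga\times\Ga\to(\Ga/K)^2$ has kernel contained in $K$, by checking that the induced map $St_\Ga(1)/(K\cap St_\Ga(1))\to(\Ga/K)^2$ is injective, which is a computation with a group of order $[\Ga:K]\cdot[St_\Ga(1):\Ga... ] = 8$ (index of $St_\Ga(1)K$ in... ) — concretely, one computes the images of the six generators $b,c,d,aba,aca,ada$ in $(\Ga/K)^2$ and verifies they generate a subgroup of the expected order. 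Once $\CF=\overline N$ and $\om$ are in hand, both clauses follow directly, and finiteness of $\CF$ is clear since $\Ga/K$ is finite.
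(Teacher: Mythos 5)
Your plan has the right shape — take $\CF = \varIm\psi \bmod K\times K$, use $K\times K\subseteq\psi(K)$ to go from membership mod $K$ to actual membership in $\varIm\psi$, and define $\om$ via $g\mapsto\pi_K(g)$ once well-definedness is checked — and this is exactly the "straightforward consequence" the paper has in mind. But you misdiagnose the one point you flag as the "main obstacle." Both worries there are unfounded.

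First, $K\subseteq\St_\Ga(1)$ is automatic: $abab$ has an even number of $a$'s, so $abab\in\St_\Ga(1)$, and $\St_\Ga(1)$ is normal in $\Ga$ (it is the kernel of the map $\Ga\to\Ga/\St_\Ga(1)\cong\Z/2\Z$), hence $K=\gp{abab}^\Ga\subseteq\St_\Ga(1)$. Second, the claim $\psi^{-1}(K\times K)\subseteq K$ \emph{does} follow formally from Lemma~\ref{le:K_split_component}(iii), once you add the fact — already in the paper's setup — that $\psi$ is injective on $\St_\Ga(1)$ (it is the restriction of the isomorphism $\St(1)\to\Aut(\CT)\times\Aut(\CT)$). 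Indeed, if $g\in\St_\Ga(1)$ and $\psi(g)\in K\times K$, then by (iii) there is $k\in K\subseteq\St_\Ga(1)$ with $\psi(k)=\psi(g)$, and injectivity forces $g=k\in K$. No finite computation in $\Ga/K$ is needed; inserting this two-line argument in place of your "I would prove it by a finite computation" completes part (ii), and with it the whole proof. (One small notational slip: $\rho$ is defined on $N=\varIm\psi\subseteq\Ga\times\Ga$, so you should write $\rho(N)$ rather than $\rho(\St_\Ga(1))$.)
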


\section{Quadratic equations} \label{se:quadratic}

\subsection{Equations in groups}
Let $G$ be a group and $X$ a countable set of {\em variables}.
An {\em equation} in $G$ is a formal equality $W=1$ where $W$
is a word $u_1 u_2 \dots u_k$ of letters $u_i \in G \cup X^{\pm1}$.
We view the left-hand side $W$ of an equation as an element of the free product $G*F_X$.
A {\em solution} of $W=1$ is a homomorphism $\al: G*F_X \to G$
which is identical on $G$ (i.e.\ $\al$ is a {\em $G$-homomorphism}) and satisfies $\al(W)=1$.
Similarly, a solution of a {\em system of equations} $\set{W_i=1}_{i \in I}$
is a $G$-homomorphism $\al: G*F_X \to G$ such that $\al(W_i)=1$ for all $i$.

For the Diophantine problem in a group $G$,
it is usually assumed that $G$ is finitely or countably
generated; in this case equations in $G$ can be represented by words in a countable alphabet
$A^{\pm1} \cup X^{\pm1}$ where $A$ is a generating set for $G$.

A word $W\in G \ast F_{X}$ and an equation $W=1$ are called {\em quadratic}
if every variable $x\in X$ occurring in $W$
occurs exactly twice (where occurrences of both $x$ and $x^{-1}$ are counted).
For a word $W \in G \ast F_{X}$ by $\Var(W) \subseteq X$ we denote the set of all variables
occurring in $W$ (again, occurrences of $x^{\pm1}$ are counted as occurrences of a variable $x$).

We denote $\Autf_G(G \ast F_{X})$ the group of {\em finitely supported $G$-automorphisms} of $G*F_X$,
i.e., automorphisms $\phi \in \Aut(G \ast F_{X})$ which are identical on $G$ and change finitely many elements
of $X$.
We say that two words $V,W \in G*F_X$ are {\em equivalent}
if there is an automorphism $\phi \in \Autf_G(G \ast F_{X})$
such that $\phi(V)$ is conjugate to ~$W$.
Clearly, if $V$ and $W$ are equivalent then
equation $V=1$ has a solution if and only if equation $W=1$ has a solution.

It is well known that every quadratic word is equivalent
to a word of one of the following forms:
\begin{equation} \label{eq:classical-standard}
  \begin{array}{c}
    \strut [x_1,y_1] [x_2,y_2] \ldots [x_g,y_g] \qquad (g \ge 0), \\[0.5ex]
    \strut [x_1,y_1] [x_2,y_2] \ldots [x_g,y_g] \, c_1 \, z_2^{-1} c_2 z_2 \dots z_{m}^{-1} c_{m} z_{m}
      \qquad (g \ge 0, \ m \ge 1), \\[0.5ex]
    x_1^2 x_2^2 \ldots x_g^2 \qquad (g > 0), \\[0.5ex]
    x_1^2 x_2^2 \ldots x_g^2 \, c_1 \, z_2^{-1} c_2 z_2 \dots z_{m}^{-1} c_{m} z_{m} \qquad (g > 0, \ m \ge 1),
  \end{array}
\end{equation}
where $x_i,y_i,z_i \in X$ are variables and $c_i \in G$
(see \cite{Comerford_Edmunds:1981} or \cite{Grigorchuk_Kurchanov:1992}).
With a slight change of these canonical forms
(introducing a new variable $z_1$, for technical convenience),
we call the following quadratic words $Q$ and the corresponding quadratic equations $Q=1$ {\em standard:}
\begin{gather*}
[x_1,y_1] [x_2,y_2] \ldots [x_g,y_g] \, z_1^{-1} c_1 z_1 \, z_2^{-1} c_2 z_2 \dots z_{m}^{-1} c_{m} z_{m}
  \qquad (g \ge 0, \ m \ge 0), \\
x_1^2 x_2^2 \ldots x_g^2 \, z_1^{-1} c_1 z_1 \, z_2^{-1} c_2 z_2 \dots z_{m}^{-1} c_{m} z_{m}
\qquad (g > 0, \ m \ge 0).
\end{gather*}

Words in the first and in the second series are called {\em standard orientable} and
{\em standard non-orientable}, respectively.
More generally, a quadratic word $Q$ (and a quadratic equation $Q=1$)
are called {\em orientable} if the two occurrences in $Q$ of each variable $x \in \Var(Q)$
have the opposite signs $x$ and $x^{-1}$ and {\em non-orientable} if there is a variable $x$
occurring in $Q$ twice with the same signs $x$ or $x^{-1}$.

The number $g$ is called the {\em genus} of a standard quadratic word $Q$.
The elements $c_1$, $\dots$, $c_m$ of $G$
occurring in $Q$ are called the {\em coefficients} of $Q$.

\begin{proposition} \label{pr:quadratic-standard}
Every quadratic word $Q$
is equivalent to a standard quadratic word $R$ which is
orientable if and and only if $Q$ is orientable. Moreover, $R$ and the equivalence automorphism
$\al \in \Autf_G(G*F_X)$ that sends $Q$ to a conjugate of $R$ can be computed effectively for a given $Q$.
\end{proposition}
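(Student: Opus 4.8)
The plan is to realize the classical normalization of quadratic words --- equivalently, the combinatorial classification of compact surfaces with boundary, cf.\ \cite{Schupp:1980,Lysenok_Myasnikov:2011} --- as an explicit terminating algorithm, recording the ``cut-and-paste'' automorphism at each step. By \cite{Comerford_Edmunds:1981,Grigorchuk_Kurchanov:1992} every quadratic word is equivalent to one of the forms \eqref{eq:classical-standard}; the only difference between those and the ``standard'' forms of this section is the extra conjugator $z_1$ on the first coefficient, and passing between the two families is realized by an explicit finitely supported $G$-automorphism together with a conjugation by $z_1$ (one conjugates the commutator-or-square prefix and the conjugators $z_i$, $i \ge 2$, by $z_1$; quadraticity survives because each introduced occurrence $z_1^{\pm 1}$ is matched by the opposite one). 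So it suffices, given a quadratic word $Q$, to produce an effective chain of equivalences
\[
  Q = Q_0 \to Q_1 \to \dots \to Q_t = R ,
\]
each step being an explicit $\phi_i \in \Autf_G(G*F_X)$ followed by a cyclic conjugation, terminating in a standard word $R$ of the same orientability type as $Q$ --- the last step being the passage from a form \eqref{eq:classical-standard} to a standard form just described. The composite equivalence automorphism $\al$ is then recovered by composing the $\phi_i$ and pushing all conjugations to the end, using that conjugating by $u$ and then applying $\phi$ equals applying $\phi$ and then conjugating by $\phi(u)$.

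The elementary moves are the standard ones. \emph{Free and cyclic reduction} --- deleting a subword $xx^{-1}$, merging adjacent coefficients, discarding a trivial coefficient --- shorten $Q$ and need no automorphism. \emph{Crosscap collection:} if some variable $x$ occurs twice with the same sign, an explicit automorphism (with a cyclic permutation) performs the surface move that brings that pair to a leading $x^2$, reversing and relocating a bounded stretch of the word; this removes the pair from, and adds nothing to, the set of same-sign pairs, and iterating pulls all of them out as $x_1^2 \dots x_k^2$, leaving an orientable tail. \emph{Handle collection:} in an orientable cyclically reduced word, two linked variable pairs occurring cyclically as $x\,A\,y\,B\,x^{-1}\,C\,y^{-1}\,D$ are collected by an explicit automorphism into a leading commutator $[x,y]$; iterating yields $[x_1,y_1]\dots[x_g,y_g]$ followed by a word with no linked pairs. \emph{Genus surgery:} in the non-orientable case, the classical identity $x^2[y,z] \sim x^2 y^2 z^2$, realized by an explicit automorphism, absorbs all collected commutators into squares. \emph{Coefficient normalization:} a word with neither same-sign nor linked pairs is, after cyclic reduction, a product of conjugates of its coefficients, and an explicit automorphism renames the conjugators to fresh distinct variables, producing the coefficient block $c_1\,z_2^{-1}c_2z_2 \dots z_m^{-1}c_mz_m$ of a form \eqref{eq:classical-standard}. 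The only moves that create $x^2$-terms, crosscap collection and genus surgery, are applied only when $Q$ is non-orientable, while handle collection leaves the orientability type unchanged; hence $R$ is standard orientable precisely when $Q$ is.

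Termination is obtained by ordering quadratic words lexicographically by a measure such as
\[
  \bigl(\, \#\{\text{same-sign pairs}\} ,\ \#\{\text{linked orientable pairs}\} ,\ |Q| \,\bigr) ,
\]
where pairs are counted only outside the part already collected --- essentially the measure of \cite{Comerford_Edmunds:1981}: reduction strictly drops $|Q|$ (and does not raise the other coordinates), crosscap collection drops the first coordinate, handle collection drops the second without raising the first, and genus surgery and coefficient normalization occur only a bounded number of times, at the end. \textbf{The main obstacle} is not any single step but carrying all of them out constructively and simultaneously: one must write down explicit automorphism formulas for the crosscap slide, for handle collection, and for the surgery $x^2[y,z]\sim x^2 y^2 z^2$; check, case by case, that quadraticity and the orientability type of $Q$ are preserved and that the measure strictly decreases --- the delicate point being that handle collection, carried out in the presence of coefficients, creates no new same-sign or linked pairs; and verify that the word remaining after all collection --- which corresponds to a planar surface with holes --- really does cyclically reduce to a product of conjugates of its coefficients, so that the final move applies. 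None of this is conceptually deep: it is exactly the content of the proofs in \cite{Comerford_Edmunds:1981,Grigorchuk_Kurchanov:1992}, and the only thing to add is the observation that each transformation there is an explicit $G$-automorphism composed with a conjugation, which is what makes the standard word $R$ and the automorphism $\al$ computable for a given $Q$.
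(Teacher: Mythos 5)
Your proposal takes essentially the same route as the paper: cite \cite{Comerford_Edmunds:1981,Grigorchuk_Kurchanov:1992} for the effective, orientability-preserving reduction to the classical forms \eqref{eq:classical-standard}, then exhibit one explicit $G$-automorphism (here, $x_i,y_i \mapsto z_1^{-1}x_iz_1,\, z_1^{-1}y_iz_1$ and $z_i \mapsto z_iz_1$ for $i\ge 2$) that sends the classical form to a conjugate by $z_1$ of the standard form. The bulk of your write-up re-derives the crosscap/handle collection and termination argument which the paper simply delegates to the references; that sketch is accurate in outline but redundant once effectiveness is taken from \cite{Comerford_Edmunds:1981,Grigorchuk_Kurchanov:1992}.
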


\begin{proof}
Due to the reduction to the classical standard form \eqref{eq:classical-standard}
(the procedure in \cite{Comerford_Edmunds:1981} or in ~\cite{Grigorchuk_Kurchanov:1992} is
effective and preserves orientability),
it is enough to prove that removal of the variable ~$z_1$
in a standard quadratic word (in our sense) leads to an equivalent quadratic word. The following $G$-automorphism
does the job:
$$
  [x_1,y_1] \ldots [x_g,y_g] \cdot z_1^{-1} c_1 z_1 \cdot\ldots\cdot z_{m}^{-1} c_{m} z_{m} \\
    \xrightarrow{\phi}
  z_1^{-1} ([x_1,y_1] \ldots [x_g,y_g] \cdot c_1 z_2^{-1} c_2 z_2 \cdot\ldots\cdot z_{m}^{-1} c_{m} z_{m}) z_1
$$
where
$
  \phi = (x_i \mapsto z_1^{-1} x_i z_1, \ y_i \mapsto z_1^{-1} y_i z_1, \ i=1,\dots,g, \
	      z_i \mapsto z_i z_1, \ i=2,\dots,m ).
$
\end{proof}

\subsection{Equations with constraints modulo a subgroup}
Let $H$ be a normal subgroup of a group ~$G$. By $\pi_H$ we denote the canonical epimorphism
$G\rightarrow G/H$.

\begin{definition}
An {\em equation in $G$ with a constraint modulo $H$} is a pair $(W=1,\ga)$ where
$W \in G\ast F_X$ and $\ga$ is a map $\Var(W) \to G/H$. A solution of such an equation is a $G$-homomorphism
$\al: G\ast F_X \to G$ satisfying
$\al(W)=1$ and $\pi_H(\al(x))=\ga(x)$ for every variable $x \in \Var(W)$.
\end{definition}

This notion naturally extends to systems of equations in $G$. A constraint modulo $H$ for a system
of equations $\set{W_i=1}$ is a map
$$
  \ga: \bigcup_i \Var(W_i) \to G/H.
$$
A {\em solution} of a constrained system $(\set{W_i=1},\gamma)$ is a $G$-homomorphism
$\al: G\ast F_X \to G$ such that
$\al(W_i)=1$ for all $i$ and $\pi_H(\al(x))=\ga(x)$ for every $x \in \bigcup_i \Var(W_i)$.

If $Y \seq X$ is a set of variables then a map $\ga: Y \to G/H$ extends naturally to a group
homomorphism $G * F_Y \to G/H$ by defining $\ga(g) = \pi_H(g)$ for $g \in G$.
We use the same notation $\ga$ for this homomorphism (implicitly identifying the two maps).
In particular, a constraint ~$\ga$ for a system of equations $\set{R_i=1}$
is identified with the induced homomorphism $G * F_Y \to G/H$ where $Y = \bigcup_i \Var(W_i)$.

Observe that existence of a solution of a system of equations $(\set{R_i=1}, \ga)$ with a constraint ~$\ga$
automatically implies that $\ga(R_i) = 1$ for all $i$.

We introduce equivalence of constrained equations in the following way.

\begin{definition}
Equations $(W=1,\ga)$ and $(V=1,\ze)$ with constraints modulo $H$ are {\em equivalent}
if $\ga$ and $\ze$ can be extended to homomorphisms $\bar\ga, \bar\ze : G * F_X \to G/H$
so that for some $G$-automorphism $\phi \in \Autf_G(G * F_X)$, $\phi(W)$ is conjugate to $V$ and
$\bar\ze = \bar\ga \circ \phi$.
\end{definition}

The following simple observation shows that a constraint is naturally induced
by equivalence of equations.

\begin{lemma} \label{le:constraint-transformation}
Let $W$ and $V$ be equivalent words in $G*F_X$. Then for any constraint $\ga: \Var(W)\to G/H$
there exists another constraint $\ze: \Var(V)\to G/H$ such that equations $(W=1,\ga)$ and $(V=1,\ze)$
are equivalent. Given $W$, $\ga$ and a $G$-automorphism $\phi \in \Autf_G(G*F_X)$
sending $W$ to a conjugate of $V$, the constraint $\ze$ can be effectively computed.
\end{lemma}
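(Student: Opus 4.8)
The plan is to read $\ze$ straight off the definition of equivalence of constrained equations. We are handed a $G$-automorphism $\phi \in \Autf_G(G\ast F_X)$ with $\phi(W)$ conjugate to $V$, together with the constraint $\ga\colon \Var(W) \to G/H$. First I would extend $\ga$ to a homomorphism $\bar\ga\colon G\ast F_X \to G/H$ in the obvious way --- $\bar\ga(g) = \pi_H(g)$ for $g \in G$, $\bar\ga(x) = \ga(x)$ for $x \in \Var(W)$, and $\bar\ga(x) = 1$ for every remaining variable (any choice works there); since $G\ast F_X$ is the free product of $G$ with the free group $F_X$, this specification determines $\bar\ga$ uniquely.

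Then I would simply set $\bar\ze := \bar\ga \circ \phi$ and let $\ze$ be the restriction of $\bar\ze$ to $\Var(V)$. Because $\phi$ is a $G$-automorphism it is the identity on $G$, so $\bar\ze(g) = \bar\ga(g) = \pi_H(g)$ for all $g \in G$; hence $\bar\ze$ is a genuine homomorphic extension of the constraint $\ze\colon \Var(V) \to G/H$. By construction $\bar\ze = \bar\ga \circ \phi$, and by hypothesis $\phi(W)$ is conjugate to $V$, so both clauses of the definition of equivalence of constrained equations are satisfied, and $(W=1,\ga)$ is equivalent to $(V=1,\ze)$.

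For effectiveness, note that $\phi$ has finite support, so $\Var(V)$ is finite and each word $\phi(y)$, $y \in \Var(V)$, can be written out explicitly. Evaluating $\bar\ga$ on these finitely many words --- which only means looking up $\ga$ on the variables of $\Var(W)$ that occur, applying $\pi_H$ to the group letters, and multiplying in $G/H$ --- computes $\ze$. This requires only that arithmetic in $G/H$ be effective, which is the case in every application we need (for $G = \Ga$ and $H = K$, the group $\Ga/K$ is finite of order $16$ by Lemma~\ref{le:K_split_component}). I do not expect any real obstacle here: the statement is pure bookkeeping dictated by the definitions, and the single point that needs a second's attention is that $\bar\ze = \bar\ga\circ\phi$ is again the identity on $G$ --- which is precisely where the hypothesis that $\phi$ fixes $G$ pointwise is used.
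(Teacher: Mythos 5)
Your argument is correct and is essentially the paper's own proof: extend $\ga$ arbitrarily to $\bar\ga\colon G\ast F_X\to G/H$, set $\bar\ze=\bar\ga\circ\phi$, restrict to the variables of $V$, and use finite support of $\phi$ for effectiveness. You simply spell out a few steps the paper leaves implicit (why $\bar\ze$ restricts to $\pi_H$ on $G$, what the computation looks like concretely).
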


\begin{proof}
To compute $\ze$, we first extend $\ga$ to a homomorphism $\bar\ga: G*F_X \to G/H$ in an arbitrary
way, then take $\bar\ze = \bar\ga \circ \phi$ and compute $\ze$ by restricting $\bar\ze$ to $F_X$.
Since $\phi$ is finitely supported, the procedure is effective.
\end{proof}

As an immediate consequence of the lemma and Proposition \ref{pr:quadratic-standard} we get
\begin{corollary} \label{co:constrained-standard}
For any quadratic equation $(Q=1,\ga)$ with a constraint modulo $H$ there is an equivalent
equation $(S=1,\ze)$ where $S$ is a standard quadratic word equivalent to $Q$.
\end{corollary}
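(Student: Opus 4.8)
The statement to prove is Corollary~\ref{co:constrained-standard}, which asserts that any quadratic equation with a constraint modulo $H$ is equivalent to one whose underlying word is a standard quadratic word. Here is how I would argue.

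\medskip

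\emph{Plan.} The proof is a direct chaining of two results already in hand: Proposition~\ref{pr:quadratic-standard}, which produces a standard quadratic word $S$ equivalent to $Q$ together with an explicit witnessing automorphism $\al \in \Autf_G(G*F_X)$ sending $Q$ to a conjugate of $S$; and Lemma~\ref{le:constraint-transformation}, which says that along any such equivalence of underlying words, a constraint on $\Var(Q)$ transports to a constraint $\ze$ on $\Var(S)$ so that $(Q=1,\ga)$ and $(S=1,\ze)$ become equivalent as constrained equations. So the entire content of the corollary is the observation that the notion of equivalence for constrained equations was \emph{designed} to be compatible with the notion of equivalence for plain words: the automorphism $\al$ doing the work in Proposition~\ref{pr:quadratic-standard} is exactly the $\phi$ that Lemma~\ref{le:constraint-transformation} consumes.

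\medskip

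\emph{Steps.} First I would apply Proposition~\ref{pr:quadratic-standard} to the quadratic word $Q$: this yields a standard quadratic word $S$, which is orientable precisely when $Q$ is, and a finitely supported $G$-automorphism $\al \in \Autf_G(G*F_X)$ with $\al(Q)$ conjugate to $S$. Second, I would feed $W := Q$, $V := S$, the constraint $\ga : \Var(Q) \to G/H$, and $\phi := \al$ into Lemma~\ref{le:constraint-transformation}; its hypothesis (that $W$ and $V$ are equivalent, witnessed by $\phi$) is met, so it returns a constraint $\ze : \Var(S) \to G/H$ such that $(Q=1,\ga)$ and $(S=1,\ze)$ are equivalent constrained equations. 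That is the desired conclusion. If one also wants effectivity (not literally demanded in the statement, but implicit from the ``Moreover'' clauses of the cited results), note that $S$ and $\al$ are computable from $Q$ by Proposition~\ref{pr:quadratic-standard}, and $\ze$ is then computable from $\ga$ and $\al$ by Lemma~\ref{le:constraint-transformation}, since $\al$ is finitely supported.

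\medskip

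\emph{Main obstacle.} There is essentially no obstacle; the only thing to be a little careful about is the bookkeeping of extensions of constraints to homomorphisms $G*F_X \to G/H$. The definition of equivalence of constrained equations requires choosing extensions $\bar\ga,\bar\ze$ of $\ga,\ze$ with $\bar\ze = \bar\ga\circ\phi$, and Lemma~\ref{le:constraint-transformation}'s proof already performs exactly this choice (extend $\ga$ arbitrarily, set $\bar\ze = \bar\ga\circ\phi$, restrict), so one just has to cite it rather than redo it. No further argument is needed.
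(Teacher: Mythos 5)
Your proof is correct and matches the paper exactly: the paper presents the corollary as an immediate consequence of Proposition~\ref{pr:quadratic-standard} (producing the standard form $S$ and the automorphism $\al$) and Lemma~\ref{le:constraint-transformation} (transporting $\ga$ to $\ze$ along $\al$), which is precisely the chaining you carry out. You have merely spelled out what the paper leaves as a one-line citation.
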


Assume that $W_1$ and $W_2$ are two words in $G \ast F_X$ and there is a variable $x \in X$
which occurs in each $W_i$ exactly once. Let
$$
  W_i = U_i x^{\ep_i} V_i \quad\text{where $\ep_i =\pm1$}.
$$
We can express ~$x$ in ~$W_2$ as $x = (V_2 U_2)^{-\ep_2}$ and then substitute the expression in $W_1$
obtaining a new word denoted $W_1 \#_x W_2$ in which $x$ no longer occurs:
$$
  W_1 \#_x W_2 = U_1 (V_2 U_2)^{-\ep_1 \ep_2} V_1.
$$
Sometimes we simply write $W_1 \# W_2$ if the choice of $x$ is irrelevant
(see also Remark \ref{rm:join-words-invariance}).
It is obvious that a system $\set{W_1=1, W_2=1}$ is solvable in $G$
if and only if a single equation $W_1 \#_x W_2  = 1$ is solvable in $G$. We will need a similar statement
for the case of equations with constraints.

\begin{lemma} \label{le:join-correctness}
Let $W_1,W_2 \in G \ast F_X$ and assume that a variable $x \in X$ occurs in each ~$W_i$ exactly once.
Let $(\set{W_1=1, W_2=1}, \ga)$ be a system of equations in $G$ with a constraint $\ga$ modulo ~$H$
and $\ga(W_i) = 1$ for $i=1,2$.
Then this system has a solution if and only if the equation $(W_1 \#_x W_2 = 1,\ga')$
has a solution where $\ga'$ is the restriction of $\ga$ on $\Var(W_1 \#_x W_2)$.
\end{lemma}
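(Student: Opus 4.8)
The plan is to prove both implications by exhibiting explicit solutions, copying the unconstrained statement and then verifying the constraint modulo $H$ by hand. Throughout, write $W_i = U_i x^{\ep_i} V_i$ with $\ep_i = \pm 1$ as in the definition of $\#_x$, so that $W_1 \#_x W_2 = U_1 (V_2 U_2)^{-\ep_1\ep_2} V_1$. Observe first that, since $x$ occurs exactly once in each $W_i$, it occurs in both, while every other variable of $W_i$ survives in $U_iV_i$; hence $\Var(W_1\#_xW_2) = \bigl(\Var(W_1)\cup\Var(W_2)\bigr)\setminus\set{x}$, and $\ga'$ is literally the restriction of $\ga$ to this set.

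For the forward direction I would take a solution $\al$ of $(\set{W_1=1,W_2=1},\ga)$ and reuse it unchanged. From $\al(W_2)=1$ one solves $\al(x) = (\al(V_2U_2))^{-\ep_2}$, and substituting this into $\al(W_1)=1$ rewrites it as $\al(U_1)(\al(V_2U_2))^{-\ep_1\ep_2}\al(V_1)=1$, i.e.\ $\al(W_1\#_xW_2)=1$; the constraint identities $\pi_H(\al(y))=\ga(y)=\ga'(y)$ for $y\in\Var(W_1\#_xW_2)$ are inherited verbatim, so $\al$ is already a solution of $(W_1\#_xW_2=1,\ga')$. For the converse I would take a solution $\beta$ of $(W_1\#_xW_2=1,\ga')$ and define a $G$-homomorphism $\beta'\colon G\ast F_X\to G$ by $\beta'(y)=\beta(y)$ for $y\neq x$ and $\beta'(x)=(\beta(V_2U_2))^{-\ep_2}$. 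A short computation gives $\beta'(W_2)=\beta(U_2)(\beta(V_2U_2))^{-1}\beta(V_2)=1$ and $\beta'(W_1)=\beta(U_1)(\beta(V_2U_2))^{-\ep_1\ep_2}\beta(V_1)=\beta(W_1\#_xW_2)=1$, while the constraint holds on every $y\neq x$ because $\beta$ already respects $\ga'$.

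The only genuinely substantive point — and the sole place the hypotheses $\ga(W_i)=1$ are needed — is verifying the constraint on the eliminated variable $x$ in the converse, i.e.\ that $\pi_H(\beta'(x))=\ga(x)$. Since $\beta$ respects $\ga'$ and all variables of $U_2,V_2$ lie in $\Var(W_1\#_xW_2)$, one gets $\pi_H(\beta'(x)) = (\ga(V_2U_2))^{-\ep_2}$, whereas the assumption $\ga(W_2)=1$ forces exactly $\ga(x)=(\ga(V_2U_2))^{-\ep_2}$; thus $\ga(W_2)=1$ is precisely the compatibility condition making the extension $\beta'$ admissible, and $\ga(W_1)=1$ is then automatic once $\beta'$ is known to be a solution. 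Apart from this matching of the constraint on $x$, the argument is pure bookkeeping, so I anticipate no real obstacle.
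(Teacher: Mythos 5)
Your proof is correct and follows the same route the paper takes: the ``only if'' direction is immediate, and for the ``if'' direction one extends a solution $\beta$ of $(W_1\#_xW_2=1,\ga')$ by setting $\beta'(x)=(\beta(V_2U_2))^{-\ep_2}$, using $\ga(W_2)=1$ to check $\pi_H(\beta'(x))=\ga(x)$. The paper's proof is just a terse two-sentence version of exactly this argument; you have simply spelled out the bookkeeping.
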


\begin{proof}
The ``only if'' part is obvious. For the ``if'' part, we use the condition $\ga(W_i) = 1$
which implies that any solution $\al'$ of the constrained equation
$(W_1 \#_x W_2 = 1,\ \ga')$ extends to a solution of
the system $\set{W_1=1, W_2=1}$ with $\pi_H(\al(x)) = \ga(x)$.
\end{proof}

\begin{remark} \label{rm:join-words-invariance}
It is easy to see that if $y$ is another variable which occurs in either $W_1$ and $W_2$ exactly
once then $W_1 \#_y W_2$ and $W_1 \#_x W_2$ are equivalent. However, we do not need this fact
and the notation $W_1 \# W_2$ means a particular choice of a variable $x$ which is clear from the
context.
\end{remark}

\section{Splitting equations} \label{se:splitting}

\subsection{Splitting words in $\Ga*F_X$}

Let $W = 1$ be an equation in $\Ga$. If we substitute the values of a solution to $W$ and apply
the splitting homomorphism $\psi$ then we get two new equalities. These equalities lead
in a natural way to a system
$
  \set{W_0=1,\  W_1=1}
$
of two equations in ~$\Ga$ formally defined below in this section.
The main idea of splitting an equation is that we get a new equivalent system which,
in a certain sense, is simpler than the initial equation. Equivalence, however, cannot be achieved
in a straightforward way. An obstruction appears because the image of ~$\St_\Ga(1)$ under $\psi$ is
a proper subgroup of $\Ga \times \Ga$ and, in general, a solution of the system $\set{W_0=1, W_1=1}$
can not be lifted to a solution of $W=1$. This is the reason why we engage equations with constraints
modulo ~$K$: since we have $\psi(K) \supset K\times K$, for constrained equations the transition
from $W=1$ to $\set{W_0=1,\  W_1=1}$ is equivalent (see Corollary \ref{co:splitting-reduction}).

Starting from this point,
we consider only equations in $\Ga$ with constraints modulo $K$
(often omitting mentioning the constraints).
Since $K$ is a subgroup of $\Ga$ of finite index,
any equation in $\Ga$ is reduced to a finite disjunction of equations with constraints modulo $K$.

On the set of words $W \in \Ga * F_X$ we define two maps $\Psi_0$ and $\Psi_1$ which simulate
application of the homomorphisms $\psi_0$ and $\psi_1$ after substituting values of the variables in $W$.
Since $\psi$ is defined on the subgroup $\St_\Ga(1)$ of $\Ga$ of index ~2,
$\Psi_0(W)$ and $\Psi_1(W)$ depend
on the predefined cosets modulo $\St_\Ga(1)$ of all values of variables occurring in ~$W$.
We observe that a constraint modulo ~$K$ determines these cosets in
a unique way.
For this reason, we formally define maps ~$\Psi_i$ with respect to a given constraint
$\ga : \Var(W) \to \Ga/K$ (though denoting them $\Psi_i$ by abuse of notations).

Given a constraint $\ga: Y \to \Ga/K$ on a set of variables $Y \seq X$, we use the notation
$\si_\ga$ for the induced group homomorphism
$$
  \si_\ga: \Ga * F_Y \to \Ga / \St_\Ga(1)
$$
into the group $\Ga / \St_\Ga(1)$ of order 2
which gives the coset mod $\St_\Ga(1)$ of every word $U \in \Ga * F_Y$.

For an element $u\in \Ga$, let $\bar u$ denote the closest element in $\St_\Ga(1)$ defined by
$$
  \bar u =
  \begin{cases}
  u & \mbox{if } u\in \St_\Gamma(1),\\
  ua & \mbox{otherwise.}\\
  \end{cases}
$$
For each variable $x\in X$ we introduce two variables $x_0$ and $x_1$ which we call
the {\em descendants} of $x$. Since we operate on a single set of variables $X$
(and the splitting procedure will be applied to an equation recursively) we may formally assume
that $X$ is partitioned into two infinite disjoint sets $X_0$ and $X_1$ and two
bijections $X \to X_0$, $X \to X_1$ are fixed which provide the descendants of $x \in X$.

Now, given a word
    $$W = u_1 u_2 \dots u_k \in \Ga \ast F_X, \quad u_i \in \Ga \cup X^{\pm1},$$
and a constraint $\ga: \Var(W) \to \Ga/K$ we define a word:
    $$\Psi_0(W) = v_1 v_2 \dots v_k \in \Ga*F_X,$$
where for $u_i\in \Ga$,
$$
  v_i =
\begin{cases}
 \psi_0(\bar u_i) & \text{if } \si_\ga(u_1\dots u_{i-1}) =1 \\
 \psi_0(a \bar u_i a) & \text{if } \si_\ga(u_1\dots u_{i-1}) \ne 1\\
\end{cases}
$$
and for $u_i = x^\ep \in X^{\pm1}$,
$$
  v_i =
\begin{cases}
 x_0 & \text{if } \si_\ga(u_1\dots u_{i-1}) =1 \\
 x_1& \text{if } \si_\ga(u_1\dots u_{i-1}) \ne 1\\
\end{cases}
\quad \text{for } \ep=1, \qquad
  v_i =
\begin{cases}
 x_0^{-1} & \text{if } \si_\ga(u_1\dots u_{i}) =1 \\
 x_1^{-1} & \text{if } \si_\ga(u_1\dots u_{i}) \ne 1\\
\end{cases}
\quad \text{for } \ep=-1.
$$
Similarly one defines $\Psi_1(W)$ by taking $\psi_1$ instead of $\psi_0$ in the definition of $v_i$
for $u_i\in \Ga$
and interchanging $x_0$ and $x_1$ in the definition of $v_i$ for $u_i \in X^{\pm1}$.
We denote also
    $$\Psi(W) = (\Psi_0(W),\: \Psi_1(W)).$$

Note that in the definition of $\Psi_i(W)$ we do not assume that $\si_\ga(W) = 1$
(i.e.\ that $W$ defines an element in $\St_\Ga(1)$ after substituting values for
all variables) and thus $\Psi_i(W)$ is defined for {\em any} word
$W \in \Ga*F_X$. In particular, we have a function $\Psi:\Ga * F(X) \rightarrow (\Ga * F(X)) \times (\Ga * F(X))$.
Note also that $\Psi_i(W) = \Psi_i(Wa)$ for any $W$,
which can be seen directly from the definition.

Let $W\in\Ga\ast F(X)$ and $\ga:\Var(W)\to\Ga/K$ be a constraint on $\Var(W)$
(remember that $\Psi(W)$ is formally defined with respect to a given $\ga$).
For any $\Ga$-homomorphism $\al: \Ga * F_{\Var(W)} \to \Ga$
 we can define the induced map
$\al_*: \Ga * F_{\Var(\Psi_0(W)) \cup \Var(\Psi_1(W))} \to \Ga$ by
$$
  \al_*(x_i) =  \psi_i (\overline{\al(x)}) \qquad \text{for $x \in \Var(W)$ and $i=0,1$}.
$$
The next proposition follows from the construction by induction on the length of $W$.

\begin{proposition}[The main property of $\Psi$] \label{pr:splitting-correct}
For any $\Ga$-homomorphism $\al: \Ga * F_{\Var(W)} \to \Ga$ satisfying the constraint ~$\ga$
(that is, $\pi_K(\al(x)) = \ga(x)$ for any $x \in \Var(W)$)
we have $\psi_i(\overline{\al(W)}) = \al_*(\Psi_i(W))$
($i=0,1$).
\qed
\end{proposition}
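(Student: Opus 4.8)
The plan is to prove the identity $\psi_i(\overline{\al(W)}) = \al_*(\Psi_i(W))$ by induction on the length $k$ of the word $W = u_1 u_2 \dots u_k$, treating both values $i=0$ and $i=1$ simultaneously (the case $i=1$ is entirely symmetric, obtained by swapping $\psi_0\leftrightarrow\psi_1$ and $x_0\leftrightarrow x_1$, so I would write out the argument for $i=0$ only). The base case $k=0$ (the empty word) is immediate since $\al(W)=1\in\St_\Ga(1)$, $\overline{\al(W)}=1$, $\Psi_0(W)$ is empty, and both sides equal $1$. For the inductive step, I would not peel off a single letter and re-split the prefix; instead the cleanest route is to prove the following slightly more general bookkeeping lemma and then specialize: for every prefix $W' = u_1\dots u_j$ and its complementary suffix, track the coset $s_j := \si_\ga(u_1\dots u_j)\in\Ga/\St_\Ga(1)$, and show by induction on $j$ that
$$
  \psi_0\bigl(\overline{\al(u_1\dots u_j)}\bigr)^{\,a^{?}} = \al_*(v_1\dots v_j),
$$
where the correction by conjugation by $a$ depends on $s_j$. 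Concretely, the right invariant to carry is: $\al(u_1\dots u_j)$ lies in $\St_\Ga(1)$ iff $s_j=1$, and in either case $\psi_0$ applied to its "barred" version equals $\al_*(v_1\dots v_j)$ when $s_j=1$, while it equals $\psi_1$ of the barred version (equivalently $\psi_0$ conjugated appropriately) when $s_j\neq 1$.

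The heart of the verification is the one-letter transition, using the two algebraic facts recorded in the excerpt: $\psi(aga)=(\psi_1(g),\psi_0(g))$ for $g\in\St_\Ga(1)$, and $\Psi_i(Wa)=\Psi_i(W)$. The key identity to establish at each step is, for $u\in\St_\Ga(1)$ and $v\notin\St_\Ga(1)$,
$$
  \overline{uv} = \overline{u}\,\cdot\, a\,\overline{v}\,a \qquad\text{and similar sign/coset variants,}
$$
so that $\psi_0(\overline{uv}) = \psi_0(\overline{u})\,\psi_0(a\overline{v}a) = \psi_0(\overline{u})\,\psi_1(\overline{v})$, which matches precisely the two cases in the definition of $v_i$ for a group letter $u_i\in\Ga$ (the choice between $\psi_0(\bar u_i)$ and $\psi_0(a\bar u_ia)$ is governed by exactly $s_{i-1}=\si_\ga(u_1\dots u_{i-1})$). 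For a variable letter $u_i=x^{\ep}$, I would use that $\al$ respects the constraint, so $\si_\ga(x)$ correctly predicts the coset of $\al(x)$ modulo $\St_\Ga(1)$; for $\ep=1$ this gives $\al_*(v_i)=\psi_{s_{i-1}}(\overline{\al(x)})$, matching $\psi_0$ applied to the barred product after the appropriate conjugation, and for $\ep=-1$ one reads off the coset $s_i = \si_\ga(u_1\dots u_i)$ after the letter (this is why the definition of $v_i$ uses $u_1\dots u_i$ rather than $u_1\dots u_{i-1}$ in the $\ep=-1$ case), and uses $\psi_j(g^{-1})=\psi_j(g)^{-1}$ together with the $a$-conjugation rule. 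In every sub-case the bar operation may introduce a trailing $a$, which is exactly absorbed by the identity $\Psi_i(Wa)=\Psi_i(W)$ and by $\psi_0(ga)=\psi_0(g)$-type normalizations; keeping this absorption honest is the only real subtlety.

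The main obstacle, therefore, is purely combinatorial rather than conceptual: one must carefully maintain the invariant relating (a) whether the current prefix evaluates into $\St_\Ga(1)$, (b) which of $\psi_0,\psi_1$ (or equivalently, an $a$-conjugate) is the correct component to compare against, and (c) the off-by-one indexing in the $\ep=-1$ case — and check that each of the (finitely many) transition cases for $u_i\in\Ga$, $u_i=x$, $u_i=x^{-1}$, crossed with $s_{i-1}\in\{1,\neq 1\}$, closes up consistently. Once the transition lemma is verified letter by letter, taking $j=k$ and using $\si_\ga(W)$ — or rather, observing that $\overline{\al(W)}\in\St_\Ga(1)$ so no further correction is needed on the left — yields $\psi_0(\overline{\al(W)})=\al_*(\Psi_0(W))$, and the symmetric argument gives the $i=1$ statement, completing the induction. $\qed$
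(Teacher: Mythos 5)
The paper's own ``proof'' is a single sentence (``follows from the construction by induction on the length of $W$''), so your plan --- induction on the letters of $W$ using the identity $\psi(aga) = (\psi_1(g),\psi_0(g))$, the bar operation $\overline{gh} = \bar g\,\bar h$ (for $g\in\St_\Ga(1)$) or $\overline{gh} = \bar g\,a\bar h a$ (for $g\notin\St_\Ga(1)$), and the fact that the constraint mod $K$ determines the coset mod $\St_\Ga(1)$ --- is exactly the intended argument, and you have correctly identified all of the key ingredients, including the off-by-one indexing that makes the $\ep=-1$ case of the definition of $v_i$ work out.

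However, the \emph{invariant} you propose to carry through the induction is wrong. You claim one should prove $\psi_0\bigl(\overline{\al(u_1\dots u_j)}\bigr)^{a^{?}} = \al_*(v_1\dots v_j)$ where ``the correction by conjugation by $a$ depends on $s_j$'', and in the next sentence that this ``equals $\psi_1$ of the barred version \dots\ when $s_j\ne 1$.'' Neither is correct, and the induction as stated would not close. For example take $W = x$ with $\si_\ga(x)\ne 1$: then $s_1\ne 1$, $v_1 = x_0$, $\al_*(v_1) = \psi_0(\overline{\al(x)})$, but $\psi_1(\overline{\al(x)})$ is in general a different element, so the proposed ``$\psi_1$ of the barred version'' version fails; and inserting an $a$-conjugation inside the argument similarly produces $\psi_1(\overline{\al(x)})$. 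The invariant is in fact \emph{unconditional}: for every prefix one has simply $\psi_i\bigl(\overline{\al(u_1\dots u_j)}\bigr) = \al_*(v_1\dots v_j)$, with no correction by $a$ and no case split in the statement. The $a$-correction lives entirely \emph{inside the definition of $\Psi_i$} (in the alternative $\psi_0(\bar u_i)$ versus $\psi_0(a\bar u_i a)$, and in the choice of descendant $x_0$ versus $x_1$), precisely so that the invariant stays uniform; it should not appear again in the statement of the induction hypothesis. Once you replace your invariant by the unconditional one, the one-letter transition you sketched (splitting on whether $g_{j-1}\in\St_\Ga(1)$, i.e.\ whether $s_{j-1}=1$, and using $\overline{g_{j-1}h} = \overline{g_{j-1}}\cdot\bar h$ or $\overline{g_{j-1}}\cdot a\bar h a$) closes the induction in every one of the six sub-cases, and the argument goes through.
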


We are in position to define splitting of an equation in $\Ga$ with a constraint modulo ~$K$.
Since the images $\psi_i(gK)$ of a coset $gK$ do not belong to a unique coset modulo $K$,
a constraint modulo $K$ generates a family of constraints under splitting.
To define this family, we use a notation $\bar g$ for an element $g \in \Ga/K$ which plays
the role of ``the closest element in the stabilizer $\St_1(\Ga)$'' (similar to the case of
notation $\bar g$ for $g \in \Ga$):
$$
  \bar g =
  \begin{cases}
    g &\text{if}\ g \in \St_1(\Ga)/K, \\
    g \, \pi_K(a) &\text{otherwise,}
  \end{cases}
$$
where $\pi_K(a)$ denotes the natural image of $a$ in $\Ga/K$.

\begin{definition} \label{de:splitting-constraint}
Given a word $W \in \Ga*F_X$ and a map $\ga: \Var(W)\to \Ga/K$,
we define a set $\CV_{W,\ga}$ of maps $\ze : \Var(\Psi_0(W)) \cup \Var(\Psi_1(W)) \to \Ga/K$:
\begin{equation}\label{eq:S_gamma}
  \CV_{W,\ga} = \setof{\ze}
  {\om(\ze(x_0),\ze(x_1)) = \overline{\ga(x)}
    \text{ for all $x \in \Var(W)$}}
\end{equation}
where $\om$ is given in Proposition \ref{pr:psi-via-K}.
\end{definition}

An immediate consequence of Propositions \ref{pr:splitting-correct}
and \ref{pr:psi-via-K} is the following corollary.

\begin{corollary}[The splitting reduction] \label{co:splitting-reduction}
Let $(W=1,\ga)$ be an equation in $\Ga$ and $\si_\ga(W)=1$.
Then $(W=1,\ga)$ is solvable if and only if
the system $(\set{\Psi_0(W)=1,\: \Psi_1(W)=1},\ \ze)$ is solvable for some $\ze\in \CV_{W,\ga}$.
\qed
\end{corollary}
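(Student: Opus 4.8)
The plan is to deduce the corollary by combining Proposition~\ref{pr:splitting-correct}, which records how $\Psi$ transforms a substitution, with Proposition~\ref{pr:psi-via-K}, which describes $\varIm\psi$ and the map $\om$, proving the two implications separately. Two elementary observations are used throughout. First, $abab$ has an even number of occurrences of $a$, so $abab\in\St_\Ga(1)$ and hence $K\seq\St_\Ga(1)$; consequently $\pi_K(\bar u)=\overline{\pi_K(u)}$ for $u\in\Ga$, and for any $\Ga$-homomorphism $\al$ satisfying the constraint $\ga$ the homomorphism $\si_\ga$ reads off exactly the $\St_\Ga(1)$-coset of $\al(U)$ for $U\in\Ga\ast F_X$ (this is what makes Proposition~\ref{pr:splitting-correct} applicable). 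Second, $\psi$ is injective on $\St_\Ga(1)$.

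For the ``only if'' direction I would start from a solution $\al$ of $(W=1,\ga)$. Since $\si_\ga(W)=1$ we get $\al(W)\in\St_\Ga(1)$, hence $\overline{\al(W)}=\al(W)=1$, and Proposition~\ref{pr:splitting-correct} yields $\al_*(\Psi_i(W))=\psi_i(1)=1$ for $i=0,1$; thus $\al_*$ solves the system $\set{\Psi_0(W)=1,\Psi_1(W)=1}$. Setting $\ze(x_i)=\pi_K(\al_*(x_i))=\pi_K\bigl(\psi_i(\overline{\al(x)})\bigr)$, I would verify $\ze\in\CV_{W,\ga}$: for each $x\in\Var(W)$ the pair $\bigl(\psi_0(\overline{\al(x)}),\psi_1(\overline{\al(x)})\bigr)$ lies in $\varIm\psi$, so by Proposition~\ref{pr:psi-via-K} its $\pi_K$-image lies in $\CF$ and $\om(\ze(x_0),\ze(x_1))=\pi_K(\overline{\al(x)})=\overline{\pi_K(\al(x))}=\overline{\ga(x)}$, which is precisely the defining condition in Definition~\ref{de:splitting-constraint}.

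For the ``if'' direction, suppose $\beta$ solves $(\set{\Psi_0(W)=1,\Psi_1(W)=1},\ze)$ with $\ze\in\CV_{W,\ga}$. For each $x\in\Var(W)$ the pair $(\beta(x_0),\beta(x_1))$ has $\pi_K$-image $(\ze(x_0),\ze(x_1))\in\CF$, so it lies in $\varIm\psi$ by Proposition~\ref{pr:psi-via-K}(i); let $h_x\in\St_\Ga(1)$ be the unique element with $\psi(h_x)=(\beta(x_0),\beta(x_1))$, so that $\pi_K(h_x)=\om(\ze(x_0),\ze(x_1))=\overline{\ga(x)}$ by Proposition~\ref{pr:psi-via-K}(ii). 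I would then define the $\Ga$-homomorphism $\al$ on $\Var(W)$ by $\al(x)=h_x$ when $\overline{\ga(x)}=\ga(x)$ and $\al(x)=h_x a$ otherwise. A short check (using $a^2=1$ and $K\seq\St_\Ga(1)$) gives $\pi_K(\al(x))=\ga(x)$ and $\overline{\al(x)}=h_x$ in both cases, hence $\al_*(x_i)=\psi_i(h_x)=\beta(x_i)$, so $\al_*$ and $\beta$ agree on all descendant variables. Proposition~\ref{pr:splitting-correct} then gives $\psi_i(\overline{\al(W)})=\beta(\Psi_i(W))=1$ for $i=0,1$, so $\overline{\al(W)}=1$ by injectivity of $\psi$; and $\si_\ga(W)=1$ forces $\al(W)\in\St_\Ga(1)$, whence $\al(W)=\overline{\al(W)}=1$. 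Together with $\pi_K(\al(x))=\ga(x)$ this shows $\al$ solves $(W=1,\ga)$.

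Most of the work is bookkeeping with the two ``bar'' operations and the identification of $\si_\ga$-values with $\St_\Ga(1)$-cosets. The one step requiring genuine care is the construction of the lift $\al$ in the ``if'' direction: one must choose $\al(x)$ to be $h_x$ or $h_x a$ so that simultaneously the constraint $\pi_K(\al(x))=\ga(x)$ holds and $\overline{\al(x)}$ coincides with the preimage $h_x$ prescribed by $\beta$. This is possible exactly because $\ze\in\CV_{W,\ga}$ encodes the relation $\om(\ze(x_0),\ze(x_1))=\overline{\ga(x)}$, which ultimately rests on the inclusion $K\times K\seq\psi(K)$ from Lemma~\ref{le:K_split_component}(iii) behind Proposition~\ref{pr:psi-via-K}.
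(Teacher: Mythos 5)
Your proof is correct, and it fills in exactly what the paper leaves to the reader: the paper states the corollary as "an immediate consequence" of Propositions~\ref{pr:splitting-correct} and \ref{pr:psi-via-K} with no written argument, and your proof deduces it from those same two propositions in the intended way. The one step genuinely worth spelling out — reconstructing $\al$ from $\beta$ by choosing $h_x$ or $h_x a$ so that both $\pi_K(\al(x))=\ga(x)$ and $\overline{\al(x)}=h_x$ hold, which hinges on $K\seq\St_\Ga(1)$ and the definition of $\CV_{W,\ga}$ — is handled correctly.
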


\subsection{Splitting quadratic equations} \label{ss:quadratic-splitting}
In this subsection, we apply
$\Psi$ to standard quadratic equations in $\Ga$.

It follows from the definition of $\Psi_i$ that for any $U,V \in \Ga * F_X$:
$$
    \Psi_i (U \cdot V) =
  \begin{cases}
  \Psi_i (U) \cdot \Psi_i (V) & \text{if } \si_\ga(U) = 1, \\
  \Psi_i (U) \cdot \Psi_{1-i} (V) & \text{if } \si_\ga(U) \ne 1.
  \end{cases}
$$
Hence the image of a standard quadratic word under $\Psi_i$ is factored into blocks
of the form $\Psi_i([x,y])$, $\Psi_i(x^2)$ and $\Psi_j(z^{-1} c z)$, $j=0,1$.
(Note that $\si_\ga([x,y]) = \si_\ga(x^2) = 1$.)
We write explicit expressions for these factors (we assume that commutators $[x,y]$
are written as $x^{-1}y^{-1}xy$):
\begin{align*}
  \Psi_0([x,y]) &= x_0^{-1} y_0^{-1} x_0 y_0, &
  \Psi_1([x,y]) &= x_1^{-1} y_1^{-1} x_1 y_1  && \text{if } \si_\ga(x) = \si_\ga(y)=1, \\
  \Psi_0([x,y]) &= x_1^{-1} y_1^{-1} x_1 y_0, &
  \Psi_1([x,y]) &= x_0^{-1} y_0^{-1} x_0 y_1  && \text{if } \si_\ga(x) \ne 1,\ \si_\ga(y)=1, \\
  \Psi_0([x,y]) &= x_0^{-1} y_1^{-1} x_1 y_1, &
  \Psi_1([x,y]) &= x_1^{-1} y_0^{-1} x_0 y_0  && \text{if } \si_\ga(x) = 1,\ \si_\ga(y) \ne 1, \\
  \Psi_0([x,y]) &= x_1^{-1} y_0^{-1} x_0 y_1, &
  \Psi_1([x,y]) &= x_0^{-1} y_1^{-1} x_1 y_0  && \text{if } \si_\ga(x),\si_\ga(y) \ne 1,
\end{align*}
\begin{align*}
  \Psi_0(x^2) &= x_0^2, &
  \Psi_1(x^2) &= x_1^2, && \text{if } \si_\ga(x) = 1, \\
  \Psi_0(x^2) &= x_0 x_1, &
  \Psi_1(x^2) &= x_1 x_0, && \text{if } \si_\ga(x) \ne 1,
\end{align*}
and finally,
\begin{align*}
  \Psi_0(z^{-1} c z) &= z_0^{-1} c_0 z_0, &
  \Psi_1(z^{-1} c z) &= z_1^{-1} c_1 z_1 &&  \text{if } c \in St_\Ga(1), \ \si_\ga(z)=1, \\
  \Psi_0(z^{-1} c z) &= z_0^{-1} c_0 z_1, &
  \Psi_1(z^{-1} c z) &= z_1^{-1} c_1 z_0 &&  \text{if } c \notin St_\Ga(1), \ \si_\ga(z)=1,\\
  \Psi_0(z^{-1} c z) &= z_1^{-1} c_1 z_1, &
  \Psi_1(z^{-1} c z) &= z_0^{-1} c_0 z_0 &&  \text{if } c \in St_\Ga(1), \ \si_\ga(z) \ne 1, \\
  \Psi_0(z^{-1} c z) &= z_1^{-1} c_1 z_0, &
  \Psi_1(z^{-1} c z) &= z_0^{-1} c_0 z_1 &&  \text{if } c \notin St_\Ga(1), \ \si_\ga(z) \ne 1.
\end{align*}
where
$$
  c_i = \psi_i (c), \quad i=0,1.
$$

For a standard quadratic word $Q$, denote by $C(Q)$ the set of coefficients of $Q$.

\begin{lemma}\label{le:quadratic_split}
Let $(Q=1,\gamma)$ be a standard quadratic equation in $\Ga$ %. Assume that $\si_\ga(Q)=1$
and $\Psi(Q) = (Q_0,Q_1)$. Then the following assertions are true.
\begin{enumerate}
\item
$\Var(Q_0) \cap \Var(Q_1) = \emptyset$ if and only if
$C(Q) \subseteq \St_\Ga(1)$ and either
$\si_\ga(x_i) = \si_\ga(y_i) =1$ for every commutator $[x_i,y_i]$ in the commutator part of ~$Q$
(if ~$Q$ is standard orientable) or $\si_\ga(x_i) =1$ for every square $x_i^2$ in the square part of ~$Q$
(if ~$Q$ is standard non-orientable).
    \item
If $\Var(Q_0) \cap \Var(Q_1) = \emptyset$, then both $Q_0$ and $Q_1$ are standard quadratic words
of the same genus $g$ and the same orientability as of $Q$.
Furthermore,
$$
  C(Q_i) = \{\psi_i(c) \mid c\in C(Q),\ \psi_i(c)\ne 1\}.
$$
    \item
If $x\in \Var(Q_0) \cap \Var(Q_1)$, then $Q_0\#_x Q_1$ is a quadratic word. If $Q$
is orientable then $Q_0\#_x Q_1$ is also orientable.
\end{enumerate}
\end{lemma}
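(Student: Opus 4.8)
The plan is to read off all three assertions from the explicit block formulas for $\Psi_i([x,y])$, $\Psi_i(x^2)$ and $\Psi_i(z^{-1}cz)$ listed just before the lemma, together with the basic fact that $\Psi_i(U\cdot V)$ is a concatenation of $\Psi_{j}$-images of the blocks of $Q$ (with $j$ flipping each time a block $U$ with $\si_\ga(U)\ne 1$ is passed; note $\si_\ga$ of a commutator or a square is $1$, so the coupling index changes only when we cross a coefficient block $z^{-1}cz$ with $c\notin\St_\Ga(1)$). The key bookkeeping observation is this: for a \emph{commutator} or \emph{square} block in $x$, the two descendants $x_0,x_1$ both appear in $\Psi_0(Q)$ (resp.\ $\Psi_1(Q)$) precisely when $\si_\ga(x)\ne 1$, and only one of them appears when $\si_\ga(x)=1$ — this is visible in the four commutator rows and the two square rows. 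For a \emph{coefficient} block $z^{-1}cz$, the variable $z$ is linear (occurs once in $Q$), so it never by itself forces $\Var(Q_0)\cap\Var(Q_1)\ne\emptyset$; but the coefficient $c$ itself contributes $\psi_0(c)$ to one equation and $\psi_1(c)$ to the other, and if $c\notin\St_\Ga(1)$ then passing this block toggles the index $j$ for every block to its right.

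For (i): I would argue both directions. If $C(Q)\subseteq\St_\Ga(1)$, then no index-toggling occurs across coefficient blocks, so $\Psi_0$ uses consistently index $0$ and $\Psi_1$ consistently index $1$ on all blocks; then under the stated $\si_\ga$-hypotheses each non-coefficient block in $x$ contributes only $x_0$ to $Q_0$ and only $x_1$ to $Q_1$, and each coefficient block contributes $z_0$ to $Q_0$, $z_1$ to $Q_1$, so $\Var(Q_0)\subseteq X_0$ and $\Var(Q_1)\subseteq X_1$ and the intersection is empty. Conversely, if some coefficient $c\in C(Q)$ lies outside $\St_\Ga(1)$, then the toggling means some later block is read by $\Psi_0$ with index $1$ and by $\Psi_1$ with index $0$ (or one checks the small case that $Q$ consists of that single coefficient block — then $z_0$ and $z_1$ each appear in both $Q_0$ and $Q_1$ by the second and fourth coefficient rows), producing a shared descendant; and if all coefficients are in $\St_\Ga(1)$ but some commutator $[x_i,y_i]$ has $\si_\ga(x_i)\ne 1$ (resp.\ some square $x_i^2$ has $\si_\ga(x_i)\ne 1$), then the corresponding block formula puts both $x_{i,0}$ and $x_{i,1}$ into $Q_0$ (look at rows 2–4 of the commutator table and row 2 of the square table), hence into $\Var(Q_0)\cap\Var(Q_1)$ after noting the symmetric statement for $Q_1$.

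For (ii): assuming the equivalent conditions of (i), $Q_0$ is obtained from $Q$ by replacing each commutator $[x,y]$ by $[x_0,y_0]$ (first commutator row), each square $x^2$ by $x_0^2$ (first square row), and each coefficient block $z^{-1}cz$ by $z_0^{-1}\psi_0(c)z_0$, then deleting any factor whose coefficient $\psi_0(c)$ is trivial in $\Ga$; this is visibly again a standard quadratic word of the same genus $g$, same orientability, with $C(Q_0)=\{\psi_0(c)\mid c\in C(Q),\ \psi_0(c)\ne 1\}$, and symmetrically for $Q_1$. For (iii): if $x\in\Var(Q_0)\cap\Var(Q_1)$ then $x$ occurs exactly once in $Q_0$ and exactly once in $Q_1$ (every descendant variable occurs at most once in each $Q_i$, since each of $x,y,z$ occurs at most once in each block of $Q$ and, when $\si_\ga$-conditions fail, the two copies of a descendant land in the \emph{same} $Q_i$ only in the square/commutator situation — here I should double-check that $x_0$ cannot occur twice in $Q_0$; it occurs once per block it comes from, and a fixed descendant $x_j$ comes from a single block of $Q$), so $Q_0\#_xQ_1$ is defined and every remaining variable still occurs exactly twice, i.e.\ it is quadratic; and if $Q$ is orientable then every block is orientable, each $\Psi_i$ of an orientable block is orientable (check: each commutator row above is a genuine commutator word $u^{-1}v^{-1}uw$ with the signs matching, and each coefficient row has $z_j^{-1}\cdots z_k$ with opposite signs), and $\#_x$ of two orientable words along a variable with opposite signs is orientable, so $Q_0\#_xQ_1$ is orientable.

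The main obstacle is purely combinatorial bookkeeping rather than any conceptual difficulty: one must track the index-toggling caused by non-$\St_\Ga(1)$ coefficients carefully enough to be sure that, under the hypotheses of (i), \emph{no} descendant variable is shared, and, in the negation, to locate a concrete shared descendant; and in (iii) one must verify that each descendant variable occurs at most once in each $Q_i$ so that $\#_x$ is legitimately applicable and the result is again quadratic. I expect the cleanest way to handle the toggling is to assign to each block of $Q$ its "parity" $p=\si_\ga(u_1\cdots u_{i-1})\in\{0,1\}$ (the coupling shift at that point), observe $p$ is constant as long as we stay within commutator/square blocks and flips exactly when we cross a coefficient block with $c\notin\St_\Ga(1)$, and then note that $\Var(Q_0)\cap\Var(Q_1)=\emptyset$ forces every block to have $p$ independent of whether we are computing $Q_0$ or $Q_1$ — i.e.\ no flips at all — which is exactly $C(Q)\subseteq\St_\Ga(1)$, plus the within-block condition on the $\si_\ga$-values.
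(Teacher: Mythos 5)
The paper's own ``proof'' of this lemma is literally the phrase ``Straightforward verification,'' and your proposal is exactly that verification carried out from the block tables, so the strategy matches. There is, however, one factual slip in your argument for (iii) that you should fix. You claim that ``every descendant variable occurs at most once in each $Q_i$.'' That is false: for example, when $\si_\ga(x)\neq 1$ and $\si_\ga(y)=1$, the table gives $\Psi_0([x,y]) = x_1^{-1}y_1^{-1}x_1 y_0$, in which the descendant $x_1$ occurs twice. The fact that actually does the work is narrower: a descendant occurring twice in one $Q_i$ (like $x_1$ above) occurs zero times in the other, so it never lies in $\Var(Q_0)\cap\Var(Q_1)$; and every descendant that does lie in the intersection occurs exactly once in each $Q_i$ — this one checks directly in rows 2--4 of the commutator table, row 2 of the square table, and rows 2 and 4 of the coefficient table, where in every case the shared descendants are precisely the linearly occurring ones. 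With that corrected, your count goes through: after eliminating the shared variable $x$, each remaining descendant totals exactly two occurrences across $Q_0\#_x Q_1$, giving quadraticity; and in the orientable case the two occurrences always have opposite signs (e.g.\ $y_0$ appears as $y_0$ in $Q_0$ and as $y_0^{-1}$ in $Q_1$ in row 2), giving orientability. You flagged the need to double-check this very point, which was the right instinct; the rest of (i) and (ii) is fine as written.
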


\begin{proof}
Straightforward verification.
\end{proof}

In Lemma \ref{le:split-computation} we collect all necessary computations which we will use later
to describe the standard form of the quadratic word $Q_0\# Q_1$ in the case
$\Var(Q_0) \cap \Var(Q_1) \ne \emptyset$.
We write $U \sim V$ for equivalence of words $U,V \in \Ga * F_X$.

\begin{lemma} \label{le:split-computation}
Let $Q$ be a quadratic word, $x_0,x_1,y_0,y_1,z_1,z_2,z_3,z_4$ be variables
not occurring in $Q$, and $c_1,c_2,c_3,c_4\in\Ga$.
The following holds:
\begin{enumerate}
\item
If $Q = U V$ then
$$
  U [x_0, y_0] V  \sim [x_0, y_0] Q, \quad
  U x_0^2 V  \sim x_0^2 Q \quad\text{and}\quad
  U z_1^{-1} c_1 z_1 V \sim Q z_1^{-1} c_1 z_1.
$$
\item
If $Q = U V W$ and $(R,S)$ is one of the pairs
$$
  (x_1^{-1} y_1^{-1} x_1 y_0, \ x_0^{-1} y_0^{-1} x_0 y_1), \quad
  (x_0^{-1} y_1^{-1} x_1 y_1, \ x_1^{-1} y_0^{-1} x_0 y_0) \quad\text{or}\quad
  (x_1^{-1} y_0^{-1} x_0 y_1, \ x_0^{-1} y_1^{-1} x_1 y_0)
$$
then
$$
  URVSW \sim [x_0, y_0] [x_1, y_1] Q.
$$
\item
If $Q = U V W$ then
$
  U x_0 x_1 V x_1 x_0 W \sim x_0^2 x_1^2 Q.
$
\item
If $Q = U V W$  then
$$
  U \cdot z_1^{-1} c_1 z_2 \cdot z_3^{-1} c_3 z_4 \cdot V \cdot z_2^{-1} c_2 z_1 \cdot z_4^{-1} c_4 z_3 \cdot
  W
    \sim [x_0 ,y_0] Q  \cdot z_1^{-1} c_1 c_2 z_1 \cdot z_2^{-1} c_3 c_4 z_2.
$$
\item
If $(R,S)$ is one of the pairs in (ii), then
$R \# S \sim [x_0, x_1]$.
\item
$
  z_1^{-1} c_1 z_2 \cdot z_3^{-1} c_3 z_4 \: \# \: z_2^{-1} c_2 z_1 \cdot z_4^{-1} c_4 z_3 \sim
  z_1^{-1} c_1 c_2 z_1 \cdot z_2^{-1} c_3 c_4 z_2.
$
\end{enumerate}
\end{lemma}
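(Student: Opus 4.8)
The plan is to verify all six equivalences by short explicit chains of elementary moves in $\Ga * F_X$. Recall that $U \sim V$ means $\phi(U)$ is conjugate to $V$ for some $\phi\in\Autf_\Ga(\Ga*F_X)$, so the moves available are: cyclic permutation $AB\sim BA$; renaming of variables; the sign flip $x\mapsto x^{-1}$ on one variable; and, for a variable $t$ not occurring in a word $P\in\Ga*F_X$, the conjugation $t\mapsto PtP^{-1}$ and the transvection $t\mapsto tP$ (fixing every other generator), both of which are finitely supported $\Ga$-automorphisms. The observation that makes everything go is that in each clause the variables $x_0,x_1,y_0,y_1,z_1,z_2,z_3,z_4$ occur neither in $Q$ nor, a fortiori, in $U,V,W$; hence these conjugations and transvections may be applied freely with any such fresh variable as $t$ and any of $U,V,W$, a subword thereof, or a coefficient $c_i$ as $P$. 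I will also use the reflection move: if $x$ occurs twice with the same sign in $AxBxC$ and $x$ does not occur in $B$, then the transvection $x\mapsto xB^{-1}$ gives $AxBxC\sim Ax^2B^{-1}C$ (and symmetrically $Ax^{-1}Bx^{-1}C\sim AB^{-1}x^{-2}C$).

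Parts (i) and (iii) I would dispose of by ``sliding''. For (i): conjugating $x_0$ and $y_0$ by $V$ turns $U[x_0,y_0]V$ into $UV[x_0,y_0]=Q[x_0,y_0]\sim[x_0,y_0]Q$; the same conjugations give $Ux_0^2V\sim x_0^2Q$; and the transvection $z_1\mapsto z_1V^{-1}$ turns $Uz_1^{-1}c_1z_1V$ into $Qz_1^{-1}c_1z_1$. For (iii): applying the reflection move first to $x_1$ (with $B=V$) and then to $x_0$ produces $Ux_0^2Vx_1^{-2}W$; a sign flip on $x_1$, the conjugation $x_0\mapsto Vx_0V^{-1}$, a cyclic permutation, and finally conjugating $x_0$ and $x_1$ by $W$ followed by another cyclic permutation yield $x_0^2x_1^2UVW=x_0^2x_1^2Q$.

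For (v) and (vi) I would write out $R\# S$ explicitly for a convenient joining variable — legitimate by Remark~\ref{rm:join-words-invariance} — and then reduce. For each of the three pairs in (ii), $R\# S$ comes out as a commutator in which two of the four variables occur multiplied together, e.g.\ $[x_0x_1,y_0]$; substituting one factor of the product (here $x_1\mapsto x_0^{-1}x_1$) collapses it, and after the now-free renaming of the surviving variables one is left with $[x_0,x_1]$. For (vi), joining along $z_1$ gives $z_4^{-1}c_4z_3z_2^{-1}c_2c_1z_2z_3^{-1}c_3z_4$; conjugating away $z_4$, permuting cyclically, and conjugating away $z_3$ reduces this to $z_3^{-1}(c_3c_4)z_3\cdot z_2^{-1}(c_2c_1)z_2$; swapping the two coefficient blocks (a cyclic permutation together with $z_1\leftrightarrow z_2$, after renaming $z_3$ to $z_1$) and then applying the conjugation $z_1\mapsto c_1^{-1}z_1$ to rewrite $c_2c_1$ as $c_1c_2$ inside its block gives the claimed $z_1^{-1}c_1c_2z_1\cdot z_2^{-1}c_3c_4z_2$.

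The substantial parts are (ii) and (iv), and there the hard part will be the bookkeeping rather than any individual step. My plan is to use transvections $t\mapsto tP^{\pm1}$ with $t$ fresh and $P\in\{U,V,W\}$, interleaved with cyclic permutations, to absorb $U$, $V$ and $W$ one at a time into the fresh variables — each absorption sweeping a constant block off one end of the word into the interior, where it merges with the factor attached to a fresh variable — until the word reaches a ``pure'' form in which $U,V,W$ survive only as the single concatenated factor $Q$ (up to a cyclic permutation) and everything else is built from fresh variables (in (iv), interspersed with the coefficients $c_1,\dots,c_4$). From there I would reorganize exactly as in the computation of $R\# S$ above: for (ii) extracting $[x_0,y_0][x_1,y_1]$ in front of $Q$, and for (iv) extracting the extra commutator $[x_0,y_0]$, carrying $Q$ along, and merging the two interleaved coefficient segments into $z_1^{-1}c_1c_2z_1$ and $z_2^{-1}c_3c_4z_2$. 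No sign flips enter (ii) or (iv) — only conjugations, transvections, renamings and cyclic permutations — so orientability is automatically preserved, which settles the orientability assertions. The care required is in choosing the order of absorptions so that the intermediate words stay tractable (re-standardizing on the occasions when a transvection temporarily spoils quadraticity), and in tracking the fresh variables through the final reorganization so as to confirm that precisely the stated commutators, and precisely the coefficient products $c_1c_2$ and $c_3c_4$ in precisely the stated blocks, emerge at the end.
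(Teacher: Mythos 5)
Your approach is the right one and matches the paper's: everything is proved by explicit chains of finitely supported $\Ga$-automorphisms (transvections, conjugations) plus cyclic permutations, with the two constant segments absorbed into fresh variables before the ``pure'' part is reorganized. Your treatments of (i), (iii) and (vi) check out (and for (vi) note that $z_1\mapsto c_1^{-1}z_1$ is a transvection, not a conjugation, but it is a legal move and does the job). I flag two points.

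First, your description of (v) fails as stated for the third pair $(R,S)=(x_1^{-1}y_0^{-1}x_0y_1,\ x_0^{-1}y_1^{-1}x_1y_0)$. For the first two pairs the join does come out directly as a product commutator (e.g.\ $R\#_{y_1}S=[x_0x_1,y_0]$ for the first pair, $R\#_{x_1}S=[x_0,y_0y_1]$ for the second). But for the third pair every choice of joining variable produces a word of the shape $(uv)^{-1}w^{-1}(vu)w$ with the product order \emph{reversed} between the two occurrences — for instance $R\#_{y_0}S=x_1^{-1}x_0^{-1}y_1^{-1}\,x_1x_0\,y_1$, which is not literally a commutator of a product. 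You need one extra transvection (e.g.\ $y_1\mapsto x_1y_1$) to turn it into $[x_0x_1,y_1]$ before your collapse-and-rename step applies. The claimed equivalence $R\#S\sim[x_0,x_1]$ is still true, but the intermediate claim ``the join comes out as $[uv,w]$'' is false for this case and needs the extra move.

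Second, parts (ii) and (iv) — the ones the paper actually does carry out with explicit three-step chains — are left here at the level of a plan, with the bookkeeping explicitly deferred. The plan does match the paper's computation (absorb $V$, then $U$, then rearrange the pure part into $[x_0,y_0][x_1,y_1]$, respectively $[x_0,y_0]$ plus two merged coefficient blocks), and I have no doubt you could push it through; but as written those two items aren't actually verified, and the parenthetical about ``re-standardizing when a transvection temporarily spoils quadraticity'' suggests you haven't yet noticed that the transvections used in these chains are chosen precisely so the inserted copies of $U$ or $V$ cancel against the ones already present, keeping the word quadratic at every step. Making that observation explicit would both simplify your plan and remove the need for re-standardization.
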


\begin{proof}
Straightforward computations.
(i):
\begin{gather*}
  U [x_0, y_0] V
  \xrightarrow{(x_0 \mapsto U^{-1} x_0 U, \ y_0 \mapsto U^{-1} y_0 U)}
  [x_0, y_0] U V, \\
  U x_0^2 V
  \xrightarrow{(x_0 \mapsto U^{-1} x_0 U)}
  x_0^2 U V, \\
  U z_1^{-1} c_1 z_1 V
  \xrightarrow{(z_1 \mapsto z_1 V^{-1})}
  U V z_1^{-1} c_1 z_1.
\end{gather*}
To prove (ii), assume $R = x_1^{-1} y_1^{-1} x_1 y_0$ and $S = x_0^{-1} y_0^{-1} x_0 y_1$. Then:
\begin{align*}
  &U x_1^{-1} y_1^{-1} x_1 y_0 V x_0^{-1} y_0^{-1} x_0 y_1 W
    &\xrightarrow{(x_0 \mapsto x_0 V, \ x_1 \mapsto V^{-1}x_1, \ y_1 \mapsto V^{-1} y_1 V)}\quad
  &U x_1^{-1} y_1^{-1} x_1 y_0 x_0^{-1} y_0^{-1} x_0 y_1 V W \\
    &&\xrightarrow{(x_i \mapsto U^{-1}x_i U,\ y_i \mapsto U^{-1} y_i U),\ i=0,1}\quad
  &x_1^{-1} y_1^{-1} x_1 y_0 x_0^{-1} y_0^{-1} x_0 y_1 U V W \\
    &&\xrightarrow{(x_0 \mapsto y_1 x_0 y_1^{-1}, \ y_0 \mapsto y_1 y_0 y_1^{-1})}\quad
  &[x_0, y_1] [y_0^{-1}, x_1] UVW \\[1mm]
    && \sim \quad
  &[x_0, y_0] [x_1, y_1] UVW.
\end{align*}
The other two cases for $(R,S)$ are similar.

(iii):
The quadratic word $U x_0 x_1 V x_1 x_0 W$ can be modified as follows:
\begin{alignat*}{2}
  && U x_0 x_1 V x_1 x_0 W
    \xrightarrow{(x_0 \mapsto x_0 V, \ x_1 \mapsto V^{-1} x_1)}\quad
  &U x_0 x_1^2 x_0 V W \\
    &&\xrightarrow{(x_0 \mapsto U^{-1} x_0 U, \ x_1 \mapsto U x_1 U^{-1})}\quad
  &x_0 x_1^2 x_0 U V W \\
    &&\xrightarrow{(x_0 \mapsto x_0 x_1^{-2}, \ x_1 \mapsto x_1^{-1})}\quad
  &x_0^2 x_1^2 U V W.
\end{alignat*}

(iv): The quadratic word $U z_1^{-1} c_1 z_2 z_3^{-1} c_3 z_4 V  z_2^{-1} c_2 z_1 z_4^{-1} c_4 z_3 W$
can be modified as follows:
\begin{align*}
   U z_1^{-1} c_1 z_2 z_3^{-1} c_3 z_4 V  z_2^{-1} c_2 z_1 z_4^{-1} c_4 z_3 W \hspace{-2em} \\
    &\xrightarrow{(z_2 \mapsto z_2 V, \ z_3 \mapsto z_3 V)}
  &&U z_1^{-1} c_1 z_2 z_3^{-1} c_3 z_4 z_2^{-1} c_2 z_1 z_4^{-1} c_4 z_3 VW \\
    &\xrightarrow{(z_1 \mapsto z_1 U, \ z_4 \mapsto z_4 U)}
  &&z_1^{-1} c_1 z_2 z_3^{-1} c_3 z_4 U z_2^{-1} c_2 z_1 z_4^{-1} c_4 z_3 VW \\
    &\xrightarrow{(z_2 \mapsto z_2 U, \ z_3 \mapsto z_3 U)}
  &&z_1^{-1} c_1 z_2 z_3^{-1} c_3 z_4 z_2^{-1} c_2 z_1 z_4^{-1} c_4 z_3 UVW
\end{align*}
A reduction of $z_1^{-1} c_1 z_2 z_3^{-1} c_3 z_4 z_2^{-1} c_2 z_1 z_4^{-1} c_4 z_3$ to
the standard form gives
$$
  z_1^{-1} c_1 z_2 z_3^{-1} c_3 z_4 z_2^{-1} c_2 z_1 z_4^{-1} c_4 z_3 Q
  \sim
  [x_0 ,y_0]  z_1^{-1} c_1 c_2 z_1 z_2^{-1} c_3 c_4 z_2 Q.
$$
We then move the factor $z_1^{-1} c_1 c_2 z_1 z_2^{-1} c_3 c_4 z_2$ to the end of $Q$ by (i).
Equivalences (v) and (vi) are similar.
\end{proof}

\begin{proposition}[Non-disjoint orientable case]\label{pr:image-joint}
Let $(Q=1,\ga)$ be a quadratic equation where
    $$Q = [x_1,y_1] [x_2,y_2] \ldots [x_g,y_g] \cdot z_1^{-1} c_1 z_1 \cdot\ldots\cdot z_{m}^{-1} c_{m} z_{m}$$
is a standard orientable quadratic word and $\si_\ga(Q) = 1$. Let $\Psi(Q) = (Q_0,Q_1)$.
Assume that $\Var(Q_0) \cap \Var(Q_1) \ne \emptyset$. Then $Q_0 \# Q_1$ is equivalent to
a standard quadratic word:
$$
  R = [x_1,y_1] [x_2,y_2] \ldots [x_h,y_h] \cdot z_1^{-1} d_1 z_1 \cdot\ldots\cdot z_{l}^{-1} d_{l} z_{l}
$$
satisfying the following:
\begin{enumerate}
    \item
$h = 2g+\frac{1}{2} \de(Q)-1$, where $\de(Q)$ is the cardinality of the set $\{i\mid c_i\notin \St_\Ga(1)\}$;
    \item
$C(R) = \cup_{i=1}^m K_i \setminus \{1\}$, where
$$
  K_i =
\begin{cases}
\set{\psi_0(c_i),\: \psi_1(c_i)} & \text{if } c_i \in \St_\Ga(1), \\
\set{\psi_0(c_ia)\psi_1(c_ia),\: \psi_1(c_ia)\psi_0(c_ia)} & \text{if } c_i \notin \St_\Ga(1).
\end{cases}
$$
\end{enumerate}
\end{proposition}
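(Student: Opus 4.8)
The plan is to reduce $Q_0\#Q_1$ to standard form block by block, using only the equivalences collected in Lemma~\ref{le:split-computation}, and then to read off $h$ and $C(R)$.

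First I would write $Q_0=\Psi_0(Q)$ and $Q_1=\Psi_1(Q)$ explicitly from the formulas for $\Psi_i$ on the factors $[x_j,y_j]$ and $z_j^{-1}c_jz_j$ given before the statement. Since $\si_\ga([x,y])=1$ and $\si_\ga(z^{-1}cz)=\si_\ga(c)$, the component $\Psi_i$ assigned to the $j$-th coefficient block flips between $\Psi_0$ and $\Psi_1$ exactly at the blocks with $c_j\notin\St_\Ga(1)$; as $\si_\ga(Q)=1$, the number $\de(Q)$ of such blocks is even. I would then classify the factors of $Q$: commutators $[x_j,y_j]$ with $\si_\ga(x_j)=\si_\ga(y_j)=1$, whose images are disjoint copies $[x_{j,0},y_{j,0}]\subseteq Q_0$ and $[x_{j,1},y_{j,1}]\subseteq Q_1$; the remaining commutators, whose image pairs $(\Psi_0([x_j,y_j]),\Psi_1([x_j,y_j]))$ are exactly the pairs $(R,S)$ in Lemma~\ref{le:split-computation}(ii) and share all descendant variables of $x_j,y_j$; coefficient blocks with $c_j\in\St_\Ga(1)$, whose images are disjoint conjugates with coefficients $\psi_0(c_j)$ and $\psi_1(c_j)$; and coefficient blocks with $c_j\notin\St_\Ga(1)$, whose images $z_{j,\al}^{-1}\psi_\al(c_ja)z_{j,\al+1}\subseteq Q_0$ and $z_{j,\al+1}^{-1}\psi_{\al+1}(c_ja)z_{j,\al}\subseteq Q_1$ share $z_{j,0}$ and $z_{j,1}$, the index $\al$ being determined by $\ga$ and the position of the block. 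Inspecting this list gives $\Var(Q_0)\cap\Var(Q_1)\ne\eset$ iff there is a commutator of the second kind or a coefficient block with $c_j\notin\St_\Ga(1)$, shows that every descendant variable occurs twice in the pair $(Q_0,Q_1)$ with opposite signs, and hence that $Q_0\#_xQ_1$ is a genuine orientable quadratic word (Lemma~\ref{le:quadratic_split}(iii)) formed without inversion.

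Next I would carry out the reduction. Choose the join variable $x$ inside a commutator of the second kind when one exists, and otherwise inside a coefficient block with $c_j\notin\St_\Ga(1)$ (possible since then $\de(Q)\ge2$). Up to conjugacy $Q_0\#_xQ_1$ is a product of the blocks of $Q_0$ and $Q_1$ together with the glued piece coming from the $x$-blocks, and one brings the pieces into position with the conjugation moves of Lemma~\ref{le:split-computation}(i). Each first-kind commutator contributes two commutators; write $p$ for their number. Each second-kind commutator not containing $x$ contributes two commutators by Lemma~\ref{le:split-computation}(ii), and the one containing $x$ (if any) contributes one commutator by Lemma~\ref{le:split-computation}(v). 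The $\de(Q)$ blocks with $c_j\notin\St_\Ga(1)$ are grouped in pairs: a pair missing the join site is absorbed by Lemma~\ref{le:split-computation}(iv) into one commutator and two coefficient blocks whose coefficients lie in the corresponding sets $K_i$, while the pair meeting the join site is absorbed by Lemma~\ref{le:split-computation}(vi) into two coefficient blocks and no commutator. Each block with $c_j\in\St_\Ga(1)$ gives two coefficient blocks with coefficients $\psi_0(c_j),\psi_1(c_j)$. Collecting commutators to the front and coefficient blocks to the back by Lemma~\ref{le:split-computation}(i) yields a standard orientable word, and in either choice of $x$ the total number of commutators is $2p+2(g-p)-1+\tfrac12\de(Q)=2g+\tfrac12\de(Q)-1$, which is~(i). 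For~(ii), every coefficient produced lies in some $K_i$; conversely each $K_i$ is met (for $c_j\in\St_\Ga(1)$ both of $\psi_0(c_j),\psi_1(c_j)$ appear, and for $c_j\notin\St_\Ga(1)$ the element of $K_i$ that appears may be swapped for the other by the $\Ga$-automorphism $z\mapsto uz$, which turns a coefficient $uv$ into $vu$); discarding the trivial coefficients, which come from vanished blocks, gives $C(R)=\bigcup_i K_i\setminus\set{1}$.

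The step I expect to be the main obstacle is keeping this accounting honest. The equivalences of Lemma~\ref{le:split-computation} are formulated for factors sitting at prescribed positions $U(\cdot)V$ or $U(\cdot)V(\cdot)W$ inside a word, whereas passing from $(Q_0,Q_1)$ to $Q_0\#_xQ_1$ cyclically rearranges $Q_1$ about the join letter; one has to verify that after this orientation-preserving (hence non-inverting) operation every original factor still occurs intact and can be brought to a position where the relevant clause of Lemma~\ref{le:split-computation} applies, and that the successive reductions do not interfere. The other delicate point is bookkeeping the coset $\si_\ga$ of each prefix, so as to pin down which component $\psi_0$ or $\psi_1$ (applied to $\bar c_j$ or to $a\bar c_ja$) is inserted at each $\Ga$-letter, and hence that the coefficients fall exactly into the sets $K_i$. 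Once the block decomposition of the second paragraph is in hand, both are a routine, if lengthy, verification.
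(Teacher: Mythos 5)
Your proposal follows essentially the same route as the paper: identify via Lemma~\ref{le:quadratic_split}(i) the two sources of shared descendant variables (a commutator with $\si_\ga(x_i)\ne1$ or $\si_\ga(y_i)\ne1$, or a coefficient block with $c_j\notin\St_\Ga(1)$), choose the join variable at one such site, then reduce $Q_0\#_xQ_1$ block by block using the equivalences in Lemma~\ref{le:split-computation}((i),(ii),(iv),(v),(vi)), and finally count commutators and collect coefficients. The accounting $2g-1+\tfrac12\de(Q)$ and the identification of coefficients with elements of the $K_i$ match the paper's computation, and your observation that the non-stabilizer coefficient blocks pair up (two in each component, glued via Lemma~\ref{le:split-computation}(iv) or (vi)) is exactly the mechanism used there. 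You are in fact somewhat more careful than the paper on two points: you note that orientability forces the join $\#_x$ to be orientation-preserving (so factors are not inverted and the intermediate word stays orientable, justifying the uninterrupted use of the lemma's clauses), and you flag the subtlety in part (ii) that for $c_i\notin\St_\Ga(1)$ only one of the two conjugate elements of $K_i$ literally appears as a coefficient; the swap automorphism $z\mapsto uz$ shows either one can be arranged to appear, which is what matters for the later coefficient-reduction step, though as you imply the equality $C(R)=\bigcup K_i\setminus\set{1}$ is slightly overstated in the statement for these $i$.
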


\begin{proof}
The assumption $\gamma(Q)\in \St_\Ga(1)$ implies that the number $\delta(Q)$ is even.
By Lemma \ref{le:quadratic_split}(i), we have $\si_\ga(x_i) \ne 1$ or $\si_\ga(y_i) \ne 1$ for some
commutator $[x_i,y_i]$ in ~$Q$ or $c_j \notin St_\Ga(1)$ for some $j$. We compute the standard form of $Q_0\#
Q_1$.

{\em Case\/} 1:  $\si_\ga(x_i) \ne 1$ or $\si_\ga(y_i) \ne 1$ for some $i$.
Let $Q = U [x_i,y_i] V$. Then
\begin{align*}
    Q_0 &= U_0 x_{i1}^{-1} y_{i1}^{-1} x_{i1} y_{i0} V_0, &
    Q_1 &= U_1 x_{i0}^{-1} y_{i0}^{-1} x_{i0} y_{i1} V_1 &&
    \text{if } \si_\ga(x_i) \ne 1, \ \si_\ga(y_i) = 1, \\
    Q_0 &= U_0 x_{i0}^{-1} y_{i1}^{-1} x_{i1} y_{i1} V_0, &
    Q_1 &= U_1 x_{i1}^{-1} y_{i0}^{-1} x_{i0} y_{i0} V_1 &&
    \text{if } \si_\ga(x_i) \ne 1, \ \si_\ga(y_i) = 1, \\
    Q_0 &= U_0 x_{i1}^{-1} y_{i0}^{-1} x_{i0} y_{i1} V_0, &
    Q_1 &= U_1 x_{i0}^{-1} y_{i1}^{-1} x_{i1} y_{i0} V_1 &&
    \text{if } \si_\ga(x_i) \ne 1, \ \si_\ga(y_i) = 1
\end{align*}
where $U_k = \Psi_k(U)$, $V_k = \Psi_k(V)$ for $k=0,1$.
We have the corresponding cases for $Q_0 \# Q_1$:
\begin{align*}
  Q_0 \#_{y_{i0}} Q_1 &= U_0 x_{i1}^{-1} y_{i1}^{-1} x_{i1} x_{i0} y_{i1} V_1 U_1 x_{i0}^{-1} V_0, \quad \text{or}
  \\
  Q_0 \#_{x_{i1}} Q_1 &= U_0 x_{i0}^{-1} y_{i1}^{-1} y_{i0}^{-1} x_{i0} y_{i0} V_1 U_1 y_{i1} V_0, \quad \text{or}
  \\
  Q_0 \#_{x_{i0}} Q_1 &= U_0 x_{i1}^{-1} y_{i0}^{-1} y_{i1}^{-1} x_{i1} y_{i0} V_1 U_1 y_{i1} V_0.
\end{align*}
Assume that $\si_\ga(x_i) \ne 1$ and $\si_\ga(y_i) = 1$ (the other two cases are similar).
Using Lemma \ref{le:split-computation} we reduce $Q_0 \#_{y_{i0}} Q_1$ to a standard form $R$:
\begin{itemize}
\item
By statements (i) and (ii) of the lemma, collect words $\psi_k([x_j,y_j])$ for each commutator $[x_j,y_j]$
in $UV$ to the left; each commutator $[x_j,y_j]$ in $UV$ contributes then two commutators to $R$.
\item
By statement (i) of the lemma, collect
words $\psi_k(z_j^{-1}c_j z_j)$ for each coefficient factor $z_j^{-1}c_j z_j$
with $c_j\in\St_\Ga(1)$ to the right; each factor $z_j^{-1}c_j z_j$ contributes to $R$ at most two coefficient
factors
of a similar form (if $\psi_k(c_j) = 1$ then the factor with $\psi_k(c_j)$ disappears).
\item
By statement (vi) of the lemma,
collect words $\psi_k(z_j^{-1}c_j z_j)$ for the remaining coefficient factors $z_j^{-1}c_j z_j$
with $c_j\notin\St_\Ga(1)$ to the right (they are now paired as in the left-hand side of the equivalence in
(vi)).
Each pair of factors $z_j^{-1}c_j z_j$ with $c_j\notin\St_\Ga(1)$ contributes one commutator and at most one
coefficient factor to $R$;
\item Finally, replace the remaining non-reduced subword with a commutator by Lemma
    \ref{le:split-computation}(v).
\end{itemize}

{\em Case\/} 2: $c_j \notin St_\Ga(1)$ for some $j$. Let $Q = U z_j^{-1} c_j z_j V$. Then
\begin{gather*}
  Q_0 = U_0 z_{j0}^{-1} c_{j0} z_{j1} V_1, \quad
  Q_1 = U_1 z_{j1}^{-1} c_{j1} z_{j0} V_0 \quad
 \text{if } \si_\ga(z_j) = 1 \\
  Q_0 = U_0 z_{j1}^{-1} c_{j1} z_{j0} V_1, \quad
  Q_1 = U_1 z_{j0}^{-1} c_{j0} z_{j1} V_0 \quad
 \text{if } \si_\ga(z_j) \ne 1
\end{gather*}
where $U_k = \Psi_k(U)$, $V_k = \Psi_k(V)$, $c_{jk} = \psi_k(\bar{c}_j)$, $k=0,1$.
Up to re-enumeration of variables and coefficients, we may assume that $\si_\ga(z_j) = 1$.
In this case
$$
  Q_0 \#_{z_{j0}} Q_1 = U_0 V_0 U_1 z_{j1}^{-1} c_{j1}  c_{j0} z_{j1} V_1.
$$
Then we proceed similarly to Case 1.

Statements (i) and (ii) of Proposition \ref{pr:image-joint} now easily follow from the reduction process
and right hand sides of the equivalences in Lemma \ref{le:split-computation}(i,iv,vi).
\end{proof}

\begin{proposition}[Non-disjoint non-orientable case]\label{pr:image-joint-nonorient}
Let $(Q=1,\ga)$ be a quadratic equation where
    $$Q = x_1^2 x_2^2 \ldots x_g^2 \cdot z_1^{-1} c_1 z_1 \cdot\ldots\cdot z_{m}^{-1} c_{m} z_{m}$$
is a standard non-orientable quadratic word and $\si_\ga(Q) = 1$.
Let $\Psi(Q) = (Q_0,Q_1)$ and $\Var(Q_0) \cap \Var(Q_1) \ne \emptyset$.
Then $Q_0 \# Q_1$ is equivalent to a standard quadratic word
(which is non-orientable if $g>0$ and orientable otherwise)
$$
  R = x_1^2 x_2^2 \ldots x_h^2 \cdot z_1^{-1} d_1 z_1 \cdot\ldots\cdot z_{l}^{-1} d_{l} z_{l}
$$
satisfying the following:
\begin{enumerate}
    \item
$h = 2g+\de(Q)-2$;
    \item
$C(R) = \set{d_1,d_2,\dots,d_l}$ is the same as in Proposition ~\ref{pr:image-joint}.
\end{enumerate}
\end{proposition}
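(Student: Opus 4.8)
The plan is to run through the argument of Proposition~\ref{pr:image-joint} once more, with squares playing the role that commutators played there. By Lemma~\ref{le:quadratic_split}(i), the hypothesis $\Var(Q_0)\cap\Var(Q_1)\ne\emptyset$ forces either $c_j\notin\St_\Ga(1)$ for some coefficient of $Q$, or $\si_\ga(x_i)\ne1$ for some square $x_i^2$ of $Q$; and since $\si_\ga(Q)=\prod_j\si_\ga(c_j)=1$, the number $\de(Q)$ is even. The first situation is the one to exploit whenever possible: splitting at a bad coefficient leaves the $g\ge1$ squares of $Q$ untouched, which is what makes the square count come out right.

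\emph{Case $\de(Q)>0$.} Pick $j$ with $c_j\notin\St_\Ga(1)$, write $Q=U\,z_j^{-1}c_jz_j\,V$, and, re-enumerating if necessary, assume $\si_\ga(z_j)=1$. As in Case~2 of Proposition~\ref{pr:image-joint}, $Q_0\#_{z_{j0}}Q_1$ is a single quadratic word carrying, at the splitting site, the coefficient factor $z_{j1}^{-1}\psi_1(\bar c_j)\psi_0(\bar c_j)z_{j1}$ and no commutator. Reduce it to standard form by repeated use of Lemma~\ref{le:split-computation}: by part~(i) push the images of the $g$ squares of $Q$ to the left --- each square contributes two squares to $R$, those with $\si_\ga(x_i)\ne1$ via part~(iii) --- and push the images of the coefficient factors with $c\in\St_\Ga(1)$ to the right, each contributing at most the two factors carrying $\psi_0(c)$ and $\psi_1(c)$; by parts~(iv) and~(vi) collect the images of the remaining $\de(Q)-1$ bad coefficients, which after moving good factors aside group into $\frac12\de(Q)-1$ linked pairs, each pair contributing one commutator and at most two coefficient factors, together with one unpaired factor contributing only a coefficient factor. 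Finally trade each of the $\frac12\de(Q)-1$ commutators for two squares via the equivalence $x^2y^2z^2\sim x^2[y,z]$ recalled in the introduction; this is legitimate because the word being assembled already contains squares ($g\ge1$) and so is non-orientable. Thus $R$ has $2g+2(\frac12\de(Q)-1)=2g+\de(Q)-2$ squares, giving~(i), and $C(R)$, read off from the right-hand sides in Lemma~\ref{le:split-computation}(i,iv,vi), is exactly the set described in Proposition~\ref{pr:image-joint}, giving~(ii).

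\emph{Case $\de(Q)=0$.} Now all coefficients lie in $\St_\Ga(1)$ and $\si_\ga(x_i)\ne1$ for some $i$; write $Q=U\,x_i^2\,V$ and form $Q_0\#Q_1$ along one of the two descendants of $x_i$ shared by $Q_0$ and $Q_1$. The images of the other $g-1$ squares give $2(g-1)$ squares in $R$ (by Lemma~\ref{le:split-computation}(i),(iii)), the coefficient factors --- all good --- produce only coefficient factors, and the non-reduced residue at the splitting site is a conjugation which is absorbed into the rest of the word without affecting the genus. Hence $R$ has $2g-2=2g+\de(Q)-2$ squares; when $g=1$ this number is $0$ and $R$ collapses to its orientable coefficient part, as recorded in the statement, and $C(R)$ is once more the set of Proposition~\ref{pr:image-joint}.

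The step I expect to be the main obstacle is the orientability and genus bookkeeping, i.e.\ checking that the square count is precisely $2g+\de(Q)-2$ rather than one of the neighbouring values. Concretely: one must verify that every commutator produced while reducing $Q_0\#Q_1$ sits in a genuinely non-orientable environment, so that the trade $x^2y^2z^2\sim x^2[y,z]$ applies --- this is exactly why one splits at a bad coefficient when one is available, keeping the original squares in play --- and that the residue left at a square splitting site in the case $\de(Q)=0$ really does absorb into the rest. Both are routine but delicate verifications on the explicit block expressions displayed before Lemma~\ref{le:quadratic_split} and in Lemma~\ref{le:split-computation}, entirely parallel to the bookkeeping carried out for Proposition~\ref{pr:image-joint}.
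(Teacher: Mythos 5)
Your proposal is correct and follows essentially the same approach as the paper's own (very terse) proof, which consists only of the remark that the argument mirrors Proposition~\ref{pr:image-joint} together with the two adjustments that you spell out: the base case $x_0x_1\#x_1x_0=1$ giving zero genus at a square splitting site (your $\de(Q)=0$ branch), and the trade $x^2[y,z]\sim x^2y^2z^2$ converting each commutator arising from a pair of bad coefficients into two squares (your $\de(Q)>0$ branch). Your case split on $\de(Q)$ and the choice to always split at a bad coefficient when one is available is a clean way to organize the bookkeeping that the paper leaves implicit.
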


\begin{proof}
Similar to the proof of Proposition \ref{pr:image-joint}.
There is a slight difference in computing the genus $h$: in case of a single square $Q=x_0^2$ we get:
$$
  R = x_0 x_1 \# x_1 x_0 =1
$$
and each commutator coming from the coefficients by Lemma \ref{le:split-computation}(iv) contributes
$2$ to ~$h$ by the equivalence
$x^2 [y, z] \sim z^2 y^2 z^2$.
\end{proof}

We summarize properties of the splitting operation for constrained quadratic equations in ~$\Ga$
in the following proposition.

\begin{proposition} \label{pr:constrained-quadratic-splitting}
Let $(Q=1,\ga)$ be a standard quadratic equation in $\Ga$ with a constraint modulo $K$.
Assume that $\si_\ga(Q) = 1$ and let $\Psi(Q) = (Q_0, Q_1)$.
\begin{enumerate}
\item
Suppose that $\Var(Q_0) \cap \Var(Q_1) = \eset$. Then $Q_0$ and $Q_1$ are
standard quadratic words of the same genus and orientability as $Q$.
The coefficients of ~$Q_i$ are nontrivial elements $\psi_i(c_j)$, where $c_1,\dots,c_m$
are the coefficients of $Q$.
There are finitely many pairs of constraints $(\ga_{0j},\ga_{1j})$ such that
the equation $(Q=1, \ga)$ is solvable if and only if, for some $j$, both equations
$(Q_0=1, \ga_{0j})$ and $(Q_1=1, \ga_{1j})$ are solvable.

The set $\set{(\ga_{0j},\ga_{1j})}$ of pairs of constraints $\ga_{ij}$
is defined by restricting each constraint in $\CV_{Q,\ga}$ (see Definition \ref{de:splitting-constraint})
to $\Var(Q_0)$ and $\Var(Q_1)$. In other words, a pair $(\ga_0,\ga_1)$ belongs to this set if and only if
$$
  \om(\ga_0(x_{0}), \ga_1(x_{1})) = \overline {\ga(x)} \quad\text{for each } x \in \Var(Q),
$$
where $x_0,x_1$ are the descendants of a variable $x$ and $\om$
is given by Proposition ~\ref{pr:psi-via-K}.

\item
Suppose that $\Var(Q_1) \cap \Var(Q_2) \ne \eset$.
Then there is a standard quadratic word ~$R$ equivalent to $Q_0 \# Q_1$ and
finitely many constraints $\de_j: \Var(R) \to \Ga/K$ such that
the equation $(Q=1, \ga)$ is solvable if and only if, for some $j$, the equation $(R=1, \de_j)$ is solvable.
If $Q$ is orientable then $R$ is orientable. The genus and the coefficients of ~$R$ are as
in Propositions ~\ref{pr:image-joint} and ~\ref{pr:image-joint-nonorient}.

The set $\set{\de_j}$ is defined in the following way. Let $\phi\in \Autf_\Ga(\Ga*F_X)$
be a $\Ga$-automorphism sending $Q_0 \# Q_1$ to a conjugate of $R$.
We take the set $\CV_{Q,\ga}$ of constraints for $Q_0 \# Q_1$ defined in \eqref{eq:S_gamma},
and the subset $\CU$ of $\CV_{Q,\ga}$ of those $\ze \in \CV_{Q,\ga}$ which satisfy $\ze(Q_0)=\ze(Q_1)=1$.
Then for each $\ze \in \CU$, we take its restriction on $\Var(Q_0)\cup\Var(Q_1)$
and produce a constraint $\de: \Var(R) \to G/K$ using ~$\phi$ by Lemma ~\ref{le:constraint-transformation}.
\end{enumerate}

All the data provided by assertions (i) and (ii) can be effectively computed from the equation
$(Q=1,\ga)$.
\end{proposition}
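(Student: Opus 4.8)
The plan is to assemble the proposition from the splitting reduction (Corollary~\ref{co:splitting-reduction}) together with the two structural statements already proved, Lemma~\ref{le:quadratic_split} for the disjoint case and Propositions~\ref{pr:image-joint} and~\ref{pr:image-joint-nonorient} for the non-disjoint case, keeping track of constraints via Lemmas~\ref{le:join-correctness} and~\ref{le:constraint-transformation}. The common starting point is that $\si_\ga(Q)=1$, so Corollary~\ref{co:splitting-reduction} applies and gives: $(Q=1,\ga)$ is solvable if and only if the system $(\set{Q_0=1,\ Q_1=1},\ze)$ is solvable for some $\ze\in\CV_{Q,\ga}$. Since $K$ has finite index in $\Ga$ and the data of Proposition~\ref{pr:psi-via-K} is finite, $\CV_{Q,\ga}$ is a finite, effectively computable set; and $\Psi(Q)=(Q_0,Q_1)$ is computed directly from the block formulas of Section~\ref{ss:quadratic-splitting}.

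\textbf{Part (i).} First invoke Lemma~\ref{le:quadratic_split}(ii): in the disjoint case $Q_0$ and $Q_1$ are standard quadratic words of the same genus and orientability as $Q$, with $C(Q_i)=\setof{\psi_i(c_j)}{\psi_i(c_j)\ne1}$. Next observe that when $\Var(Q_0)\cap\Var(Q_1)=\eset$ a $\Ga$-homomorphism solves $(\set{Q_0=1,Q_1=1},\ze)$ if and only if its restrictions to $\Ga*F_{\Var(Q_0)}$ and $\Ga*F_{\Var(Q_1)}$ solve $(Q_0=1,\ze|_{\Var(Q_0)})$ and $(Q_1=1,\ze|_{\Var(Q_1)})$ respectively, the two equations sharing no variable. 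Hence, letting $(\ga_{0j},\ga_{1j})$ range over the pairs $(\ze|_{\Var(Q_0)},\ze|_{\Var(Q_1)})$ for $\ze\in\CV_{Q,\ga}$ and combining with Corollary~\ref{co:splitting-reduction}, one gets exactly the stated equivalence; the explicit description of the pairs is just the defining condition~\eqref{eq:S_gamma} of $\CV_{Q,\ga}$ read off on the descendants $x_0,x_1$ of each $x\in\Var(Q)$. Every step here is effective.

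\textbf{Part (ii).} By Proposition~\ref{pr:image-joint} (if $Q$ is orientable) or Proposition~\ref{pr:image-joint-nonorient} (if not), $Q_0\#Q_1$ is equivalent to a standard quadratic word $R$ of the genus and coefficients stated there, orientable whenever $Q$ is; moreover the equivalence is realised through the explicit automorphisms of Lemma~\ref{le:split-computation}, so a $\Ga$-automorphism $\phi\in\Autf_\Ga(\Ga*F_X)$ sending $Q_0\#Q_1$ to a conjugate of $R$ is effectively available. Now restrict to the subset $\CU\seq\CV_{Q,\ga}$ of those $\ze$ with $\ze(Q_0)=\ze(Q_1)=1$: this loses nothing, since whenever $(\set{Q_0=1,Q_1=1},\ze)$ has a solution one automatically has $\ze(Q_i)=1$, so $(Q=1,\ga)$ is solvable iff $(\set{Q_0=1,Q_1=1},\ze)$ is solvable for some $\ze\in\CU$. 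For such $\ze$, take the joining variable $x\in\Var(Q_0)\cap\Var(Q_1)$ used to form $Q_0\#_xQ_1$; by Lemma~\ref{le:quadratic_split} it occurs exactly once in each $Q_i$, so Lemma~\ref{le:join-correctness} gives that $(\set{Q_0=1,Q_1=1},\ze)$ is solvable iff $(Q_0\#_xQ_1=1,\ze')$ is solvable with $\ze'$ the restriction of $\ze$. Finally Lemma~\ref{le:constraint-transformation}, applied with $\phi$, converts this into solvability of $(R=1,\de)$ for the effectively computed $\de\colon\Var(R)\to\Ga/K$; letting $\de_j$ range over the $\de$ obtained as $\ze$ runs over the finite set $\CU$ yields the claim, and effectiveness is clear since computing $\Psi(Q)$, enumerating $\CU$, producing $R$ and $\phi$, and transforming constraints are all effective.

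The one genuinely delicate point is the passage to $\CU$ and the use of Lemma~\ref{le:join-correctness}: one must be sure that imposing $\ze(Q_i)=1$ before joining the two equations costs no generality, which is precisely the observation (recorded just after the definition of constrained systems) that a solvable constrained system forces each of its equations to evaluate to $1$ under the constraint. Everything else is a direct transcription of the already-established lemmas, and in particular the numerical data in (i) and (ii) is inherited verbatim from Lemma~\ref{le:quadratic_split} and Propositions~\ref{pr:image-joint}--\ref{pr:image-joint-nonorient}.
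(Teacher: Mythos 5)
Your proposal is correct and follows essentially the same route as the paper, which simply cites Lemmas~\ref{le:join-correctness}, \ref{le:quadratic_split}, Corollaries~\ref{co:constrained-standard}, \ref{co:splitting-reduction} and Propositions~\ref{pr:image-joint}, \ref{pr:image-joint-nonorient} without further detail; you supply the correct assembly of these ingredients, including the decomposition of the disjoint system in (i), the harmless restriction to $\CU$ justified by the observation that a solvable constrained system forces $\ze(Q_i)=1$, the application of Lemma~\ref{le:join-correctness} to the joining variable, and the constraint transport via Lemma~\ref{le:constraint-transformation}.
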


\begin{proof}
Follows from Lemmas \ref{le:join-correctness}, \ref{le:quadratic_split},
Corollaries \ref{co:constrained-standard}, \ref{co:splitting-reduction} and
Propositions \ref{pr:image-joint}, \ref{pr:image-joint-nonorient}.
\end{proof}

\begin{remark} \label{rm:no-conjugation}
The transformation automorphism $\phi$ in
Proposition \ref{pr:constrained-quadratic-splitting}(ii) that
sends $Q_0 \# Q_1$ to its standard form $R$ can be chosen in such a way that
$\phi(Q_0 \# Q_1) = R$ without conjugation. This can be seen in a straightforward
way from the proofs of
Propositions \ref{pr:image-joint} and \ref{pr:image-joint-nonorient} and
the fact that conjugation in not needed in equivalences (v) and (vi) of Lemma \ref{le:split-computation}.
\end{remark}

\section{Solution of the Diophantine problem for quadratic equations} \label{se:diophantine}

In this section we prove Theorem \ref{thm:solvability}
by presenting an algorithm which for a given (unconstrained) quadratic equation $Q=1$ in $\Ga$
determines if the equation has a solution. The algorithm consists of Steps ~1--5 below.
To simplify notations, we assume that $Q$ is an orientable quadratic word (the non-orientable case is
literally the same, with commutators replaced by squares).

{\em Step\/} 1.
We reduce $Q$ to the standard form according to Proposition \ref{pr:quadratic-standard}.
Thus, from now on we write $Q$ as
$$
  Q = [x_1,y_1]\ldots[x_g,y_g] z_1^{-1} c_1 z_1 \ldots z_m^{-1} c_m z_m.
$$

{\em Step\/} 2.
We reduce the problem to constrained equations. For a given $Q$,
we write a finite list of all possible constraints $\ga_i: \Var(Q) \to \Ga/K$.
Then the equation $Q=1$ is solvable if and only if the constrained equation $(Q=1,\ga_i)$ is solvable
for some $i$.

We assume now that we are given a constrained standard quadratic equation $(Q=1,\ga)$.

{\em Step\/} 3. Given a standard equation $(Q=1,\ga)$, we start
recursive application of the splitting procedure described in Proposition
\ref{pr:constrained-quadratic-splitting}.
We use the following fact.

\begin{proposition}[Coefficient reduction] \label{pr:coefficient-reduction}
Let $(g_0,g_1,\dots)$ be a sequence
of elements in $\Ga$ satisfying the following condition
$$
g_{i+1} \in
\begin{cases}
\{\psi_0(g_i),\: \psi_1(g_i)\} & \mbox{ if } g_i\in \St_\Ga(1)\\
\{\psi_0(g_ia)\psi_1(g_ia),\: \psi_1(g_ia)\psi_0(g_ia)\} & \mbox{ if } g_i\notin \St_\Ga(1)\\
\end{cases}
$$
Then there exists $M = M(g_0)$ such that $|g_n|\le 3$ for every $n \ge M$.
In fact, one can take:
    $$M = 200+\log_{1.22} \max\set{1, |g_0|-200}.$$
\end{proposition}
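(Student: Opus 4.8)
The plan is to exploit the stronger contracting property of the splitting homomorphism $\psi$ with respect to a suitable length function on $\Ga$, and to track how the word length behaves under each step $g_i \mapsto g_{i+1}$. First I would fix the canonical generating set $\{a,b,c,d\}$ and work with the reduced form \eqref{eq:reduced-form}; for $g \in \St_\Ga(1)$ with $\psi(g) = (g_0, g_1)$, de la Harpe's estimate (cf.\ \cite[Chapter VIII]{Harpe}) gives $|g_0| + |g_1| \le \frac{1}{2}(|g| + \#\text{syllables})$ or, more precisely, a bound of the form $|g_0| + |g_1| \le \frac{1}{2}|g| + O(1)$. The point is that each of the two ``branches'' of the recursion roughly halves the length, and the subtlety in this Proposition is only that in the non-stabilizer case we pass from $g_i$ to a \emph{product} $\psi_0(g_ia)\psi_1(g_ia)$ (or the other order), so one single step of the given sequence effectively corresponds to \emph{two} applications of the component maps $\psi_j$ composed. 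I would show that in either case $|g_{i+1}| \le \frac{1}{2}|g_i| + C$ for an explicit absolute constant $C$: when $g_i \in \St_\Ga(1)$ this is immediate from the component contraction; when $g_i \notin \St_\Ga(1)$, write $\bar{g_i} = g_i a \in \St_\Ga(1)$ with $|\bar{g_i}| \le |g_i| + 1$, apply $\psi$ to get $(\psi_0(\bar{g_i}), \psi_1(\bar{g_i}))$ with each component of length $\le \frac{1}{2}|\bar{g_i}| + O(1)$, and note that the product of the two components has length $\le |\psi_0(\bar{g_i})| + |\psi_1(\bar{g_i})| \le \frac{1}{2}|\bar{g_i}| + O(1) \le \frac{1}{2}|g_i| + O(1)$ — here the fact that the \emph{sum} of component lengths is bounded by half of $|g_i|$ plus a constant is exactly what the strong contracting property provides, and it is what makes this case no worse than the stabilizer case.

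Granting the one-step inequality $|g_{i+1}| \le \theta |g_i| + C$ with some $\theta < 1$ and constant $C$, iterating gives $|g_n| \le \theta^n |g_0| + \frac{C}{1-\theta}$, so once $n$ is large enough that $\theta^n |g_0| \le$ (something small), the length is bounded by an absolute constant, and then a finite check — using that $\Ga$ is a torsion $2$-group with finitely many elements of each bounded length, and that the recursion eventually maps any short element into the finite set of elements of length $\le 3$ — closes out the claim $|g_n| \le 3$ for $n \ge M$. To get the explicit $M = 200 + \log_{1.22}\max\{1, |g_0| - 200\}$ I would need the sharper, empirically/combinatorially verified constants: the relevant contraction ratio is not literally $1/2$ but is governed by a factor whose reciprocal is about $1.22$ per step on the portion of the length exceeding a threshold near $200$, which is where the additive constant $C/(1-\theta)$ gets absorbed. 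Concretely, one shows $|g_{i+1}| - 200 \le \frac{1}{1.22}(|g_i| - 200)$ whenever $|g_i| > 200$; then $|g_n| \le 200$ as soon as $n \ge \log_{1.22}(|g_0| - 200)$, and one further verifies (finite computation over the bounded-length elements and the finite transition structure modulo $K$ from Proposition \ref{pr:psi-via-K}) that from length $\le 200$ the sequence reaches and stays within length $\le 3$ after at most $200$ more steps.

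The main obstacle I anticipate is establishing the precise contraction constant $1.22$ and the threshold $200$: the naive ``sum of lengths $\le \frac{1}{2}|g| + \text{const}$'' bound is true but too lossy to yield a clean geometric decay down to a \emph{single} branch (we are following just one of the two descendants, not both), so one must combine the length estimate with the observation that along the sequence the branch taken cannot always be the ``bad'' longer one — averaging or a more careful syllable-counting argument over two consecutive steps is needed. This is the kind of estimate that in the Grigorchuk group literature is done by a careful analysis of how syllables $ax_iax_{i+1}a$ reduce under $\psi$ (the maps $b,c,d \mapsto$ pairs cause cancellation), combined with the fact that $d$ contributes a trivial component. I would organize this as a separate length lemma with the explicit constants, verify the base cases by direct (finite) computation, and then the Proposition follows by the iteration described above. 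The torsion hypothesis is used only to guarantee the finite check terminates; as the authors remark, it is likely inessential.
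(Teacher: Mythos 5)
The paper's own proof is a single line: it cites Proposition~3.6 of \cite{Lysenok_Miasnikov_Ushakov:2010}, where the quantitative reduction is established. You instead sketch a direct argument, which is a legitimate alternative route, but the sketch has a genuine gap at its core.

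Your one-step inequality rests on the claim that for $g \in \St_\Ga(1)$ with $\psi(g)=(g_0,g_1)$ one has $|g_0|+|g_1| \le \tfrac12|g|+O(1)$ in the \emph{standard} word length. This is not correct. What the contraction lemma gives is $|g_i| \le \tfrac12|g|+O(1)$ for \emph{each} component separately; the sum is only bounded by $|g|+O(1)$. Concretely, writing $g$ in reduced form $x_1 a x_2 a\cdots$, each syllable block contributes one letter to $g_0$ and one letter to $g_1$ (zero only for $d$ or $ada$ on one side), so both components each have on the order of $|g|/2$ letters and the sum is on the order of $|g|$. The ``stronger contraction'' alluded to in the introduction is a contraction of $\psi$ with respect to a \emph{modified} (weighted) length function, not the plain word length, and you do not invoke it. As a consequence, in the case $g_i \notin \St_\Ga(1)$, where $g_{i+1}=\psi_0(g_ia)\psi_1(g_ia)$ is a \emph{product} of the two components, your estimate only yields $|g_{i+1}| \lesssim |g_i|+O(1)$, which does not contract; yet this case must be handled, since the recursion forces it whenever $g_i$ has an odd number of $a$'s. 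You do flag in the last paragraph that the naive sum bound ``is too lossy,'' but you describe it as true and merely insufficient, whereas it is in fact false as stated, and the proposed fix (``averaging over two steps,'' ``the branch taken cannot always be the bad one'') is precisely the nontrivial content of the cited Proposition~3.6 and cannot be left as a remark. Without either establishing a genuine one-step contraction in the non-stabilizer case (via the weighted length and a careful analysis of cancellations coming from $d$, $bcd=1$, etc.) or a correct two-step averaging argument, the iteration does not close and the explicit $M = 200+\log_{1.22}\max\{1,|g_0|-200\}$ is unsupported. Minor point: the torsion hypothesis is not used anywhere in this proposition, neither in the paper's proof nor in a correct version of yours; the finite check over elements of bounded length terminates simply because there are finitely many such elements.
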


\begin{proof}
Follows from Proposition 3.6 in \cite{Lysenok_Miasnikov_Ushakov:2010}.
\end{proof}

After applying the splitting operation at most $M$ times, we find a finite set $\CF$ of
systems of equations such that the solvability of $(Q=1,\ga)$ is equivalent to the solvability of
at least one system in $\CF$. Each system in $\CF$ is a finite set $\set{(Q_i=1,\ga_i)}$
of mutually independent quadratic equations $(Q_i=1,\ga_i)$ written in the standard form where
the length of each coefficient is at most 3.
Define a set:
$$
  \CS = \setof{g \in \Ga}{|g| \le 3}.
$$
Denote by $\CE_\CS$ the set of all standard orientable quadratic equations $(Q=1,\ga)$
with coefficients in $\CS$.
Now we may assume that we are given an equation $(Q=1,\ga)$ in $\CE_\CS$.

{\em Step\/} 4.
We fix a linear ordering on finite sets $\Ga/K$ and $\CS$.
Given an equation $(Q=1,\ga)$ in $\CE_\CS$, we transform it to the
{\em ordered form} according to the following lemma:

\begin{lemma}[Ordering factors]\label{le:ordering_terms}
For every equation $(Q=1,\ga)$ in $\CE_{\mathcal S}$, there exists (and can be effectively computed)
an equivalent equation $(Q=1,\ze)$ satisfying:
\begin{equation} \label{eq:ord1}
  (\ze(x_1),\ze(y_1)) \preceq (\ze(x_2),\ze(y_2)) \preceq \ldots \preceq (\ze(x_g),\ze(y_g))
\end{equation}
and
\begin{equation} \label{eq:ord2}
  (c_1,\ze(z_1)) \preceq (c_2,\ze(z_2)) \preceq \ldots \preceq (c_m,\ze(z_m))
\end{equation}
where ``$\preceq$'' is the lexicographic order induced by the orderings
on $\Ga/K$ and ~$\CS$.
\end{lemma}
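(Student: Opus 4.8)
The plan is to prove the lemma by a sorting argument. We bring $(Q=1,\ga)$ to the ordered form by repeatedly transposing two neighbouring blocks of $Q$ --- either two adjacent commutators $[x_i,y_i]$ and $[x_{i+1},y_{i+1}]$, or two adjacent coefficient factors $z_i^{-1}c_iz_i$ and $z_{i+1}^{-1}c_{i+1}z_{i+1}$ --- each transposition being realized by an explicit $\Ga$-automorphism $\phi\in\Autf_\Ga(\Ga*F_X)$ that fixes $Q$ up to conjugacy (possibly after a harmless relabelling of the variables of the two blocks, and, for coefficient factors, a reindexing of the coefficients; neither destroys the standard form). The induced change of the constraint is then read off by Lemma~\ref{le:constraint-transformation}, so the procedure is effective, and since there are only finitely many maps $\Var(Q)\to\Ga/K$ the sort terminates. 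In a standard word all commutators precede all coefficient factors, and by Lemma~\ref{le:split-computation}(i) a single commutator may be pushed to the front past anything, and a single coefficient factor to the end, so the two groups of blocks may be permuted independently; thus \eqref{eq:ord1} and \eqref{eq:ord2} become two separate sorting problems. (The non-orientable case is identical, with commutators replaced by squares $x_i^2$.)

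For \eqref{eq:ord1} I use the following elementary transposition: conjugate $x_i,y_i$ by $[x_{i+1},y_{i+1}]$, then relabel $x_i\leftrightarrow x_{i+1}$, $y_i\leftrightarrow y_{i+1}$ (alternatively, relabel first and restore the standard form by Lemma~\ref{le:split-computation}(i)). Pulling $\ga$ back along this composite, the pair $(\ga(x_{i+1}),\ga(y_{i+1}))$ is sent to position $i$ conjugated by $[\ga(x_i),\ga(y_i)]$, while $(\ga(x_i),\ga(y_i))$ is sent to position $i+1$ unchanged. The key observation is that $[\ga(x_i),\ga(y_i)]\in[\Ga/K,\Ga/K]$; by Lemma~\ref{le:K_split_component}(ii) the group $\Ga/K$ is the direct product of $\MZ/2\MZ$ with the dihedral group of order $8$, hence nilpotent of class $2$, so $[\Ga/K,\Ga/K]$ lies in its centre and this conjugation is trivial. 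Thus each transposition is a clean swap of the associated pairs, and an ordinary bubble sort produces $\ze$ satisfying \eqref{eq:ord1}. (For the non-orientable case one uses instead that every square in $\Ga/K$ lies in $[\Ga/K,\Ga/K]$, which again makes the relevant conjugations trivial.)

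For \eqref{eq:ord2} I sort lexicographically. An elementary transposition of two adjacent coefficient factors is realized by a braid-type automorphism --- conjugate the $i$-th factor by the $(i+1)$-st and relabel the two $z$-variables --- which carries the constants $c_i$ along unchanged; it moves $(c_i,\ga(z_i))$ to position $i+1$ unchanged, but moves $(c_{i+1},\ga(z_{i+1}))$ to position $i$ with $\ga(z_{i+1})$ right-multiplied by a conjugate of $\pi_K(c_{i+1})^{-1}$. Using these moves we first make the coefficients $c_i$ nondecreasing; the accompanying perturbation of the $z$-values is irrelevant at this stage. It remains to sort, inside each maximal run of factors sharing one coefficient $c$, the values $\ze(z_i)\in\Ga/K$; here we may choose (as we are free to do in Step~4) the linear order on $\Ga/K$ to refine the quotient $\Ga/K\to A:=\Ga/K/[\Ga/K,\Ga/K]$, so that this amounts to sorting first the images $\bar z_i\in A$ and then breaking ties by the central $[\Ga/K,\Ga/K]$-coordinate.

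This last step is the main obstacle, since on the $z$-values the braid move is not a clean transposition but twists them by a conjugate of $\pi_K(c)^{\pm1}$. The plan is to work modulo $[\Ga/K,\Ga/K]$: by Lemma~\ref{le:K_split_component} the quotient $A$ is elementary abelian of order $8$, so $\bar c$ (the image of $c$) has order dividing $2$, and modulo $[\Ga/K,\Ga/K]$ the braid move on positions $i,i+1$ sends $(\bar z_i,\bar z_{i+1})$ to $(\bar z_{i+1}\bar c^{-1},\bar z_i)$; composing such moves (and their squares, which merely multiply two consecutive residues by $\bar c$) one generates on the residue tuple of the run the symmetric group twisted by powers of $\bar c$, and a short parity count then shows that some reachable configuration is already nondecreasing. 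The residual ambiguity --- ordering, within a run, two factors with equal coefficient and equal residue but differing by the central involution --- is settled by one more adjustment. Effectiveness throughout follows from Lemma~\ref{le:constraint-transformation} and the finiteness of $\Ga/K$, $\CS$ and $\Var(Q)$; termination is the usual decrease of the inversion count in bubble sort.
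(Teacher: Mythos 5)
Your plan is to bring the equation into the ordered form by adjacent transpositions, which is indeed what the paper does. The observation you make in the commutator case---that the conjugating element $[\ga(x_{i+1}),\ga(y_{i+1})]$ lies in $[\Ga/K,\Ga/K]$, which is central since $\Ga/K\cong\MZ/2\MZ\times D_4$ is nilpotent of class $2$, so the transposition is a clean swap of pairs---is correct and is a nice simplification. However, the paper does not need it: it chooses the transposition automorphism so that the pair at position $i$ after the swap is the \emph{unchanged} pair that was at position $i+1$, which is strictly smaller. Then the full sequence $\bigl((\ze(x_1),\ze(y_1)),\dots,(\ze(x_g),\ze(y_g))\bigr)$ strictly decreases lexicographically at each step, and since there are only finitely many sequences over $\Ga/K$, the process terminates with a sorted sequence. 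That argument works regardless of whether the conjugation is trivial.

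Your treatment of the coefficient part has a genuine gap, and I think it stems from a choice you made. You chose the braid move so that the pair being sent to position $i$ gets its $z$-constraint right-multiplied by a conjugate of $\pi_K(c)^{\pm1}$; that is the wrong direction. The paper uses $\phi:\, z_i\mapsto z_i\,z_{i+1}^{-1}c_{i+1}^{-1}z_{i+1}$, under which $\phi\bigl(z_i^{-1}c_iz_i\cdot z_{i+1}^{-1}c_{i+1}z_{i+1}\bigr)=z_{i+1}^{-1}c_{i+1}z_{i+1}\cdot z_i^{-1}c_iz_i$: only the constraint on $z_i$ changes, so after relabelling, position $i$ carries the unchanged pair $(c_{i+1},\ga(z_{i+1}))$, which is strictly smaller than $(c_i,\ga(z_i))$ by hypothesis, while the perturbed pair lands at position $i+1$. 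Consequently the whole sequence $\bigl((c_1,\ze(z_1)),\dots,(c_m,\ze(z_m))\bigr)$ strictly decreases lexicographically, and termination is immediate by finiteness of $\Ga/K$ and $\CS$. There is no need for a two-phase sort, no need to choose a special linear order on $\Ga/K$ refining the quotient to $A=\Ga/K/[\Ga/K,\Ga/K]$, no ``parity count,'' and no ``residual ambiguity'' to adjust---all of which in your write-up are asserted rather than proved. Your closing appeal to ``the usual decrease of the inversion count in bubble sort'' is exactly the statement that would fail under a perturbed swap; the correct potential is the lexicographic order of the full tuple of pairs, and it only decreases monotonically when the unchanged pair is the one that moves to the smaller index. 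Choose the transposition in that direction and the rest of your machinery becomes unnecessary.
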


\begin{proof}
If $(\ga(x_{i+1}),\ga(y_{i+1})) \prec (\ga(x_{i}),\ga(y_{i}))$ then applying to $Q$
an automorphism:
$$
(x_i \to [x_{i+1},y_{i+1}] x_i [x_{i+1},y_{i+1}]^{-1}, \quad
y_i \to [x_{i+1},y_{i+1}] y_i [x_{i+1},y_{i+1}]^{-1})
$$
swaps $[x_{i},y_{i}]$ and $[x_{i+1},y_{i+1}]$ and, possibly,
changes $\ga(x_{i})$ and $\ga(y_{i})$.
For the new equation, the sequence of pairs
$$
  \bigl((\ga(x_{1}),\ga(y_{1})), \ (\ga(x_{2}),\ga(y_{2})), \ \dots, \ (\ga(x_{g}),\ga(y_{g}))\bigr)
$$
is lexicographically smaller than that for $Q$.
Therefore, after applying a finite sequence of such automorphisms
we get an equation satisfying \eqref{eq:ord1}.

If $(c_{i+1},\ga(z_{i+1})) \prec (c_{i},\ga(z_{i}))$, then applying to $Q$ an automorphism
$$
  (z_i \to z_i \cdot z_{i+1}^{-1} c_{i+1}^{-1} z_{i+1})
$$
swaps $z_{i}^{-1} c_{i}^{-1} z_{i}$ and $z_{i+1}^{-1} c_{i+1}^{-1} z_{i+1}$ and, possibly,
changes $\ga(z_{i})$. For the new equation,
the sequence of pairs
$$
  \bigl( (c_{1},\ga(z_{1})), \ (c_{2},\ga(z_{2})), \ \dots, \ (c_{m},\ga(z_{m})) \bigr)
$$
is lexicographically smaller than that for $Q$. Therefore, a sequence of such
transformations stops in finitely many steps with an equation
satisfying also \eqref{eq:ord2}.
\end{proof}

{\em Step\/} 5.
Denote
$$
  \CB = (\Ga/K\times \Ga/K) \cup (\Ga/K \times \CS).
$$
Note that $\CB$ is finite since both $\Ga/K$ and $\CS$ are finite.
Every ordered equation $(Q=1,\ga)$ in ~$\CE_\CS$
can be encoded as a function $\la_{Q,\ga} \in \MN^\CB$ which associates
\begin{itemize}
    \item
to every pair $(g,h) \in \Ga/K\times \Ga/K$ the number of factors $[x_i,y_i]$ in $Q$
such that $\ga(x_i) = g$ and $\ga(y_i) = h$;
    \item
to every pair $(g,c) \in \Ga/K\times \CS$ the number of factors $z_i^{-1} c_i z_i$ in $Q$
such that $\ga(z_i) = g$ and $c_i = c$.
\end{itemize}
Let $\CP$ be a set of all functions $\la_{Q,\ga}$ encoding equations
$(Q=1,\ga)$ that have solutions. All we need to show is that $\CP$ is recursive.

We fix any set of representatives in $\Ga$ of all elements of $\Ga/K$, so for any
$h \in \Ga/K$ we have $\hat h \in \Ga$ with $\pi_K(\hat h) = h$.
Denote by $\mathit{Order}(g)$ the order of an element $g \in \Ga$
(it is finite since $\Ga$ is a 2-group, see Theorem 17 in \cite[Chapter VIII]{Harpe}).

Let $\CL \seq \MN^\CB$ be the set of all non-negative linear combinations
of the following functions $\mu_{g,h}$ and
$\nu_{g,c}$ where $(g,h)$ and $(g,c)$ run over $\Ga/K\times \Ga/K$ and $\Ga/K \times \CS$
respectively:
$$
  \mu_{g,h} \bigl((g,h)\bigr) = \mathit{Order}([\hat g, \hat h]), \quad \mu(u) = 0
      \quad\text{for all other } u \in \CB
$$
and
$$
  \nu_{g,c} \bigl((g,c)\bigr) = \mathit{Order}(c), \quad \mu(u) = 0
      \quad\text{for all other } u \in \CB.
$$

\begin{lemma} \label{le:P-is-cone}
$\CP + \CL \seq \CP$.
\end{lemma}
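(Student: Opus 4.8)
The plan is to realize the operation ``add a generator of $\CL$ to the encoding'' on the level of equations: given a solvable equation in $\CE_\CS$, we append a block of fresh factors whose product can be forced to the identity by extending the given solution. Since every $f\in\CL$ is a finite sum of the functions $\mu_{g,h}$ and $\nu_{g,c}$ (with non-negative integer multiplicities), it suffices by iteration to treat the two cases $\la+\mu_{g,h}$ and $\la+\nu_{g,c}$ for $\la\in\CP$. Given $\la\in\CP$, by definition there is a solvable equation $(Q=1,\ga)\in\CE_\CS$ with $\la_{Q,\ga}=\la$; fix such a $Q=[x_1,y_1]\cdots[x_g,y_g]\,z_1^{-1}c_1z_1\cdots z_m^{-1}c_mz_m$, with coefficients in $\CS$, together with a solution $\al$ of it.

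For $\mu_{g,h}$, set $k=\mathit{Order}([\hat g,\hat h])$ and let $Q'$ be obtained from $Q$ by inserting $k$ fresh commutator factors $[u_1,v_1]\cdots[u_k,v_k]$ at the end of the commutator part, with constraint $\ga'$ extending $\ga$ by $\ga'(u_j)=g$ and $\ga'(v_j)=h$ for $1\le j\le k$. Then $(Q'=1,\ga')$ is again a standard orientable quadratic equation with coefficients in $\CS$, hence lies in $\CE_\CS$, and its encoding is precisely $\la_{Q,\ga}+\mu_{g,h}$. Extend $\al$ to a $\Ga$-homomorphism $\al'$ by putting $\al'(u_j)=\hat g$ and $\al'(v_j)=\hat h$. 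This respects $\ga'$ because $\pi_K(\hat g)=g$ and $\pi_K(\hat h)=h$, and $\al'(Q')=A\cdot[\hat g,\hat h]^{k}\cdot B=A\cdot B=\al(Q)=1$, where $A$ and $B$ are the $\al$-images of the commutator part and of the coefficient part of $Q$ and $[\hat g,\hat h]^{k}=1$ by the choice of $k$. Thus $\la+\mu_{g,h}$ is the encoding of the solvable equation $(Q'=1,\ga')$ and so lies in $\CP$.

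The case of $\nu_{g,c}$ is entirely analogous: with $k'=\mathit{Order}(c)$, append to $Q$ the fresh coefficient factors $w_1^{-1}c\,w_1\cdots w_{k'}^{-1}c\,w_{k'}$ (the new coefficient $c$ already lies in $\CS$, so we remain in $\CE_\CS$), set $\ga'(w_j)=g$, and extend $\al$ by $\al'(w_j)=\hat g$; the appended block evaluates to $\hat g^{-1}c^{k'}\hat g=1$, so $\al'$ solves $(Q'=1,\ga')$ and $\la+\nu_{g,c}\in\CP$. For a general $f=\sum a_{g,h}\mu_{g,h}+\sum b_{g,c}\nu_{g,c}$ one may instead append all blocks at once — for each $(g,h)$ a run of $a_{g,h}\,\mathit{Order}([\hat g,\hat h])$ commutators and for each $(g,c)$ a run of $b_{g,c}\,\mathit{Order}(c)$ coefficient factors — and extend $\al$ by the corresponding constants; each homogeneous run still contributes $1$, giving $\la+f\in\CP$.

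I do not anticipate a real obstacle. The two points requiring attention are that the enlarged equations stay in $\CE_\CS$ — orientability is clearly preserved, and the only new coefficient, $c$, lies in $\CS$ by the definition of $\nu_{g,c}$ — and the routine verification that an inserted run of commutators $[\hat g,\hat h]$ (resp.\ of conjugates $w^{-1}cw$ all with $\al'(w)=\hat g$) collapses to the identity once it has the length prescribed by $\mathit{Order}([\hat g,\hat h])$ (resp.\ $\mathit{Order}(c)$), which is exactly the role these orders play in the definition of $\CL$.
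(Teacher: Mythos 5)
Your proof is correct and follows essentially the same route as the paper's: reduce to the generating cases $\mu_{g,h}$ and $\nu_{g,c}$, append the prescribed number of fresh commutator (resp.\ coefficient) factors with the indicated constraint, and extend the given solution by the representatives $\hat g,\hat h$ so that the appended block evaluates to the identity by the definition of the orders. You merely spell out the routine verifications (the enlarged equation stays in $\CE_\CS$, the extended map respects $\ga'$, the block collapses to $1$) that the paper states briefly or leaves implicit under ``similar.''
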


\begin{proof}
It is enough to prove that $\CP + \xi \seq \CP$ where $\xi$ is either $\mu_{g,h}$ or
$\nu_{g,c}$.
Let $(Q=1,\ga)$ and $(Q_1=1,\ga_1)$ be two
equations such that $\la_{Q_1,\ga_1} = \la_{Q,\ga} + \mu_{g,h}$. Then $Q_1$ is obtained from ~$Q$ by
inserting (at an appropriate place) the product $[x_1, y_1]\dots [x_r, y_r]$ of
$r=\textit{Order}(\hat g,\hat h)$ commutators $[x_i,y_i]$
and defining the constraint ~$\ga_1$ on the new variables by
$$
  \ga_1(x_1) = \ga_1(x_2) = \dots = \ga_1(x_r) = g \quad\text{and}\quad
  \ga_1(y_1) = \ga_1(y_2) = \dots = \ga_1(y_r) = h.
$$
If $\al$ is a solution of $(Q=1,\ga)$ then we can define a solution $\al_1$ of $(Q_1=1,\ga_1)$
by extending ~$\al$ on the new variables $\set{x_i, y_i}$ by setting $\al_1(x_i) = \hat g$
and $\al_1(y_i) = \hat h$ for all $i$.
The case when $\xi=\nu_{g,c}$ is similar.
\end{proof}

\begin{lemma}\label{le:convex_union}
Let $R$ be a subset of $\MN^n$ such that $R+\MN^n \subseteq R$.
Then there exist finitely many vectors
$v_1,\ldots,v_m \in R$ such that
    $$R = (v_1+\MN^n) \cup \ldots \cup (v_m+\MN^n).$$
\end{lemma}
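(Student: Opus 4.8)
The statement is Dickson's Lemma phrased for upward-closed sets, so the plan is to produce the generators $v_1,\dots,v_m$ as the $\le$-minimal elements of $R$, where $\le$ is the componentwise partial order on $\MN^n$ (so $u \le v$ iff $u_i \le v_i$ for every coordinate $i$). Note that the hypothesis $R + \MN^n \seq R$ says precisely that $R$ is closed upward under $\le$. Thus the whole proof reduces to the classical fact that an arbitrary subset of $\MN^n$ has only finitely many $\le$-minimal elements, together with the observation that, in an upward-closed well-founded poset, every element lies above a minimal one.

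Granting for a moment that $R$ has finitely many minimal elements $v_1,\dots,v_m$, the two inclusions follow easily. Since each $v_i \in R$, upward closure gives $v_i + \MN^n \seq R$, hence $\bigcup_i (v_i + \MN^n) \seq R$. Conversely, for $r \in R$ the set $\setof{r' \in R}{r' \le r}$ is nonempty (it contains $r$) and is contained in the finite box $\set{0,\dots,r_1}\times\cdots\times\set{0,\dots,r_n}$, so it has a $\le$-minimal element $r''$; moreover $r''$ is $\le$-minimal in all of $R$, since any $t \in R$ with $t < r''$ would satisfy $t \le r$ and contradict minimality of $r''$ in that set. Hence $r'' = v_i$ for some $i$ and $r \in v_i + \MN^n$, giving $R \seq \bigcup_i (v_i + \MN^n)$ and the claimed equality.

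It remains to show that any $S \seq \MN^n$ has finitely many $\le$-minimal elements, which I would prove by induction on $n$. For $n=1$ a nonempty $S$ has the single minimal element $\min S$. For the inductive step, assume $S \ne \eset$ and fix $s = (s_1,\dots,s_n) \in S$. If $m$ is a minimal element of $S$ with $s \le m$ then $m = s$ by minimality of $s$; otherwise $m_i < s_i$ for some $i$, so $m$ lies in the slice $S_{i,j} := \setof{t \in S}{t_i = j}$ with $j = m_i < s_i$, and $m$ is $\le$-minimal in $S_{i,j}$ as well. Deleting the $i$-th coordinate identifies $S_{i,j}$ with a subset of $\MN^{n-1}$ preserving $\le$, so by the induction hypothesis $S_{i,j}$ has finitely many minimal elements. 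Therefore the minimal elements of $S$ are contained in the finite set $\set{s} \cup \bigcup_{i=1}^n \bigcup_{j=0}^{s_i-1}\set{\text{minimal elements of }S_{i,j}}$. The argument is entirely classical and presents no genuine obstacle; the only point deserving attention is that in the inductive step one needs only the finitely many slices $S_{i,j}$ with $j < s_i$, which is exactly what keeps the bound finite.
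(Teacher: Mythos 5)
Your proof is correct, and it takes a genuinely different route from the paper's. You reduce the statement to Dickson's Lemma (finiteness of the set of $\le$-minimal elements of any $S\seq\MN^n$) and then prove Dickson's Lemma by the classical slicing argument: fix $s\in S$, observe that every minimal element other than $s$ has some $i$-th coordinate $j<s_i$ and is minimal in the slice $S_{i,j}\simeq$ a subset of $\MN^{n-1}$, and apply the inductive hypothesis to each of the finitely many relevant slices. The paper instead works directly with the generating set: it projects $R$ to $\MN^{n-1}$ by forgetting the last coordinate, applies the inductive hypothesis to $\pi(R)$, lifts the generators back into $R$, observes that anything in $R$ not already covered must have bounded $n$-th coordinate, repeats for the other coordinates, and then absorbs the resulting finite remainder $T$ into the generating set. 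Both are inductions on $n$, but the decompositions differ (slices versus projection-plus-bounded-remainder), and your version is more modular in that it isolates the standard well-quasi-order fact and obtains the canonical minimal generating set, whereas the paper's version is more hands-on and may produce redundant generators; the paper's argument avoids explicitly invoking minimality, which keeps it self-contained but slightly less structured. Either way the result is the same, and there is no gap in your argument.
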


\begin{proof}
We proceed by induction on $n$. For $n=1$ the statement is obvious.
Assume that the lemma is true in dimension $n-1$.
Denote by $\pi: \MN^n \to \MN^{n-1}$ the projection map
$$
  (k_1,\ldots,k_{n-1},k_n) \mapsto (k_1,\ldots,k_{n-1}).
$$
By the inductive assumption, there are finitely many vectors ${\bar v}_1,\ldots,{\bar v}_t \in \pi(R)$
such that
$$
  \pi(R) = ({\bar v}_1 +\MN^{n-1}) \cup ({\bar v}_2 +\MN^{n-1}) \cup\dots\cup ({\bar v}_t +\MN^{n-1}).
$$
Let $v_i \in R$, $i=1,\dots,t$, be any vectors such that ${\bar v}_i = \pi(v_i)$.
Obviously, if
$$
  (k_1,k_2,\ldots,k_n) \in R \setminus \bigcup_i (v_i +\MN^n)
$$
then $k_n < M_n$ where $M_n$ is the maximal $n$-th coordinate of all $v_i$.
Proceeding in a similar way for all other coordinates $i=1,2,\dots,n-1$,
we find finitely many vectors $v_1$, $v_2$, $\dots$, $v_r$ in $R$
such that every vector $(k_1,k_2,\ldots,k_n)$ in the complement
$$
  T = R \setminus \bigcup_i (v_i +\MN^n)
$$
satisfies $k_i < M_i$ for all $i=1,\dots,n$ and hence $T$ is finite. To get the required set $\set{v_i}$,
it remains to add to the set of already chosen ~$v_i$'s all vectors in $T$.
\end{proof}

\begin{proposition} \label{pr:solvable-description}
There exist finitely many functions $v_1,\ldots,v_m \in \MN^\CB$
such that
$$
  \CP = (v_1+ \CL) \cup \ldots \cup (v_m+ \CL)
$$
and therefore, $\CP$ is recursive.
\end{proposition}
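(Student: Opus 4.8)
The plan is to exploit that $\CL$, although not all of $\MN^\CB$, is a product of arithmetic progressions --- one along each coordinate --- and to split $\MN^\CB$ into the finitely many residue classes along which $\CL$ behaves like the full positive orthant; Lemmas \ref{le:P-is-cone} and \ref{le:convex_union} then do the rest.

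First I would record the exact shape of $\CL$. Each generator $\mu_{g,h}$ is supported on the single coordinate $(g,h)\in\Ga/K\times\Ga/K$, where it takes the value $\mathit{Order}([\hat g,\hat h])$, and each $\nu_{g,c}$ is supported on the single coordinate $(g,c)\in\Ga/K\times\CS$, where it takes the value $\mathit{Order}(c)$. Since $\CB$ is the disjoint union of $\Ga/K\times\Ga/K$ (indexing the $\mu$'s) and $\Ga/K\times\CS$ (indexing the $\nu$'s), every $b\in\CB$ is the support of precisely one such generator $\mu_b$; write $d_b=\mu_b(b)$, a positive integer (finite because $\Ga$ is a $2$-group). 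A non-negative combination of the generators is then exactly a function $b\mapsto d_b n_b$ with $n\in\MN^\CB$, so
$$
  \CL=\setof{\mu\in\MN^\CB}{d_b\text{ divides }\mu(b)\text{ for every }b\in\CB},
$$
and for any $v\in\MN^\CB$,
$$
  v+\CL=\setof{\lambda\in\MN^\CB}{\lambda(b)\ge v(b)\text{ and }d_b\mid\lambda(b)-v(b)\text{ for all }b\in\CB}.
$$

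Next I would introduce, for each $r$ in the finite index set $\prod_{b\in\CB}\set{0,1,\dots,d_b-1}$, the affine injection $\iota_r\colon\MN^\CB\to\MN^\CB$ sending $(k_b)_b$ to $(r_b+d_b k_b)_b$. Its image $C_r$ is the residue class of $r$ modulo $(d_b)_b$, and $\MN^\CB$ is the disjoint union of the $C_r$. A one-line computation gives $\iota_r(w+\MN^\CB)=\iota_r(w)+\CL$ for every $w$, so $\iota_r$ transports translation by $\MN^\CB$ to translation by $\CL$. Setting $R_r=\iota_r^{-1}(\CP\cap C_r)\seq\MN^\CB$, one checks, using that adding an element of $\CL$ keeps one inside $C_r$ together with Lemma \ref{le:P-is-cone}, that $R_r+\MN^\CB\seq R_r$. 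Lemma \ref{le:convex_union} then supplies finitely many $w_{r,1},\dots,w_{r,k_r}\in R_r$ with $R_r=\bigcup_j(w_{r,j}+\MN^\CB)$; applying $\iota_r$ yields $\CP\cap C_r=\bigcup_j(\iota_r(w_{r,j})+\CL)$. Taking the union over the finitely many $r$ and relabelling the functions $\iota_r(w_{r,j})$ as $v_1,\dots,v_m$ gives $\CP=(v_1+\CL)\cup\dots\cup(v_m+\CL)$.

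Recursiveness then follows at once: the $d_b$ are orders of explicit elements of $\Ga$, hence computable since the word problem in $\Ga$ is solvable, so once the finitely many $v_i$ are fixed the predicate ``$\lambda\in v_i+\CL$'' is decidable via the displayed description, and $\lambda\in\CP$ iff it holds for some $i$. There is no real obstacle here; the only idea is the residue-class decomposition, and everything else is routine manipulation of the two lemmas already proved. The one point that needs care is that Lemma \ref{le:convex_union} is non-constructive, so this argument establishes the existence of the $v_i$ without exhibiting them; this is harmless, since the claim is only that $\CP$ is recursive, i.e.\ that a deciding algorithm exists, and such an algorithm may simply hard-code the finitely many numbers $v_i(b)$ and $d_b$.
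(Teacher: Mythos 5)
Your proof is correct and follows essentially the same route as the paper: both decompose $\MN^\CB$ into the finitely many residue classes modulo the lattice spanned by the scaled basis vectors (your $\iota_r$ is the paper's ``writing vectors in the basis $\set{n_u\la_u}$''), then intersect with $\CP$, invoke Lemma \ref{le:P-is-cone} to get the invariance needed for Lemma \ref{le:convex_union}, and reassemble. Your write-up is slightly more explicit about the affine transport and about why recursiveness follows despite Lemma \ref{le:convex_union} being non-constructive, but the argument is the same.
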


\begin{proof}
Functions in $\MN^\CB$ may be viewed as vectors whose coordinates are indexed by elements of $\CB$.
For $u \in \CB$, the $u$-th coordinate of a function $\xi \in \MN^\CB$ is $\xi(u)$.
Let $\set{\la_u}_{u\in \CB}$ be the corresponding basis where, by definition, $\la_u(v) = 1$
if $u=v$ and $\la_u(v) = 0$ otherwise.
Then $\MN^\CB$ is the set of all non-negative integer linear combinations of the vectors $\la_u$.
By the definition of $\CL$, it is the set of all non-negative integer linear combinations of
vectors in a set $\set{n_u \la_u}$ for some positive integers $n_u$, $u \in \CB$.
This implies that $\MN^\CB$ can be partitioned into finitely many subsets $\tau + \CL$
(where $\tau$ runs over the corresponding ``parallelepiped'' of vectors whose coordinates ~$k_u$
satisfy $0 \le k_u < n_u$ for each $u$).

By intersecting each $\tau + \CL$ with $\CP$, we partition $\CP$ into
finitely many subsets $\tau + \CP_\tau$ with $\CP_\tau \seq \CL$.
By Lemma \ref{le:P-is-cone}, we have $\CP_\tau + \CL \seq \CP_\tau$ for each $\tau$.
Then we apply Lemma~\ref{le:convex_union} to each ~$\CP_\tau$
(writing vectors in the basis $\set{n_u \la_u}$ instead of $\set{\la_u}$).
This proves the first statement.

The second statement obviously follows from the first.
\end{proof}

\section{Boundness of the commutator width} \label{se:commutator-boundness}

In this section, we apply the technique developed in Sections \ref{se:splitting} and \ref{se:diophantine}
and prove Theorem ~\ref{thm:bounded-width}. Throughout the section, we use the notation:
$$
  R_n = [x_1,y_1] [x_2,y_2] \dots [x_n, y_n]
$$
for a standard coefficient-free orientable quadratic word of genus $n \ge 1$.

In terms of quadratic equations, the
statement of the theorem can be formulated in the following way: there is a number $N$ such that if an equation
$R_nc=1$ is solvable in $\Ga$ and $n> N$ then the equation $R_{n'}c=1$ is solvable in $\Ga$ for some $n'\le N$.
The idea of the proof (described in more detail in Section \ref{se:stability_splitting})
is to apply the splitting operation described in Section \ref{se:splitting} and to show that
it does not depend on the number of commutators in the commutator part of the equation.

%Two difficulties immediately appear. First, successive applications of the splitting homomorphism ~$\Psi$ to a
%word
%of the form $R_nz^{-1}c z$ (this is the standard form of $R_nc$ as defined in Section \ref{se:quadratic})
%may lead to standard quadratic words with an arbitrarily large number of coefficients.
%Second, for correctness of the splitting process we have to deal with equations with constraints modulo $K$.
%In particular, we cannot reduce in an equivalent way the commutator part $R_n$ of a constrained equation
%by a new commutator part $R_{n'}$ with $n'<n$ just removing commutators $[x,y]$ with {\em arbitrary} constraints on
%$x$ and $y$
%(since in general, we then change the value of $R_n$ modulo $K$). A reasonable restriction would be that we
%can remove commutators $[x,y]$ only with constraints $\ga(x)=\ga(y)=1$.
%We formulate and prove an assertion (Proposition \ref{pr:boundness-main-lemma} below)
%which states exactly that one can guess to claim with these considerations:
%if the number of commutators $[x,y]$ with $\ga(x)=\ga(y)=1$ in a constrained equation of the form
%$(R_nS =1, \ga)$ is larger than a fixed number $N$ then we can drop these commutators out
%(reducing to a bounded number)
%and come to an equivalent equation.
%We start with proving that
%all but finitely many (in fact, all but 3) commutators $[x,y]$ in a standard constrained quadratic equation
%in $\Ga$ can be assumed to have constraints $\ga(x)=\ga(y)=1$.

\subsection{Reduced constraints on $R_n$}

The main goal of this subsection is to prove that any constraint $\gamma$ on $R_n$ modulo $K$
can be simplified and turned into some form called the {\em reduced} form.
By $\Stab(R_n)$ we denote the subgroup of all automorphisms
$\al\in\Aut(F_{\Var(R_n)})$ with $\al(R_n) = R_n$.

\begin{lemma}
For any homomorphism $\ga: F_{\Var(R_n)}\to \Z$ there exists
an automorphism $\al\in \Stab(R_n)$ such that:
\begin{gather*}
  \ga\al(x_1) = \gcd\set{\ga(x_1),\dots, \ga(x_n), \ga(y_1),\dots, \ga(y_n)}, \\
  \ga\al(x_i) = 0 \ \text{for } i \ge 2, \quad \ga\al(y_i) = 0 \ \text{for all } i=1,\dots,n.
\end{gather*}
\end{lemma}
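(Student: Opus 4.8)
The plan is to reduce everything to the abelianization. Since $\MZ$ is commutative, $\ga$ factors through the abelianization $H = F/[F,F]$ of $F = F_{\Var(R_n)}$, which is free of rank $2n$ with basis the images of $x_1,\dots,x_n,y_1,\dots,y_n$; thus $\ga$ is encoded by the integer vector $v = (\ga(x_1),\ga(y_1),\dots,\ga(x_n),\ga(y_n))$, and for $\al\in\Stab(R_n)$ the composite $\ga\al$ is encoded by $v\bar\al$, where $\bar\al\in\GL(2n,\MZ)$ is the map induced by $\al$ on $H$. So the assertion amounts to finding $\al\in\Stab(R_n)$ for which $v\bar\al = (d,0,\dots,0)$ with $d = \gcd\set{\ga(x_1),\dots,\ga(x_n),\ga(y_1),\dots,\ga(y_n)}$. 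If $v = 0$ we take $\al = \mathrm{id}$; from now on $v\neq 0$.

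Next I would pin down which matrices occur as $\bar\al$. In $[F,F]/[[F,F],F]\cong\Lambda^2 H$ (the isomorphism sending $[u,w]$ to $\bar u\wedge\bar w$) the relator $R_n$ maps to the standard symplectic bivector $\om = \sum_{i=1}^{n} x_i\wedge y_i$; since $\al(R_n)=R_n$, the induced map on $\Lambda^2 H$ fixes $\om$, so $\bar\al$ lies in the symplectic group $\Sp(2n,\MZ)$ attached to $\om$ (with $\langle x_i,y_i\rangle = 1$ and all other pairings of basis vectors zero). Conversely, every element of $\Sp(2n,\MZ)$ is realized: the elementary symplectic transvections generate $\Sp(2n,\MZ)$, the ones supported on a single handle $(x_i,y_i)$ being induced by the explicit automorphisms $x_i\mapsto y_ix_i$ and $y_i\mapsto x_iy_i$ (which fix $[x_i,y_i]$, hence $R_n$), and the ones mixing two handles being induced by Dehn twists along simple closed curves homologous to $x_i\pm x_j$ on the genus-$n$ surface with one boundary component whose fundamental group is $F$ with boundary word $R_n$. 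Equivalently, one may simply invoke that the mapping class group of that surface, which is exactly $\Stab(R_n)$, surjects onto $\Sp(2n,\MZ)$.

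It then remains to move $v$ to $(d,0,\dots,0)$ within $\Sp(2n,\MZ)$. Writing $v = d\,v'$ with $v'$ a primitive integer vector, this follows from the classical fact that $\Sp(2n,\MZ)$ acts transitively on primitive vectors of $\MZ^{2n}$ (any primitive vector extends to a symplectic basis): choose $A\in\Sp(2n,\MZ)$ with $v'A = (1,0,\dots,0)$, so that $vA = (d,0,\dots,0)$, and lift $A$ to $\al\in\Stab(R_n)$. Alternatively, and more hands-on, one can run a Euclidean algorithm: first use the single-handle automorphisms (which realize all of $\SL_2(\MZ)$ on each pair $(x_i,y_i)$) to bring each block $(\ga(x_i),\ga(y_i))$ to $(d_i,0)$ with $d_i = \gcd(\ga(x_i),\ga(y_i))\ge 0$, and then use the handle-mixing automorphisms, which act on the $x$-coordinates while keeping the already-vanishing $y$-coordinates at zero, to reduce $(d_1,\dots,d_n)$ to $(\gcd(d_1,\dots,d_n),0,\dots,0) = (d,0,\dots,0)$.

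The only nontrivial input is the surjectivity of $\Stab(R_n)$ onto $\Sp(2n,\MZ)$, equivalently the fact that the handle-mixing symplectic moves are realized by honest automorphisms fixing $R_n$; this is where the main work lies, since the single-handle and handle-permuting automorphisms by themselves only generate $\SL_2(\MZ)^n\rtimes S_n$, whose orbits of primitive vectors do not contain $(d,0,\dots,0)$ in general. I would settle this either by quoting the standard description of the automorphisms fixing a surface relator, or by exhibiting one mixing automorphism on $F_4$ explicitly and checking directly that it fixes $[x_1,y_1][x_2,y_2]$.
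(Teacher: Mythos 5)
Your proposal is correct, and its two routes correspond one-to-one with what the paper does: your ``hands-on'' Euclidean reduction is essentially the paper's actual proof, while your primary symplectic argument (pass to $H_1$, observe $\bar\al\in\Sp(2n,\Z)$, invoke transitivity on primitive vectors) is exactly the content of the remark the paper places after the proof, referring to $\Mod^\pm(S_n)\twoheadrightarrow\Sp(2n,\Z)$. The single thing you leave unfilled — and correctly flag as the only nontrivial input — is the existence of a handle-mixing automorphism fixing $R_n$. The paper supplies it explicitly: the composite
$$
  (x_1\mapsto x_2^{-1}x_1 x_2,\ y_1\mapsto x_2^{-1}y_1 x_2)\circ
  (x_1\mapsto x_1 x_2^{-1},\ y_2\mapsto y_2 y_1)\circ
  (x_2\mapsto y_1 x_2 y_1^{-1},\ y_2\mapsto y_1 y_2 y_1^{-1})
$$
fixes $[x_1,y_1][x_2,y_2]$ and acts on the abelianization by $(t_1,0,t_3,0)\mapsto(t_1-t_3,0,t_3,0)$, which together with the single-handle $\SL(2,\Z)$-moves and the block permutation $(x_{i+1}\mapsto x_{i+1}^{[x_i,y_i]},\ y_{i+1}\mapsto y_{i+1}^{[x_i,y_i]})$ realizes $\GL(n,\Z)$ on the $x$-coordinates. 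Note the paper never needs the full surjection onto $\Sp(2n,\Z)$: $\SL(2,\Z)^n$ followed by $\GL(n,\Z)$ on the odd slots is already enough. One small imprecision in your writeup: identifying $\Stab(R_n)$ ``exactly'' with the mapping class group is itself a theorem of Dehn--Nielsen--Baer/Zieschang type, not a tautology, which is another reason the paper prefers the explicit computation over the cited-fact route.
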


\begin{proof}
Let $\bar F$ be the abelian quotient of $F_{\Var(R_n)}$ over the commutator subgroup.
We write elements of $\bar F$ as vectors in the basis $\set{\bar x_1,\bar y_1,\dots,\bar x_n, \bar y_n}$
where $\bar x_i$ and $\bar y_i$ are natural images of ~$x_i$ and $y_i$ in ~$\bar F$.
Any automorphism $\al\in \Aut(F_{\Var(R_n)})$ acts on $\bar F$ as an element of $\GL(2n,\Z)$.

We need to show that any vector $\bar t = (t_1,t_2,\dots,t_{2n}) \in \bar F$
can be transformed by an automorphism in $\Stab(R_n)$
to $(d, 0, \dots, 0)$ where $d = \gcd\set{t_1, t_2, \dots, t_{2n}}$.

The following automorphisms
$$
  (x_i \mapsto y_i x_i), \quad (y_i \mapsto x_i y_i)
$$
generate a subgroup of $\Stab(R_n)$ which acts on each $\Z^2$-block as $\SL(2,\Z)$.
Hence we may assume that $\bar t$ is of the form $(t_1,0,t_3,0,\dots,t_{2n-1},0)$.

The following chain
\begin{alignat*}{2}
  x_1^{-1} y_1^{-1} x_1 y_1 \cdot x_2^{-1} y_2^{-1} x_2 y_2 \quad
  && \xrightarrow{(x_1 \mapsto x_2^{-1} x_1 x_2, \ y_1 \mapsto x_2^{-1} y_1 x_2)} \quad
  & x_2^{-1} \cdot x_1^{-1} y_1^{-1} x_1 \cdot y_1 \cdot y_2^{-1} x_2 y_2 \\
  && \xrightarrow{(x_1 \mapsto x_1 x_2^{-1}, \ y_2 \mapsto y_2 y_1)} \quad
  & x_1^{-1} y_1^{-1} x_1 \cdot x_2^{-1} \cdot y_2^{-1} x_2 y_2 \cdot y_1\\
  && \xrightarrow{(x_2 \mapsto y_1 x_2 y_1^{-1}, \ y_2 \mapsto y_1 y_2 y_1^{-1})} \quad
  & x_1^{-1} y_1^{-1} x_1 y_1 \cdot x_2^{-1} y_2^{-1} x_2 y_2
\end{alignat*}
sends $(t_1, 0, t_3, 0)$ to $(t_1-t_3, 0, t_3, 0)$ and
we can permute two neighboring $\Z^2$-blocks by
$$
  (x_{i+1} \to x_{i+1}^{[x_i,y_i]}, \ y_{i+1} \to y_{i+1}^{[x_i,y_i]})
$$
This easily implies that we can act on the coordinates with odd indices
of vectors of the form $(t_1,0,t_3,0,\dots,t_{2n-1},0)$ as $\GL(n,\Z)$.
\end{proof}

\begin{remark}
The action of $\Stab(R_n)$ on $\Z^{2n}$ is equivalent to the action of extended
mapping class group $\Mod^{\pm}(S_n)$ of the closed surface $S_n$ of genus $n$ on
its homology group $H_1(S_n,\Z)$.
Then the statement of the lemma can be easily seen from the fact that $\Mod(S_n)$ acts on $H_1(S_n,\Z)$
as the symplectic group $\Sp(n,\Z)$, see for example \cite[Theorem 6.4]{Farb_Margalit}.
\end{remark}

\begin{lemma} \label{le:reducing_constraints}
Let $G$ be a polycyclic group of degree $d$.
Then for any homomorphism $\ga: F_{\Var(R_n)}\to G$,
there exists an automorphism $\al\in \Stab(R_n)$ such that
\begin{gather*}
%  \al\ga(x_i) = {\gcd}_i(\varIm \ga), \quad i=1,\dots,d \\
  \al\ga(x_i) = 1 \ \text{for } i >d, \quad \al\ga(y_i) = 1 \ \text{for all } i\ge d.
\end{gather*}
\end{lemma}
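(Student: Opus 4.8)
The plan is to induct on the polycyclic degree $d$, using the preceding lemma (the case $G=\Z$) as the basic building block and peeling off one cyclic quotient of $G$ at a time; concretely I would prove, by induction on $d\ge 0$, that for every group $G$ admitting a subnormal series of length $d$ with cyclic factors, every $n$, and every homomorphism $\ga\colon F_{\Var(R_n)}\to G$, there is $\al\in\Stab(R_n)$ for which the composite $\ga\al=\ga\circ\al$ sends $x_i\mapsto 1$ for $i>d$ and $y_i\mapsto 1$ for $i\ge d$. The automorphisms produced will automatically lie in $\Stab(R_n)$ because they only rearrange or modify the ``handles'' $[x_i,y_i]$ in a way that preserves their product.

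For $d=0$ the group $G$ is trivial, so $\al=\mathrm{id}$ works. For the step I would assume $d\ge 1$ and that the claim holds for every group admitting a subnormal series of length $d-1$ with cyclic factors. Fix such a series $G=N_0\supseteq N_1\supseteq\dots\supseteq N_d=1$ for $G$ (with $N_{i+1}$ normal in $N_i$ and each $N_i/N_{i+1}$ cyclic); then $N_1$ is normal in $G$, the quotient $C=G/N_1$ is cyclic, and $N_1$ itself admits a subnormal series of length $d-1$ with cyclic factors. Let $q\colon G\to C$ be the projection. Given $\ga\colon F_{\Var(R_n)}\to G$, I would first compose with $q$ and lift through $\Z$: since $F_{\Var(R_n)}$ is free, $q\ga$ lifts to some $\ti\ga\colon F_{\Var(R_n)}\to\Z$ (identify $C$ with $\Z$ when $C$ is infinite, and otherwise choose integer preimages of the images of the free generators). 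The preceding lemma then yields $\al_1\in\Stab(R_n)$ with $\ti\ga\al_1(x_i)=0$ for $i\ge 2$ and $\ti\ga\al_1(y_i)=0$ for all $i$; projecting back to $C$ gives $\ga\al_1(x_i)\in N_1$ for $i\ge 2$ and $\ga\al_1(y_i)\in N_1$ for all $i$.

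Now $\ga\al_1$ maps the free subgroup $F'=F(x_2,y_2,\dots,x_n,y_n)$ into $N_1$. Identifying $F'$ with $F_{\Var(R_{n-1})}$ via $x_i\mapsto x_{i-1}$, $y_i\mapsto y_{i-1}$ (so that $[x_2,y_2]\cdots[x_n,y_n]$ becomes $R_{n-1}$), I would apply the induction hypothesis to $\ga\al_1|_{F'}\colon F'\to N_1$ and obtain $\beta\in\Stab(R_{n-1})$ that, after undoing the identification, kills $x_i$ for $d<i\le n$ and $y_i$ for $d\le i\le n$. Extend $\beta$ by the identity on $x_1,y_1$ to $\al_2\in\Aut(F_{\Var(R_n)})$; since $\al_2$ fixes both $[x_1,y_1]$ and $[x_2,y_2]\cdots[x_n,y_n]$, it fixes $R_n=[x_1,y_1]\cdot[x_2,y_2]\cdots[x_n,y_n]$, so $\al_2\in\Stab(R_n)$. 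Put $\al=\al_1\al_2\in\Stab(R_n)$. By construction $\ga\al(x_i)=1$ for $d<i\le n$ and $\ga\al(y_i)=1$ for $d\le i\le n$, while $\ga\al(x_1)=\ga\al_1(x_1)$ and $\ga\al(y_1)=\ga\al_1(y_1)\in N_1$. Since $d\ge 1$, the index $1$ satisfies neither $1>d$ nor, unless $d=1$, $1\ge d$; and when $d=1$ we have $N_1=1$, so $\ga\al(y_1)=1$ automatically. Therefore $\ga\al(x_i)=1$ for every $i>d$ and $\ga\al(y_i)=1$ for every $i\ge d$, which completes the induction; taking $d$ to be the polycyclic degree of $G$ gives the lemma.

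The only substantive input is the $\Z$-case supplied by the preceding lemma; everything else is index bookkeeping together with the one easy observation that an automorphism supported on $x_2,y_2,\dots,x_n,y_n$ stabilising $[x_2,y_2]\cdots[x_n,y_n]$ automatically stabilises $R_n$. So I do not expect a serious obstacle here — the mild nuisance is making the index ranges ($i>d$ for the $x$'s versus $i\ge d$ for the $y$'s) line up through the recursion, which is precisely why one peels off the \emph{top} cyclic quotient, freeing the $y$-side one step ahead of the $x$-side.
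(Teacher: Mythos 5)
Your proof is correct and follows essentially the same route as the paper's: induct on the polycyclic degree, peel off the top cyclic quotient, reduce via a lift to $\Z$ using the preceding lemma, and then recurse on the normal polycyclic subgroup of degree $d-1$ with $R_{n-1}$ in place of $R_n$. The paper starts its induction at the cyclic case $d=1$ rather than at $d=0$ and is terser about lifting to $\Z$ and about why the extended automorphism still stabilizes $R_n$, but the argument is the same.
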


\begin{proof}
We use induction on $d$.
If $G$ is cyclic then
the statement follows from the previous lemma by taking instead of ~$\ga$ any lift $F_{\Var(R_n)} \to \Z$ of
$\ga$.
Assume that $d >1$. Then $G$ has a normal polycyclic subgroup $H$ of degree $d-1$ with a cyclic quotient $G/H$.
By taking the projection $F_{\Var(R_n)} \xrightarrow\ga G \to G/H$ and using the cyclic case we find
$\al\in \Stab(R_n)$ such that $\al\ga(x_i) \in H$ for $i>2$ and $\al\ga(y_i) \in H$ for all ~$i$.
Then we apply the inductive hypothesis with $\al\ga$ instead of $\ga$ and the product
$[x_2,y_2]\dots[x_n,y_n]$ instead of ~$R_n$.
\end{proof}

By Lemma \ref{le:K_split_component}(ii),
$\Ga/K$ is the direct product of cyclic group of order $2$ generated by $bK$
and the dihedral group of order $8$ generated by $aK$ and $dK$.
Hence, $\Ga/K$ is polycyclic of degree $3$ with the subnormal series:
$$
  \Ga/K = G_0 >  G_1 > G_2 > G_3 = 1, \quad G_0/G_1 \simeq G_1/G_2 \simeq \Z/2\Z, \quad G_2 \simeq \Z/4\Z,
$$
where
$G_1 = \gp{K,b,ad}$ and $G_2 = \gp{K,ad}$.
Applying Lemma \ref{le:reducing_constraints} we immediately get

\begin{corollary}[Reducing commutator part] \label{co:reducing-constraints-K}
For any $n\ge 3$ and any homomorphism $\ga :F_{\Var(R_n)} \to \Ga/K$
there is an automorphism $\al\in\Stab(R_n)$
such that all the values $\al\ga(x_i)$ and $\al\ga(y_i)$ are trivial except, possibly, $\al\ga(x_1)$,
$\al\ga(x_2)$, $\al\ga(x_3)$, $\al\ga(y_1)$ and $\al\ga(y_2)$.
\qed
\end{corollary}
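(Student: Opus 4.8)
The plan is to obtain Corollary~\ref{co:reducing-constraints-K} as a direct application of Lemma~\ref{le:reducing_constraints}, once we have recorded that $\Ga/K$ is polycyclic of degree $3$. First I would invoke Lemma~\ref{le:K_split_component}(ii): $\Ga/K$ is the direct product of the order-$2$ group $\gp{bK}$ with the dihedral group of order $8$ generated by $aK$ and $dK$; in particular $\Ga/K$ is a finite $2$-group of order $16$, hence nilpotent and a fortiori polycyclic. To pin down its degree I would exhibit an explicit polycyclic series with cyclic factors, namely $G_0 = \Ga/K \supset G_1 = \gp{K,b,ad} \supset G_2 = \gp{K,ad} \supset G_3 = 1$. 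Here $G_1$ has index $2$ in the $2$-group $G_0$, so $G_1$ is normal in $G_0$ with $G_0/G_1 \cong \Z/2\Z$; likewise $G_2$ has index $2$ in $G_1$, so $G_2$ is normal in $G_1$ with $G_1/G_2 \cong \Z/2\Z$; and $G_2 = \gp{adK}$ is cyclic of order $4$, since $(ad)^4 \in K$ while $(ad)^2 \notin K$ (the latter being visible already in the dihedral factor). Thus $\Ga/K$ is polycyclic of degree $d = 3$, realized by the subnormal series displayed just before the corollary.

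Next I would apply Lemma~\ref{le:reducing_constraints} with $G = \Ga/K$, $d = 3$, and the given homomorphism $\ga : F_{\Var(R_n)} \to \Ga/K$. It yields an automorphism $\al \in \Stab(R_n)$ with $\al\ga(x_i) = 1$ for all $i > 3$ and $\al\ga(y_i) = 1$ for all $i \ge 3$. The hypothesis $n \ge 3$ makes this bookkeeping meaningful: among the free generators $x_1,\dots,x_n,y_1,\dots,y_n$ of $F_{\Var(R_n)}$, the only ones on which $\al\ga$ is not forced to be trivial are $x_1,x_2,x_3,y_1,y_2$, which is exactly the assertion of the corollary. (For $n < 3$ the statement is degenerate and holds vacuously with $\al = \mathrm{id}$.)

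I do not expect any real obstacle here: all the substance already sits in Lemma~\ref{le:reducing_constraints} and, below it, in the two preceding lemmas which realize the $\SL(2,\Z)$- and $\GL(n,\Z)$-actions of $\Stab(R_n)$ on the abelianization $\Z^{2n}$. The only things that genuinely need checking are the two elementary group-theoretic facts used above — that $\Ga/K$ has order $16$ and that $adK$ has order $4$ in it — both immediate from the presentation of $\Ga/K$ in Lemma~\ref{le:K_split_component}(i). If anything deserves a second look, it is that the displayed series really does have \emph{cyclic} factors, so that the hypothesis of Lemma~\ref{le:reducing_constraints} is met literally; this is forced by the two index-$2$ observations together with $G_2 \cong \Z/4\Z$.
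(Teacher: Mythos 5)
Your proposal is correct and follows exactly the paper's own route: observe that $\Ga/K$ is polycyclic of degree $3$ via the same subnormal series $G_0 > G_1 = \gp{K,b,ad} > G_2 = \gp{K,ad} > 1$ with $\Z/2\Z$, $\Z/2\Z$, $\Z/4\Z$ factors, and then apply Lemma~\ref{le:reducing_constraints} with $d=3$.
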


By Corollary \ref{co:reducing-constraints-K},
every constraint $\ga :F_{\Var(R_n)} \to \Ga/K$ is equivalent (with the
equivalence defined as lying in one orbit under the action of $\Stab(R_n)$) to a {\em reduced}
constraint ~$\ga'$ trivial on $\Var(R_n)$ except maybe variables $x_1,x_2,x_3,y_1,y_2$.
Reduced constraints are represented by quintuples of elements of $\Ga/K$;
for $\th = (h_1,h_2,h_3,h_4,h_5) \in (\Ga/K)^5$
by $\ga_{\th,n}$ we denote the constraint $F_{\Var(R_n)} \to \Ga/K$ defined by:
$$
\begin{array}{lll}
\ga_{\th,n}(x_i) = h_i \ \text{for}\ i=1,2,3, &\ \ & \ga_{\th,n}(x_i) = 1 \ \text{for} \ i \ge 4,\\
\ga_{\th,n}(y_i) = h_{i+3} \ \text{for}\ i=1,2, && \ga_{\th,n}(y_i) = 1 \ \text{for} \ i \ge 3.
\end{array}
$$
Fix any total order on a finite set $(\Ga/K)^5$. For $n\in\MN$ define the set of minimal
(relative to the fixed order) representatives of reduced constraints for $R_n$:
$$
\Th_n = \{\theta \in (\Ga/K)^5 \mid \forall \theta'\in (\Ga/K)^5,\ \th'\le\th,\ \ga_{\th,n}\sim\ga_{\th',n}
\Rightarrow \th'=\th \}.
$$
Clearly, $\Th_{n+1} \seq \Th_n \seq (\Ga/K)^5$ for any $n\in\MN$.
Hence, the sequence $\{\Th_i\}_{i=1}^\infty$ eventually stabilizes, i.e.,
there exists $N_0$ such that:
$$
  \Th \overset{\text{def}}= \Th_{N_0} = \Th_{N_0+1} = \Th_{N_0+2} = \dots
$$
For $\ga: F_{\Var(R_n)} \to \Ga/K$ by $\tau(\ga)$ we denote the tuple in $\Th$
representing $\ga$ up to equivalence; so we have $\ga \sim \ga_{\tau(\ga),n}$.

The effect of eventual stabilization of ascending chains of constraints
(referred below as {\em constraint saturation}) plays a key role
in the proof of Theorem \ref{thm:bounded-width}.

\subsection{Stability of splitting}
\label{se:stability_splitting}

In this subsection we describe the general proof strategy for Theorem ~\ref{thm:bounded-width}.
We consider quadratic equations of the form $R_n S =1$ where the left-hand side $R_n S$ is formally divided
into the product $R_n$ of $n$ commutators and
an orientable quadratic word $S$ with $\Var(R_n) \cap \Var(S)=1$
(so if $R_n S$ is standard then $R_n$ does not need to be all of its commutator part).
Constrained equations of this form are written as
$$
  (R_n S =1,\ga,\de)
$$
where ~$\ga$ and ~$\de$ are constraints defined on $\Var(R)$ and $\Var(S)$, respectively.
If $\ga = \ga_{\th,n}$ then the equation is {\em reduced} and we abbreviate it as
$$
  (R_n S =1,\th,\de).
$$

Every quadratic equation $R_nS=1$ in $\Ga$ is equivalent to a disjunction of reduced constrained equations:
\begin{equation}\label{eq:init_constrained_system}
\bigvee_{\substack{\theta\in\Theta,\\\delta\in\Delta}} (R_nS=1,\theta,\delta),
\end{equation}
where $\Delta$ is a set of all possible constraints on $S$.

Now let $(R_n S =1,\th,\de)$ be a standard constrained orientable quadratic equation. Applying a
splitting operation as described in Proposition \ref{pr:constrained-quadratic-splitting}
we obtain an equivalent disjunction of systems of (one or two) standard equations of the same form
$(R_{n'} S' =1,\th',\de')$. (At the moment we assume that an equation $R_{n'} S' =1$ is divided
into two parts $R_{n'}$ and $S'$ in an arbitrary way; the exact procedure will be described in
\ref{se:split_saturation}.)

Thus, applying to \eqref{eq:init_constrained_system}
a finite sequence of splittings we obtain an equivalent disjunction
of systems of quadratic equations of the form
\begin{equation}\label{eq:3}
\CQ = \bigvee_{i} \bigwedge_{j} \rb{R_{n_{i,j}} S_{i,j} = 1,\theta_{i,j},\delta_{i,j} }.
\end{equation}
Two systems of the form \eqref{eq:3},
$$
\bigvee_{i} \bigwedge_{j} \rb{R_{n_{i,j}} S_{i,j} = 1,\theta_{i,j},\delta_{i,j} }
\quad\text{and}\quad
\bigvee_{i} \bigwedge_{j} \rb{R_{k_{i,j}} S_{i,j} = 1,\theta_{i,j},\delta_{i,j} }
$$
which differ only in the genera of their commutator parts $R_{n_{i,j}}$
are called {\em similar}. For a system ~(\ref{eq:3}), define
$$
\rho(\CQ) = \min_{i,j} n_{i,j}.
$$

By $C(\CQ)$ denote the set of coefficients involved in $\CQ$. Recall that in Section ~\ref{se:diophantine}
we introduced a set $\CS$ of ``short'' elements of $\Ga$ which has the property that
after finitely many applications of splittings, the coefficients of any system \eqref{eq:3}
eventually belong to $\CS$ (see Proposition \ref{pr:coefficient-reduction}).

We will prove a fact which is formally more general than Theorem ~\ref{thm:bounded-width}.
(Theorem ~\ref{thm:bounded-width} follows if we take for $\CQ_1$ and $\CQ_2$
the systems \eqref{eq:init_constrained_system} obtained from
equations $R_N = g$ and $R_n = g$, $n > N$, where $g$ is an element of $\Ga$.)

\begin{theorem} \label{thm:equation-stability}
There exists a number $N$ with the following property.
If $\CQ_1$ and $\CQ_2$ are similar systems with $\rho(\CQ_1),\rho(\CQ_2) \ge N$
then $\CQ_1$ is solvable if and only if $\CQ_2$ is solvable.
\end{theorem}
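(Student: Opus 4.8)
The plan is to grind everything down to single equations over the finite coefficient set $\CS$, and then to show that for such an equation the property of being solvable, viewed as a function of the genus of its commutator part, becomes constant once that genus exceeds a bound which does not depend on the equation. First I would \emph{normalise}: given similar systems $\CQ_1\sim\CQ_2$, I apply the splitting operation of Proposition~\ref{pr:constrained-quadratic-splitting} in parallel to both, making at each step the same choices of constraints from the sets $\CV_{\cdot,\cdot}$. Since $\CQ_1$ and $\CQ_2$ agree in everything except the genera of their commutator parts, so do the outputs: the coefficients transform identically (they are the $\psi_i$ of the old ones), the constraints transform identically (via $\om$ and the old constraints), and each genus is transformed by the same affine rule — the identity in the disjoint case and $g\mapsto 2g+\tfrac12\de(Q)-1$ (respectively $g\mapsto 2g+\de(Q)-2$) in the non-disjoint orientable (respectively non-orientable) case by Propositions~\ref{pr:image-joint} and~\ref{pr:image-joint-nonorient}. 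Thus similarity is preserved, and solvability is preserved step by step by Corollary~\ref{co:splitting-reduction}. After each split I re-reduce every commutator-part constraint by an automorphism of $\Stab(R_m)$ as in Corollary~\ref{co:reducing-constraints-K}; this leaves the coefficient part and its constraint untouched, preserves solvability, lands the reduced tuple in the stabilised set $\Th$ as long as the genus is $\ge N_0$, and leaves every commutator unconstrained except those carrying $x_1,x_2,x_3,y_1,y_2$. By Proposition~\ref{pr:coefficient-reduction}, after finitely many steps (their number may depend on the original coefficients, but that is harmless) all coefficients lie in $\CS$; at this point each $\CQ_i$ has become a disjunction of conjunctions of mutually independent standard orientable quadratic equations in $\CE_\CS$ with reduced constraints, the two rewritten systems still differ only in commutator genera, and all those genera have only grown.

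The heart of the argument is then a computation with the cone $\CP$. By Proposition~\ref{pr:solvable-description}, $\CP=\bigcup_{l=1}^{m}(v_l+\CL)$ for finitely many $v_l\in\MN^\CB$; let $V$ bound all their coordinates. Among the generators of the semigroup $\CL$ is $\mu_{1,1}$, and $\mu_{1,1}\big((1,1)\big)=\mathit{Order}([\hat 1,\hat 1])=\mathit{Order}(1)=1$, so $\mu_{1,1}=\la_{(1,1)}$ is simply ``insert one unconstrained commutator''; moreover $\mu_{1,1}$ is the only generator of $\CL$ that affects the $(1,1)$-coordinate. For one of our $\CE_\CS$-equations of commutator genus $n$ with a reduced constraint, the $n-3$ unconstrained commutators force the $(1,1)$-coordinate of $\la_{Q,\ga}$ to be at least $n-3$, while increasing $n$ by one changes $\la_{Q,\ga}$ by exactly $\la_{(1,1)}$. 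Hence, if $n\ge V+3$ and $\la_{Q,\ga}\in\CP$, say $\la_{Q,\ga}-v_l\in\CL$, then the $\mu_{1,1}$-coefficient of $\la_{Q,\ga}-v_l$ is at least $n-3-V\ge n-(V+3)$, so deleting $n-(V+3)$ copies of $\la_{(1,1)}$ keeps it in $\CL$ and shows the genus-$(V+3)$ instance is solvable as well; the reverse implication for $n\ge V+3$ is monotonicity, $\CP+\CL\seq\CP$ (Lemma~\ref{le:P-is-cone}; pad a solution with trivial commutators). Thus solvability of a single $\CE_\CS$-equation with a reduced constraint depends, once the genus is $\ge V+3$, only on the equation's finitary data.

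To conclude, note that the conjuncts of our systems have pairwise disjoint variable sets, so a conjunction is solvable iff each conjunct is and a disjunction iff some disjunct is; hence the solvability of each of the finitely many normalised systems produced from $\CQ_i$ depends, once every commutator genus is $\ge V+3$, only on its finitary data, and that data is the same whether we started from $\CQ_1$ or $\CQ_2$. I would then take $N\ge\max(N_0,V+3)$, enlarged if necessary so that the bounded genus adjustments of Propositions~\ref{pr:image-joint} and~\ref{pr:image-joint-nonorient} never drive a genus below $V+3$ during normalisation, and conclude that $\CQ_1$ is solvable if and only if $\CQ_2$ is.

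I expect the main obstacle to be the computation with $\CP$ and $\CL$: identifying $\la_{(1,1)}$ as the unique ``monotone direction'' inside $\CL$, checking that it is the only generator touching the $(1,1)$-coordinate, and bounding that coordinate from below by $n-3$ through the reduced form of the constraint. A secondary difficulty is verifying that normalisation really does deliver, conjunct by conjunct, standard equations in $\CE_\CS$ with reduced constraints while preserving both similarity and solvability — which is exactly where the stabilisation $\Th_{N_0}=\Th_{N_0+1}=\cdots$ of the constraint sets is needed.
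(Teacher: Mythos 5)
Your overall plan coincides with the paper's two-stage structure (Propositions~\ref{pr:stability_basis} and~\ref{pr:stability_split}), and your second stage is actually a sharper version of the paper's Lemma~\ref{le:reduction-base}: the observation that $\mu_{1,1}=\la_{(1,1)}$ because $\mathit{Order}([\hat 1,\hat 1])=1$, together with subtracting the $(1,1)$-coordinate down to $V+3$, gives a cleaner and more explicit argument than the paper's ``there exist $N_1$ and $M$'' step with the subsequent removal of the divisibility hypothesis. So the base-of-induction half is fine.

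The gap is in the first stage, precisely where you sensed it. The claim that ``the constraints transform identically (via $\om$ and the old constraints)'' and hence that similarity propagates through the split is not justified. When you split $(R_nS=1,\th,\de)$ versus $(R_{n'}S=1,\th,\de)$ with $n'>n$, the disjunction over $\ze\in\CV_{\cdot,\cdot}$ is over constraint maps on \emph{different} variable sets, and the larger-genus equation produces strictly more candidate $\ze$'s; ``making the same choices'' only accounts for the common variables. After re-reducing each branch by $\Stab(R_m)$, what you actually get is a set of reduced tuples --- the paper's $\Phi_n(\La)$ in the disjoint case and $\Psi_n$ in the non-disjoint case --- and it is these sets whose independence of $n$ must be established. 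The $\Th_{N_0}=\Th_{N_0+1}=\cdots$ stabilisation you invoke only fixes the ambient finite set $\Th$ in which these live; it does not by itself show that $\Phi_n(\La)$ or $\Psi_n$ stops growing. The missing step is the ascending-chain argument of Proposition~\ref{pr:stability_split}: since the set $\Pi$ of possible constraints on a fresh commutator block $\{x,y,x',y'\}$ contains the trivial one, inserting an extra unconstrained commutator and assigning it the trivial $\pi$ reproduces every reduced tuple obtained at genus $n$, so $\Phi_n(\La)\seq\Phi_{n+1}(\La)$ (and similarly for $\Psi_n$); being an ascending chain inside the fixed finite set $\Th^2$ (resp.\ $\Th$), it stabilises at some $N_2$ uniform over the finitely many $\La$. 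Without this your normalisation does not deliver systems that ``differ only in commutator genera,'' and the reduction to the cone computation is not legitimate. Adding that monotonicity-and-finiteness argument closes the gap and recovers the paper's proof.
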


The proof of Theorem \ref{thm:equation-stability} uses induction and consists of two major steps.

\begin{proposition}[Base of induction]\label{pr:stability_basis}
There exists a number $N_1$ such that for any two similar systems
$\CQ_1$ and $\CQ_2$ with $\rho(\CQ_1),\rho(\CQ_2) \ge N_1$ and $C(\CQ_i)\subseteq \CS$,
$\CQ_1$ is solvable if and only if $\CQ_2$ is solvable.
\end{proposition}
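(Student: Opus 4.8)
The plan is to reduce the system‑level statement to a statement about a single equation, and then to characterize solvability by a quantity that does not involve~$n$.

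First I would reduce: a disjunction of systems is solvable iff some disjunct is, and a conjunction of equations on pairwise disjoint variable sets is solvable iff each conjunct is. So it suffices to find a uniform $N_1$ such that for every reduced constrained standard orientable equation $(R_nS=1,\ga_{\th,n},\de)$ with coefficients in $\CS$, its solvability is the same for all $n\ge N_1$. Fix such an equation. Since $\ga_{\th,n}$ is a reduced constraint (Corollary~\ref{co:reducing-constraints-K}), in any solution $\al$ we have $\al(x_i),\al(y_i)\in K$ for $i\ge 4$, and as these values range over~$K$ the product $[\al(x_4),\al(y_4)]\cdots[\al(x_n),\al(y_n)]$ ranges over exactly the set $D_{n-3}$ of all products of $n-3$ commutators of elements of~$K$. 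Because $\Var(R_n)$ and $\Var(S)$ are disjoint and the commutators $[x_i,y_i]$ with $i\le 3$ and the word $S$ involve only the remaining variables, $(R_nS=1,\ga_{\th,n},\de)$ is solvable if and only if the set
\[
  W_{S,\th,\de}=\setof{\bigl([\al(x_1),\al(y_1)][\al(x_2),\al(y_2)][\al(x_3),\al(y_3)]\bigr)^{-1}\al(S)^{-1}}{\al\ \text{respects the constraints on}\ \set{x_1,x_2,x_3,y_1,y_2,y_3}\cup\Var(S)}
\]
meets $D_{n-3}$, and $W_{S,\th,\de}$ does not depend on~$n$.

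Since $D_1\seq D_2\seq\cdots$ and $\bigcup_m D_m=[K,K]$, the set $\setof{n}{(R_nS=1,\ga_{\th,n},\de)\ \text{is solvable}}$ is an up‑set, hence equals $[n_0,\infty)$ or~$\eset$; so for each fixed $(S,\th,\de)$ solvability is eventually constant, and the proposition reduces to bounding $n_0$ uniformly over all admissible triples $(S,\th,\de)$. This uniform bound is the main obstacle. It would follow at once from the assertion that $K$ has finite commutator width, i.e.\ $D_C=[K,K]$ for some universal constant~$C$, since then $W_{S,\th,\de}\cap[K,K]\seq D_C$ and one may take $N_1=C+4$. I would establish $D_C=[K,K]$ from the self‑similarity $K\times K\seq\psi(K)$ of Lemma~\ref{le:K_split_component}(iii): regarding an element $w\in[K,K]$ as the coefficient of the equation $R_n\cdot z^{-1}w^{-1}z=1$, the splitting operation of Section~\ref{se:splitting} returns equations of the same shape whose coefficient, after a bounded number of steps, has length $\le 3$ (Proposition~\ref{pr:coefficient-reduction}), whose commutator‑part constraint lies in the finite saturated set $\Th$, and whose genus is a fixed affine function of the input genus (Propositions~\ref{pr:image-joint}, \ref{pr:image-joint-nonorient}). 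Keeping all coefficients inside a fixed finite set under iterated splitting leaves only finitely many ``short'' equations to track, and the eventual periodicity of the genus bookkeeping through the re‑standardization of Lemma~\ref{le:split-computation} then yields a universal $C$. Making this last pumping argument precise — in particular controlling the coefficient set and the genus arithmetic uniformly — is where the real work lies.
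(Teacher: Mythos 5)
Your reduction to a single equation and the observation that the commutators $[x_i,y_i]$ with trivially constrained variables contribute exactly the set $D_{n-3}$ of products of $n-3$ commutators of elements of $K$ is correct, and the resulting conclusion that for each fixed $(S,\th,\de)$ the set of $n$ for which $(R_nS=1,\ga_{\th,n},\de)$ is solvable is an up-set is also right. But you then identify the genuine obstacle yourself: you need the threshold $n_0$ to be bounded uniformly over all admissible $(S,\th,\de)$, and you propose to obtain this from finite commutator width of $K$ (i.e.\ $D_C=[K,K]$ for some universal $C$). This is not proved in the paper, and the sketch you give of how one might prove it --- iterating the splitting operation, tracking the genus bookkeeping, and ``pumping'' --- is essentially the entire inductive machine the paper is in the middle of building for $\Ga$, so your route is either circular or at best re-derives the surrounding sections for the subgroup $K$. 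You acknowledge that ``making this last pumping argument precise\dots is where the real work lies,'' which is an accurate self-assessment: the proposal does not close the gap.

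The paper's proof takes a shorter path that you have available but do not use. It puts both $(R_nS=1,\ga_{\th,n},\de)$ and $(R_{n'}S=1,\ga_{\th,n'},\de)$ into ordered form and observes that their codes in $\MN^\CB$ differ by $m\mu$, where $\mu$ is the basis vector supported on the coordinate $(1,1)\in\Ga/K\times\Ga/K$ and $m=n'-n$. Proposition~\ref{pr:solvable-description} already asserts that the set $\CP$ of codes of solvable equations is a \emph{finite} union of translated cones $v_i+\CL$; this finiteness is exactly the uniform ingredient you were missing, since it means that whether $\la_{Q,\ze}+m\mu$ lies in $\CP$ is eventually constant with a threshold given by the largest coordinate among the finitely many $v_i$, independent of $(S,\th,\de)$. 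In other words, the uniformity has already been packaged into Proposition~\ref{pr:solvable-description} via Lemmas~\ref{le:P-is-cone} and~\ref{le:convex_union} (a Dickson-type argument), and the base case just reads it off; you do not need a commutator-width result for $K$ at all.
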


\begin{proposition}[Stability of splitting]\label{pr:stability_split}
There exists a number $N_2$ such that application of the splitting operation to
similar systems $\CQ_1$ and $\CQ_2$ with $\rho(\CQ_1),\rho(\CQ_2) \ge N_2$
results in similar systems $\CQ_1'$ and $\CQ_2'$ with $\rho(\CQ_i') \ge \rho(\CQ_i)$.
\end{proposition}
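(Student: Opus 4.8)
The plan is to reduce the statement to the analysis of one splitting step applied to a single reduced constrained standard orientable equation $(R_n S=1,\th,\de)$, and to show that, for $n$ large, the equivalent disjunction of systems produced by this step, say $\bigvee_i\bigwedge_j(R_{n'_{i,j}}S'_{i,j}=1,\th'_{i,j},\de'_{i,j})$, has all of its ``non-genus'' data --- the shape of the $\bigvee\bigwedge$, the words $S'_{i,j}$, the tuples $\th'_{i,j}$ and the constraints $\de'_{i,j}$ --- depending only on $(\th,\de,S)$ and not on $n$, while each designated genus satisfies $n'_{i,j}\ge n$. Granting this, if $\CQ_1$ and $\CQ_2$ are similar with $\rho(\CQ_1),\rho(\CQ_2)\ge N_2$ they consist of matching pairs of conjuncts $(R_{n_{i,j}}S_{i,j}=1,\th_{i,j},\de_{i,j})$ and $(R_{k_{i,j}}S_{i,j}=1,\th_{i,j},\de_{i,j})$ with all these genera $\ge N_2$; splitting each matching pair the same way and redistributing $\bigwedge\bigvee$ into $\bigvee\bigwedge$ produces $\CQ_1'$ and $\CQ_2'$ with identical non-genus data and all genera $\ge\rho(\CQ_1)$, resp.\ $\ge\rho(\CQ_2)$, i.e.\ similar systems with $\rho(\CQ_i')\ge\rho(\CQ_i)$.

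First I would separate the \emph{bulk}. Since $\th$ is reduced, the commutators $[x_4,y_4],\dots,[x_n,y_n]$ of $R_n$ all carry the trivial constraint, and $\si_\ga$ is trivial on every commutator, so $\Psi_0(R_n S)=\Psi_0([x_1,y_1][x_2,y_2][x_3,y_3])\cdot[x_{40},y_{40}]\cdots[x_{n0},y_{n0}]\cdot\Psi_0(S)$ and symmetrically for $\Psi_1$, where the $2(n-3)$ commutators $[x_{j0},y_{j0}]$ and $[x_{j1},y_{j1}]$ $(j\ge4)$ involve pairwise distinct descendant variables, none of which lie in $\Var(Q_0)\cap\Var(Q_1)$. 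By Lemma~\ref{le:quadratic_split}(i), whether or not $\Var(Q_0)\cap\Var(Q_1)=\eset$ depends only on $C(S)$, $\th$ and $\de$. In the disjoint case $Q_0=[x_{10},y_{10}]\cdots[x_{n0},y_{n0}]\cdot\Psi_0(S)$, and I would take the division $Q_0=R_n^{(0)}\cdot\Psi_0(S)$ with $n'=n$, the word $\Psi_0(S)$ and its coefficient set $\setof{\psi_0(c)}{c\in C(S)}$ being independent of $n$ (and likewise for $Q_1$). In the non-disjoint case a variable $x\in\Var(Q_0)\cap\Var(Q_1)$ is none of the bulk variables, so the $2(n-3)$ bulk commutators survive intact in $Q_0\#_x Q_1$ and --- since the transformations of Lemma~\ref{le:split-computation} act on a bulk commutator only by simultaneously conjugating both of its letters by a common word, which merely slides it past the rest without interacting with it --- also in the standard form $R$ of $Q_0\#_x Q_1$; by Propositions~\ref{pr:image-joint} and \ref{pr:image-joint-nonorient} the genus of $R$ equals $2n$ plus a constant independent of $n$ and the coefficients of $R$ are $n$-independent, so after moving the $2(n-3)$ bulk commutators to the front by conjugation I would take $R\sim R_{n'}S'$ with $R_{n'}$ consisting of exactly these commutators, giving $n'\ge2(n-3)\ge n$ once $n\ge6$ and $S'$ --- the ``head-derived'' part, of $n$-independent genus and coefficient set --- independent of $n$.

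It then remains to put the split constraint on the designated part $R_{n'}$ into reduced form (via an automorphism of $\Stab(R_{n'})$ extended by the identity on $\Var(S')$, which fixes the equation and leaves $\de'$ unchanged) and to check that the finite set of tuples $\th'$ so obtained, as $\ze$ ranges over $\CV_{R_n S,\ga}$, stabilizes in $n$. The key point is that on the bulk commutators the constraint induced by $\ze$ takes, at each position, a value in a \emph{fixed} finite set $P\subseteq\Ga/K$ --- the first and second coordinates of elements of $\om^{-1}(1)$, possibly conjugated by an $n$-independent element when $R_{n'}$ is moved into place --- while the remaining, finitely many, positions of $R_{n'}$ receive values from a $\th$-dependent but $n$-independent list. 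Since $1\in\Ga$ has $\psi(1)=(1,1)$, Proposition~\ref{pr:psi-via-K} gives $(1,1)\in\om^{-1}(1)$, so $1\in P$; hence any induced constraint on $R_{n'}$ is obtained, up to padding with trivially-constrained commutators, from one on a smaller $R_{n''}$, and extending the corresponding reducing automorphism of $\Stab(R_{n''})$ (Corollary~\ref{co:reducing-constraints-K}) by the identity on the added commutators shows the two have the same reduced representative in $\Th$. Therefore the set of reduced representatives obtained is nondecreasing in $n$ and contained in the fixed finite set $\Th$, so it is eventually constant --- this is exactly the effect of constraint saturation. Taking $N_2$ beyond this stabilization point (and beyond $N_0$ and the thresholds $n-3\ge3$, $2(n-3)\ge n$ used above) finishes the argument, the new constraint $\de'$ on $S'$ being a restriction of $\ze$ transported by Lemma~\ref{le:constraint-transformation} and hence also $n$-independent.

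The main obstacle is the stabilization just described: one must be confident that the $\Stab(R_{n'})$-reduction genuinely absorbs all the freedom that $\CV_{R_n S,\ga}$ injects into the bulk --- that once $n$ is large, adding one more trivially-constrained commutator whose split value may vary over $\om^{-1}(1)$ yields no new reduced tuple. This is the saturation phenomenon behind the stabilization of $\Th_n$, but here it must be verified against the concrete family of constraints coming out of splitting, not against arbitrary constraints. A secondary, purely bookkeeping, obstacle is to fix the ad hoc division of each $R_{n'}S'$ into its two parts uniformly across all branches of the disjunction --- the role of the division procedure of Section~\ref{se:split_saturation} --- so that ``similar in, similar out'' holds on the nose, with the same $S'$, $\th'$ and $\de'$ appearing in both $\CQ_1'$ and $\CQ_2'$.
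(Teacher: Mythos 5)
Your proposal follows essentially the same route as the paper's proof: reduce to a single reduced equation, split it into a ``head'' (the first few commutators, carrying the nontrivial part of $\th$) and a trivially-constrained ``bulk,'' observe that the bulk commutators split cleanly and disjointly regardless of $n$, handle the disjoint and non-disjoint cases separately, and then drive the saturation by noting that the trivial constraint lies in the finite set of possible split constraints on a bulk commutator, so the collection of reduced tuples $\th'$ obtainable is nondecreasing in $n$ and stabilizes inside the finite set $\Th$. The bookkeeping and the key insight ($1\in P$, matching the paper's observation that $\Pi$ contains the trivial constraint on $\set{x,y,x',y'}$) line up with the paper's argument, so this is the same proof in slightly different notation.
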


Let us check that Propositions \ref{pr:stability_basis} and \ref{pr:stability_split}
imply Theorem \ref{thm:equation-stability}. Take $N = \max(N_1,N_2)$.
Let $\CQ_1$ and $\CQ_2$ be two similar systems of the form  \eqref{eq:3} with $\rho(\CQ_i) \ge N$.
By Proposition ~\ref{pr:stability_split} splitting of $\CQ_1$ and $\CQ_2$
results in similar systems $\CQ_1'$ and $\CQ_2'$. Each $\CQ_i'$ is equivalent to $\CQ_i$ and
since $\rho(\CQ_i') \ge N$, we are again
under conditions of Proposition \ref{pr:stability_split}.
Continuing the splitting process
we eventually obtain two similar systems with coefficients in $\CS$
(by Proposition \ref{pr:coefficient-reduction}).
Then by Proposition \ref{pr:stability_basis} one is solvable if and only if the other is solvable.
Q.E.D.

We prove Propositions \ref{pr:stability_basis} and \ref{pr:stability_split}
in subsections \ref{se:split_basis} and \ref{se:split_saturation}, respectively.

\subsection{Base of induction}
\label{se:split_basis}

For the proof of Proposition \ref{pr:stability_basis}, it is enough to consider the case of a single equation:

\begin{lemma} \label{le:reduction-base}
There is a number $N_1$ with the following property.
Assume that $n,n' \ge N_1$ and all coefficients of $S$ have length at most $3$. Then
the equation $(R_n S=1, \th, \de)$ is solvable if and only if the equation
$(R_{n'} S=1, \th, \de)$ is solvable.
\end{lemma}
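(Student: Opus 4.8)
The plan is to fix $S$, $\th$ and $\de$ and to analyse how the solvability of $(R_nS=1,\th,\de)$ depends on $n$ through the machinery of Section~\ref{se:diophantine}. First I would note that, spelled out, $R_nS$ is a standard orientable quadratic word whose coefficients are precisely those of $S$; since these have length at most $3$, they lie in $\CS$, so $(R_nS=1,\th\cup\de)$ is an equation in $\CE_\CS$, where $\th\cup\de$ is the constraint on $\Var(R_n)\cup\Var(S)$ obtained by combining $\th$ and $\de$ (legitimate because the two variable sets are disjoint). Hence, after passing to an ordered form via Lemma~\ref{le:ordering_terms}, the equation is solvable if and only if the corresponding code $\la_n\in\MN^\CB$ lies in the set $\CP$ of Section~\ref{se:diophantine}.

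The key point is that $\la_n$ depends on $n$ almost trivially. Because the constraint $\ga_{\th,n}$ on $\Var(R_n)$ is reduced (trivial on all $x_i,y_i$ except $x_1,x_2,x_3,y_1,y_2$), every commutator $[x_4,y_4],\dots,[x_n,y_n]$ of $R_n$ carries the trivial constraint pair $(1,1)\in\Ga/K\times\Ga/K$. When one passes from $R_{n-1}S$ to $R_nS$ ($n\ge4$) by inserting one such commutator, the factor-sorting automorphisms of Lemma~\ref{le:ordering_terms} move it into the block of $(1,1)$-commutators without affecting anything else: conjugating the trivial coset by any element leaves it trivial, and the conjugations used here are by commutators of trivially constrained variables, hence by elements mapping to $1$ in $\Ga/K$. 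Choosing the ordered forms compatibly, we therefore get $\la_n=\la_{n-1}+\la_{(1,1)}$ for $n\ge4$, where $\la_{(1,1)}$ is the basis function of $\MN^\CB$ supported on the coordinate $(1,1)$, and so
$$
  \la_n=\mathbf d+(n-3)\,\la_{(1,1)}\qquad(n\ge3)
$$
with $\mathbf d=\la_3\in\MN^\CB$ depending on $S$, $\th$, $\de$ but not on $n$.

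Next I would invoke Proposition~\ref{pr:solvable-description}, which gives $\CP=(v_1+\CL)\cup\dots\cup(v_m+\CL)$ for finitely many fixed $v_i\in\MN^\CB$, together with the observation that $\la_{(1,1)}\in\CL$: indeed $\CL$ is spanned by the functions $\mu_{g,h}$, $\nu_{g,c}$, and $\mu_{1,1}=\mathit{Order}([\hat1,\hat1])\cdot\la_{(1,1)}=\mathit{Order}(1)\cdot\la_{(1,1)}=\la_{(1,1)}$; equivalently, in the description $\CL=\{\xi\in\MN^\CB:n_u\mid\xi(u)\text{ for all }u\}$ the modulus $n_{(1,1)}$ equals $1$. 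Writing the membership $\mathbf d+k\,\la_{(1,1)}\in v_i+\CL$ out coordinate by coordinate, the conditions at coordinates $u\ne(1,1)$ do not involve $k$, while at $(1,1)$ the only surviving condition is $k\ge v_i((1,1))-\mathbf d((1,1))$ (divisibility being vacuous). Consequently $\mathbf d+k\,\la_{(1,1)}\in\CP$ holds either for no $k\ge0$ at all, or for every $k\ge V$, where $V:=\max_i v_i((1,1))$ depends only on $\Ga$, $K$ and $\CS$, and in particular not on $S$, $\th$ or $\de$.

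Finally I would set $N_1:=V+3$ (enlarging it if necessary so that $N_1\ge N_0$). For any $n\ge N_1$ one has $n-3\ge V$, so whether $\la_n\in\CP$ is decided purely by the $n$-independent alternative ``some $v_i$ satisfies its coordinate conditions away from $(1,1)$''; the same holds for any $n'\ge N_1$, so $(R_nS=1,\th,\de)$ and $(R_{n'}S=1,\th,\de)$ are either both solvable or both unsolvable, as required. The step I expect to need the most care is the second paragraph: verifying that reduction to ordered form preserves the affine dependence of the code on $n$, i.e.\ that the trivially constrained commutators of $R_n$ are genuinely inert under the sorting automorphisms — which is exactly where reducedness of $\th$ enters. (The non-orientable case, with squares replacing commutators, is handled in the same way.)
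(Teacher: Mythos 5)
Your argument is correct and follows essentially the same route as the paper: pass to the ordered form, observe that the code of the equation is affine in $n$ with increment supported at the coordinate $(1,1)$, and apply Proposition~\ref{pr:solvable-description} to conclude that membership in $\CP$ eventually stabilizes. The only (minor) difference is that you avoid the paper's detour through a divisibility condition by directly noting $\mu_{1,1}=\mathit{Order}([\hat 1,\hat 1])\la_{(1,1)}=\la_{(1,1)}\in\CL$, whereas the paper reaches the same conclusion by a monotonicity argument (substituting trivial values for extra commutators).
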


\begin{proof}
The equation $(R_{n'} S=1, \ga_{\th,n'}, \de)$ is obtained from $(R_n S=1, \ga_{\th,n}, \de)$ by
inserting a word
$$
  W = [x_{n+1}, y_{n+1}] \dots [x_{n'}, y_{n'}]
$$
and extending the constraint by setting
$\ga_{\th,n'}(x_i) = \ga_{\th,n'}(y_i) = 1$ for all $x_i,y_i \in \Var(W)$.

Let $(Q=1,\ze)$ be an ordered form of the equation $(R_n S=1, \ga, \de)$
(see Step 4 in Section ~\ref{se:diophantine}).
As described in the proof of Lemma \ref{le:ordering_terms}, to get this form we
apply automorphisms to $R_n S$ to re-order the commutator and the coefficient parts.
To get an ordered form of $(R_{n'} S=1, \ga_{\th,n'}, \de)$ we can use automorphisms
\begin{align*}
  U x W V &\xrightarrow{(x_i \mapsto x^{-1} x_i x, \ y_i \mapsto x^{-1} y_i x, \ i=n+1,\dots,n')}
  U W x V, \\
  U W x V &\xrightarrow{(x_i \mapsto x x_i x^{-1}, \ y_i \mapsto y_i x^{-1}, \ i=n+1,\dots,n')}
  U x W V
\end{align*}
which can move $W$ at any position in $R_{n'}S$ without changing the constraint on the variables
$x_i,y_i \in \Var(W)$. This easily implies that an ordered form of the equation $(R_{n'} S=1, \ga', \de)$
can be written as $(Q'=1, \ze')$ where $Q'$ is obtained from $Q$ by inserting $W$ at an appropriate
position in $Q$ and extending $\ze$ by defining
$\ze'(x_i) = \ze'(y_i) = 1$ for $x_i,y_i \in \Var(W)$.

Let $\la_{Q,\ze}$ and $\la_{Q',\ze'}$ be corresponding codes defined in Step 5, Section \ref{se:diophantine}.
We see immediately that $\la_{Q',\ze'}$ and $\la_{Q,\ze}$ differ in a single coordinate by $m$, i.e.\
$$
  \la_{Q',\ze'} = \la_{Q,\ze} + m \mu
$$
where $\mu$ is is defined by $\mu((1,1)) = 1$ on $(1,1) \in \Ga/K \times \Ga/K$ and $\mu(u) = 0$ for all other $u
\in \CB$.
Now Proposition \ref{pr:solvable-description} implies that
there exist positive numbers $N_1$ and $M$ such that if $n \ge N_1$ and $m$ is a multiple of $M$ then the
solvability of $(Q=1,\ze)$ is equivalent to the solvability of $(Q'=1,\ze')$.
Since the solvability of $(Q'=1,\ze')$ implies the solvability of the same equation with $n'$ changed to any $n''$
with $n < n'' < n'$ (we can substitute $x_i = y_i = 1$ for any extra commutator $[x_i,y_i]$)
we can drop the condition that $m$ is a multiple of $M$.

Finally, we observe that $N_1$ can be chosen independently on the choice of the equation $(R_n S=1, \ga, \de)$
(we can take $N_1$ as the maximal coordinate of all vectors $v_i$ in Proposition ~\ref{pr:solvable-description}.)
\end{proof}

\subsection{Constraint saturation}
\label{se:split_saturation}

Here we prove Proposition \ref{pr:stability_split}.
It is enough to consider the case when $\CQ_1$ and $\CQ_2$ consist of a single equation.

Fix an arbitrary $S$, a constraint $\delta$ for $S$, a tuple
$\theta \in \Th$ and consider an equation
$$
\CQ^{(n)} = \left (R_3\prod_{i=1}^n[x_i,y_i] \cdot S = 1,\ \theta,\ \delta \right).
$$
Splitting this equation (without subsequent reduction to the standard form)
we obtain an equivalent disjunction
$$
  \CQ^{(n)}_1 = \bigvee_{\substack{\lambda\in\Lambda,\\\pi_1,\ldots,\pi_n,\\\delta'\in\Delta}}
\left( \left\{
\begin{array}{l}
Q_0 \prod_{i=1}^n[x_i,y_i] S_0 = 1, \\[0.5ex]
Q_1 \prod_{i=1}^n[x_i',y_i'] S_1 = 1,
\end{array}
\right.
\ \lambda,\pi_1,\ldots,\pi_n,\delta' \right),
$$
where:
\begin{itemize}
\item
$\Psi(R_3) = (Q_0,Q_1)$ and
$\lambda$ are constraints on $\Var(Q_0)\cup \Var(Q_1)$;
\item
each $\pi_i$ is a constraint on $\{x_i,y_i,x_i',y_i'\}$;
\item
$\Psi(S) = (S_0,S_1)$ and
$\delta'$ are constraints on $\Var(S_0)\cup \Var(S_1)$;
\item
$\Lambda$ and $\Delta$ are sets of constraints which do not depend on $n$;
\item
up to renaming variables,
each $\pi_i$ runs over a fixed set $\Pi$ of constraints on $\set{x,y,x',y'}$.
\end{itemize}

\subsubsection*{Saturation in the disjoint case}
If the two equations in $\CQ^{(n)}_1$ have disjoint sets of variables then
both are in the standard form.
In this case, reducing the set of constraints on $Q_0 \prod_{i=1}^n[x_i,y_i]$
and on $Q_1 \prod_{i=1}^n[x_i',y_i']$ we obtain a new system
$$
  \CQ^{(n)}_2 = \bigvee_{\substack{(\theta_0,\theta_1)\in\Phi_n(\La) ,\\ \delta'\in\Delta}}
\left( \left\{
\begin{array}{l}
Q_0 \prod_{i=1}^n[x_i,y_i] \cdot S_0=1, \\[0.5ex]
Q_1 \prod_{i=1}^n[x_i',y_i'] \cdot S_1=1,
\end{array}
\right.
\ \theta_0,\theta_1,\delta' \right),
$$
where each $\theta_i$ define a constraint on $\Var(Q_i)$,
$\Psi_n(\La) \subseteq \Theta^2$ and all variables $\{x_i,y_i,x_i',y_i'\}$ are trivially constrained.
The set $\Pi$ contains, in particular, the trivial constraint on $\{x,y,x',y'\}$.
This implies $\Psi_n(\La)\subseteq\Psi_{n+1}(\La)$. Since there are finitely many possible choices of ~$\La$,
starting from some $n\ge N_2'$ we get
$\Psi_n(\La)=\Psi_{n+1}(\La)$ for any $n$.
Then systems $\CQ^{(n)}_2$ are similar for different values of $n\ge N_2'$
and thus Proposition \ref{pr:stability_split} holds in this case.

\subsubsection*{Saturation in the non-disjoint case}
If the two equations in $\CQ^{(n)}_1$  have a shared variable,
we need to compute
\begin{equation}\label{eq:2}
\left( Q_0 \prod_{i=1}^n[x_i,y_i] S_0 \right) \: \# \: \left( Q_1 \prod_{i=1}^n[x_i',y_i'] S_1 \right)
\end{equation}
and then take it to the standard form. Up to interchanging the two commutator subsequences,
(\ref{eq:2}) is of the form
$$
U \, \prod_{i=1}^n[x_i,y_i] \: V \, \prod_{i=1}^n[x_i',y_i'] \: W.
$$
Applying $(x_i \mapsto U^{-1}x_i U, \ y_i \mapsto U^{-1}y_i U,
\ x_i' \mapsto (UV)^{-1}x_i' UV, \ y_i' \mapsto (UV)^{-1}y_i' UV)$
we obtain a word
$$
\prod_{i=1}^n[x_i,y_i] \, \prod_{i=1}^n[x_i',y_i'] \: U V W,
$$
which is the same as
$$
\prod_{i=1}^n[x_i,y_i] \, \prod_{i=1}^n[x_i',y_i'] \cdot Q_0S_0 \# Q_1S_1.
$$
Thus, $\CQ^{(n)}_1$  is equivalent to the disjunction
$$
\bigvee_{\substack{\pi_1,\ldots,\pi_n\in\Pi',\\ \lambda\in\Lambda,\\ \delta'\in\Delta}}
  \rb{\prod_{i=1}^n[x_i,y_i] \, \prod_{i=1}^n[x_i',y_i'] \cdot Q_0S_0 \# Q_1S_1 = 1, \
  \pi_1,\ldots,\pi_n,\lambda,\delta'},
$$
where $\Pi'$ is a set of constraints on $\set{x,y,x',y'}$ (and inclusions $\pi_i \in \Pi'$ are assumed up
to renaming variables). Note that $\Pi'$ contains the trivial constraint on $R_2$
since it is obtained from $\Pi$ by an appropriate conjugation of values of variables.

After reduction to the standard form, we obtain a disjunction
$$
  \CQ^{(n)}_3 = \bigvee_{\substack{\ \th' \in \Psi_n,\\ \xi \in \Xi}}
  \rb{R_{2n} S' = 1, \
  \th',\xi},
$$
where $S'$ is the standard form of $Q_0S_0 \# Q_1S_1$, $\th'$ is a constraint on $\Var(R_{2n})$ and
$\xi$ is a constraint on $\Var(S')$ (we do not change constraints on $R_{2n}$ by Remark \ref{rm:no-conjugation}).
The sequence $\set{\Psi_n}$
is ascending and since there are finitely many possible choices of such sequences (determined by the possible
choices
of $\Pi'$), for some $N_2''$ we have stabilization: $\Psi_n = \Psi_{n+1}$ for all $n \ge N_2''$
and any starting equation $\CQ^{(n)}_0$.
Then, again, systems $\CQ^{(n)}_3$ are similar for different values of ~$n$.

Proposition \ref{pr:stability_split} is proved for $N_2 = \max(3,N_2',N_2'')$.
This finishes the proof of Theorem \ref{thm:equation-stability}.

%\bibliography{../../main_bibliography}
\bibliography{main_bibliography}

\providecommand{\bysame}{\leavevmode\hbox to3em{\hrulefill}\thinspace}
\providecommand{\MR}{\relax\ifhmode\unskip\space\fi MR }
% \MRhref is called by the amsart/book/proc definition of \MR.
\providecommand{\MRhref}[2]{%
  \href{http://www.ams.org/mathscinet-getitem?mr=#1}{#2}
}
\providecommand{\href}[2]{#2}
\begin{thebibliography}{10}

\bibitem{Casals-Ruiz_Kazachkov:2011}
M.~{Casals-Ruiz} and I.~{Kazachkov}, \emph{On systems of equations over free
  partially commutative groups}, Mem. Amer. Math. Soc. {212} (2011).

\bibitem{Comerford_Edmunds:1981}
L.~P. {Comerford} and C.~C. {Edmunds}, \emph{{Quadratic equations over free
  groups and free products}}, J. Algebra {68} (1981), pp.~276--297.

\bibitem{Farb_Margalit}
B.~{Farb} and D.~{Margalit}, \emph{{A primer on mapping class groups}}.
  Princeton University Press, 2011.

\bibitem{Grigorchuk:1980}
R.~I. {Grigorchuk}, \emph{{Burnside's problem on periodic groups}}, Funct.
  Anal. Appl. {14} (1980), pp.~41--43.

\bibitem{Grigorchuk_Kurchanov:1992}
R.~I. {Grigorchuk} and P.~F. {Kurchanov}, \emph{On quadratic equations in free
  groups}. Proceedings of the International Conference on Algebra Dedicated to
  the Memory of A.~I.~Malcev, Contemporary Mathematics 131, pp.~159--171.
  American Mathematical Society, 1992.

\bibitem{Harpe}
Pierre de~la {Harpe}, \emph{Topics in geometric group theory}. The University
  of Chicago Press, 2000.

\bibitem{Kharlampovich_Myasnikov:1998(2)}
O.~{Kharlampovich} and A.~{Myasnikov}, \emph{{Irreducible affine varieties over
  a free group. II: Systems in triangular quasi-quadratic form and description
  of residually free groups}}, J. Algebra {200} (1998), pp.~517--570.

\bibitem{Kharlampovich_Myasnikov:2006}
\bysame, \emph{{Elementary theory of free non-abelian groups}}, J. Algebra
  {302} (2006), pp.~451--552.

\bibitem{Lysenok_Miasnikov_Ushakov:2010}
I.~{Lysenok}, A.~G. {Miasnikov}, and A.~{Ushakov}, \emph{{The conjugacy problem
  in the Grigorchuk group is polynomial time decidable}}, Groups Geom. Dyn. {4}
  (2010), pp.~813--833.

\bibitem{Lysenok_Myasnikov:2011}
I.~G. {Lysenok} and A.~G. {Myasnikov}, \emph{{A polynomial bound of solutions
  of quadratic equations in free groups}}, Proc. of the Steklov inst. of math.
  {274} (2011), pp.~148--190.

\bibitem{Makanin:1982}
G.~{Makanin}, \emph{{Equations in a free group}}, Izvestiya AN SSSR, Ser. Mat.
  {46} (1982), pp.~1199--1273. (Russian, English translation in Math USSR
  Izvestiya, \textbf{21}, 3 (1983)).

\bibitem{Razborov:1987}
A.~{Razborov}, \emph{{On systems of equations in free groups}}, Candidate
  dissertation, Steklov mathematical institute, Moscow, 1987, (Russian).

\bibitem{Rozhkov:1998}
A.~V. {Rozhkov}, \emph{{The conjugacy problem in an automorphism group of an
  infinite tree}}, Mat. Zametki {64} (1998), pp.~592--597.

\bibitem{Schupp:1980}
P.~E. {Schupp}, \emph{Quadratic equations in groups, cancellation diagrams on
  compact surfaces, and automorphisms of surface groups}, Word problems, {II}
  ({C}onf. on {D}ecision {P}roblems in {A}lgebra, {O}xford, 1976), Stud. Logic
  Foundations Math.~95, North-Holland, Amsterdam, 1980, pp.~347--371.

\end{thebibliography}

\end{document}